\theoremstyle{plain}
\newtheorem{thrm}{Theorem}[section]
\newtheorem{lemma}[thrm]{Lemma}
\newtheorem{cor}[thrm]{Corollary}
\newtheorem{rmrk}[thrm]{Remark}
\numberwithin{equation}{section}
\begin{document}

\title[A Class of Mixed Type Equations]
{Smooth Solutions to a Class of\\
Mixed Type Monge-Amp\`{e}re Equations}
%\date{\today}
%\keywords{} \subjclass{35M10}
\author[Han]{Qing Han}
\address{Department of Mathematics\\
University of Notre Dame\\
Notre Dame, IN 46556} \email{qhan@nd.edu}

\address{Beijing International Center for Mathematical Research\\
Peking University\\
Beijing, 100871, China} \email{qhan@math.pku.edu.cn}

\author[Khuri]{Marcus Khuri}
\address{Department of Mathematics\\
Stony Brook University\\ Stony Brook, NY 11794}
\email{khuri@math.sunysb.edu}
\thanks{The first author acknowledges the support of NSF
Grant DMS-1105321. The second author acknowledges the support of
NSF Grant DMS-1007156 and a Sloan Research Fellowship.}
%\thanks{The author is supported in part by an
%NSF grant}
\begin{abstract}
We prove the existence of $C^{\infty}$ local solutions to a class of mixed type Monge-Amp\`{e}re equations
in the plane. More precisely, the equation changes type to finite order across two smooth curves intersecting transversely
at a point. Existence of $C^{\infty}$ global solutions to a corresponding class of linear mixed type equations
is also established. These results are motivated by, and may be applied to the problem of prescribed Gaussian curvature
for graphs, the isometric embedding problem for 2-dimensional Riemannian manifolds into Euclidean 3-space, and also
transonic fluid flow.
\end{abstract}

\maketitle

\section{Introduction}

In this paper, we will study a class of
Monge-Amp\`{e}re equations of mixed-type.
One source of interest in these equations arises from the equation of
prescribed Gaussian curvature.
Let $u$ be a
$C^2$ function defined in a domain $\Omega\subset \mathbb R^2$ and
suppose that the graph
of $u$ has Gaussian curvature $K(x)$ at the point $(x, u(x))$,
$x\in\Omega$. It follows
that $u$ satisfies the equation
\begin{equation*}\label{eq-GaussCur}
\det D^2u=K(x)(1+|Du|^2)^{2}.\end{equation*}
This equation is elliptic if $K$ is positive and hyperbolic if $K$ is negative,
and hence is of mixed type when $K$ changes sign.
Another source of Monge-Amp\`{e}re equations comes
from the isometric
embedding problem for 2-dimensional Riemannian manifolds into $\mathbb R^3$. See
chapter three in \cite{Han-Hong2006} for details. In \cite{Lin1986}, Lin
proved the existence of local isometric embeddings of surfaces into
$\mathbb R^3$ if the Gaussian curvature changes sign cleanly. In other
words, the Gaussian curvature changes sign to first order across a curve. In this
case the
Darboux equation, a basic equation associated with the isometric isometric embedding
problem is
elliptic on one side of the curve and hyperbolic on the other.
Such a result was generalized by the first named author in
\cite{Han2005} and \cite{Han200?}. Recently, we \cite{HanKhuri2011} discussed
a case in
which the Gaussian curvature changes sign in a more complicated way
and proved the existence of sufficiently smooth isometric embeddings. For further results
on this and related problems see [4]-[17], [21], and [22].

Mixed type equations also arise naturally in many other areas.
Recently, there have been several survey articles on this subject.
In \cite{Morawetz2004}, Morawetz gives a detailed account of the
historical background and known results on mixed type equations
and transonic flows. In \cite{Otway}, Otway presents a
detailed review on mixed type equations and Riemannian-Lorentzian metrics.
The most intensively studied equation of mixed type is the Tricomi
equation \cite{Tricomi} $$u_{yy}+yu_{xx}=f.$$  The plane is divided into two parts
by the $x$-axis. The Tricomi equation is elliptic in the upper
half plane and hyperbolic in the lower half plane. Many results have
been obtained in various settings for this equation. Nonetheless, beyond the equations
of the Tricomi family,
the theory of mixed type equations is fairly underdeveloped.
However this lack of development is not due to a lack of applications or
well-motivated problems. Mixed type equations which
change type in a way more complicated than that of the Tricomi case also
arise naturally in many circumstances. For instance, as far back as in 1929, Bateman
\cite{Bateman1929} presented several models for the 2-dimensional
motion of compressible fluids. One of these models is given by a
class of elliptic-hyperbolic equations in the unit
disk which change type in the following way. The unit disk is divided
into four regions by two straight lines through the origin.
These equations are elliptic in a pair of opposing regions and
hyperbolic in another pair of opposing regions. (See figure 1 on page 612 in \cite{Bateman1929}.)

%In
%this paper, we focus only on the corresponding linear equations. We
%will pursue the local isometric embedding related to such equations
%elsewhere.

In this paper, we study smooth solutions to a class of mixed type
Monge-Amp\`{e}re equations in the plane
which change type in a way similar to that in \cite{Bateman1929}.
The model equation has the following form
\begin{equation}\label{eq-MA}
u_{xx}u_{yy}-u_{xy}^2=(x^2-y^2)\psi(x, y, u, u_x, u_y), \end{equation}
where $\psi$ is a positive smooth function in
$B_1\times \mathbb R\times\mathbb R^2$. Here $B_1$ is the unit disk in
$\mathbb R^2$. We are interested in the question of whether or not (\ref{eq-MA}) admits a
\textit{smooth} solution $u$, defined in some neighborhood of the origin.
We note that (\ref{eq-MA}) is a Monge-Amp\`{e}re type equation of mixed type.
The unit ball $B_1\subset\mathbb R^2$ is
divided into four components by $\{|x|=|y|\}$. The equation (\ref{eq-MA})
is elliptic in $\{|x|>|y|\}$ and hyperbolic in $\{|x|<|y|\}$.

The following result is a special case of a more general result that we will prove in Section
\ref{Sec-Iterations}.

\begin{thrm}\label{Theorem-Nonlinear} Let $\psi$ be a positive smooth function in
$B_1\times \mathbb R\times\mathbb R^2$.  Then there exists a smooth solution
$u$ of (\ref{eq-MA}) in $B_r$ for some $r\in (0,1)$.
\end{thrm}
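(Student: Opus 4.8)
The plan is to perturb off an explicit polynomial approximate solution and close the problem by a Nash--Moser iteration whose engine is the linear theory for the associated mixed type operator. Since adding an affine function leaves $D^2u$ unchanged, one may normalize $u(0)=0$ and $Du(0)=0$, so that $\psi_0:=\psi(0,0,0,0,0)>0$ is a fixed constant. The homogeneous cubic
$$u_0(x,y)=\frac{\sqrt{\psi_0}}{6}\,x\,(x^2+3y^2),\qquad D^2u_0=\sqrt{\psi_0}\begin{pmatrix}x & y\\ y & x\end{pmatrix},$$
satisfies $\det D^2u_0=\psi_0(x^2-y^2)$ and hence solves \eqref{eq-MA} to leading (second) order at the origin. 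Its linearization of the Monge--Amp\`{e}re operator is $L_0\phi=\sqrt{\psi_0}\,(x\phi_{xx}-2y\phi_{xy}+x\phi_{yy})$, with discriminant $\psi_0(y^2-x^2)$; in the coordinates $p=x+y$, $q=x-y$ (so that $x^2-y^2=pq$) it takes the coupled Tricomi form $L_0=2\sqrt{\psi_0}\,(q\,\phi_{pp}+p\,\phi_{qq})$. Thus $L_0$ is elliptic in $\{|x|>|y|\}$, hyperbolic in $\{|x|<|y|\}$, degenerate to first order along the two lines $\{p=0\}\cup\{q=0\}=\{|x|=|y|\}$, and --- being homogeneous of degree one --- fully degenerate only at the origin where the two lines cross. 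This is exactly the class of linear mixed type operators for which existence and a priori estimates are established in this paper.

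Starting from $u_0$, I would construct, in the standard way, a polynomial $u_N=u_0+O(|(x,y)|^4)$ of degree $N$ with
$$\det D^2u_N-(x^2-y^2)\,\psi(x,y,u_N,\partial_x u_N,\partial_y u_N)=O(|(x,y)|^{N}),$$
the higher-order terms being determined successively by solving linear equations with principal part $L_0$ on spaces of polynomials. Since $u_N=u_0+O(|(x,y)|^4)$, the linearization of \eqref{eq-MA} at $u_N$,
$$L_N\phi:=(u_N)_{yy}\phi_{xx}-2(u_N)_{xy}\phi_{xy}+(u_N)_{xx}\phi_{yy}-(x^2-y^2)\big(\psi_u\,\phi+\psi_p\,\phi_x+\psi_q\,\phi_y\big),$$
equals $L_0$ plus a perturbation that vanishes to higher order at the origin, hence lies in the same mixed type class. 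Writing $u=u_N+w$, the correction $w$ must solve $L_N w=-E_N-Q(w)$, where $E_N=O(|(x,y)|^{N})$ is the residual and $Q$ gathers the terms that are quadratic and of higher order in $(w,\partial w,\partial^2 w)$.

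To manufacture the small parameter required by the iteration, I would rescale by $z\mapsto\lambda z$, $u\mapsto\lambda^{-3}u(\lambda\,\cdot)$; under this, \eqref{eq-MA} keeps its form with $\psi$ replaced by $\psi_\lambda(z,s,r):=\psi(\lambda z,\lambda^3 s,\lambda^2 r)\to\psi_0$, the cubic $u_0$ is invariant, the coefficients of $L_N$ converge to those of $L_0$, and the residual becomes $\lambda^{-2}E_N(\lambda\,\cdot)=O(\lambda^{N-2})$ on the fixed ball $B_1$. Hence, for $N$ large and $\lambda$ small, it suffices to solve $L_N w=-E_N-Q(w)$ in $B_1$ with $E_N$ as small as we please in a low norm. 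I would then invoke the linear theory --- for smooth $f$ and appropriate boundary data, $L_N\phi=f$ admits a smooth solution in $B_1$ satisfying tame estimates in weighted Sobolev norms adapted to the homogeneous degeneracy, with a fixed finite loss of derivatives --- inside a Nash--Moser scheme with smoothing operators; the smallness of $E_N$ forces the iterates to converge to a smooth $w$, and $u=u_N+w$ then solves the rescaled equation in $B_1$, that is, \eqref{eq-MA} in $B_r$ with $r=\lambda$.

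The main obstacle is the linear a priori estimate. Along each of the lines $\{x=\pm y\}$, away from the origin, $L_N$ is of generalized Tricomi type, and there the classical positive-multiplier (Friedrichs--Morawetz) method --- using multipliers built from $\phi$, $x\phi_x$ and $y\phi_y$ together with boundary energy identities --- yields the basic bound; the task is to globalize it by choosing a single multiplier field that makes every bulk and boundary term of a definite sign across the four sectors simultaneously. The genuinely new point is uniformity near the origin, where the two degeneracy curves meet and the whole principal part vanishes: one must work with norms weighted by powers of $|(x,y)|$ matched to the degree-one homogeneity of $u_0$, verify that the multiplier construction persists there, and bound the commutators produced by differentiating the equation to reach the higher-order tame estimates. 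Establishing these estimates --- and confirming that the attendant derivative loss is admissible for a Nash--Moser iteration --- is the heart of the argument.
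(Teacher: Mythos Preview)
Your overall architecture---rescale, linearize, feed a mixed-type linear solver into a Nash--Moser scheme---matches the paper's, but there is a genuine gap in the choice of approximate solution and hence in the linear operator you must invert. Your cubic $u_0$ has $D^2u_0(0)=0$, so the principal part of $L_0=2\sqrt{\psi_0}\,(q\,\partial_{pp}+p\,\partial_{qq})$ vanishes identically at the origin. This is \emph{not} the class treated here: the linear theorem of this paper (Theorem~\ref{TheoremMain}/\ref{TheoremMain-Linear}) concerns equations of the form $u_{yy}+aKu_{xx}+\cdots=f$ with the coefficient of $u_{yy}$ bounded below by a positive constant. The entire strategy---solve a degenerate Dirichlet problem in the elliptic sectors, then propagate by a degenerate Cauchy problem into the hyperbolic sectors---relies on having a uniformly nondegenerate ``time'' direction $\partial_y$. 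For your $L_0$ no such direction exists near the vertex, and none of the estimates of Sections~\ref{Section-Elliptic}--\ref{Section-HyperbolicEstimates} apply. Your own closing paragraph (``the whole principal part vanishes'' and weighted norms ``matched to the degree-one homogeneity'' would be needed) correctly locates the difficulty, but a linear theory for that operator is not supplied in this paper and would be a separate, harder project; so the sentence ``This is exactly the class of linear mixed type operators for which existence and a priori estimates are established in this paper'' is where the argument breaks.

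The paper avoids this by a different ansatz: set $u(\tilde x)=\tfrac12\tilde x_1^{\,2}+\varepsilon^{5}w(\tilde x/\varepsilon^{2})$. The linearization at any $w$ then has $\partial_{22}$-coefficient $1+O(\varepsilon)$, and after a $w$-dependent change of variables (Lemma~\ref{Lemma-i2.2}) it decomposes as
\[
\mathcal F'(w)\rho=\mathcal L(w)\rho+\{\text{terms proportional to }\mathcal F(w)\text{ and }\partial(\mathcal F(w))\},
\]
where $\mathcal L(w)=a_{22}\partial_{y_2y_2}+a_{11}K\partial_{y_1y_1}+\cdots$ with $a_{ii}=1+O(\varepsilon)$. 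Two things are gained at once: $\mathcal L(w)$ fits Theorem~\ref{TheoremMain-Linear} exactly, and its type is determined by $K$ alone, independently of the iterate, so the degeneracy set does not drift during the Nash--Moser iteration; the $\mathcal F(w)$-correction is quadratic and is absorbed into the Nash--Moser error. Your fixed-$L_N$ scheme has no analogue of this decomposition and would require either a new linear theory for a fully degenerate principal part at the vertex or a mechanism like Lemma~\ref{Lemma-i2.2} to restore a nondegenerate direction; neither is provided.
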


We should point out that $x^2-y^2$ can be replaced by any function with a similar
behavior, such as $y^2-x^2$. This is due to the invariance of the Monge-Amp\`{e}re
operator by orthogonal transformations.
%See Section \ref{Sec-Iterations} for details.

In order to prove Theorem \ref{Theorem-Nonlinear}, it is essential to analyze
the corresponding linear equation. It turns out that it suffices to consider
\begin{equation}\label{0.1}u_{yy}+(x^2-y^2)u_{xx}=f.\end{equation} Again, the plane is
divided into four components by $\{|x|=|y|\}$. The equation (\ref{0.1})
is elliptic in $\{|x|>|y|\}$ and hyperbolic in $\{|x|<|y|\}$. The
lines of degeneracy $\{|x|=|y|\}$ are non-characteristic. Moreover,
the boundaries $\partial\{y>|x|\}$ and $\partial\{y<-|x|\}$ are
space-like for the corresponding hyperbolic regions $\{y>|x|\}$ and
$\{y<-|x|\}$, respectively. Hence equation (\ref{0.1}) is considerably more complicated than the
Tricomi equation, however we are still able to establish the following theorem, which is a special
case of a more general result proven in Section \ref{Section-Proof}.

\begin{thrm}\label{Theorem0.1} Let $f$ be a smooth function in
$\bar B_1\subset \mathbb R^2$. Then there exists a smooth solution
$u$ of (\ref{0.1}) in $B_1$.
 Moreover, for any positive
integer $s$, $u$ satisfies
\begin{equation}\label{0.2}\|u\|_{H^s(B_1)}\le
C_s\|f\|_{H^{s+5}(B_1)},\end{equation} where $C_s$ is a positive
constant depending only on $s$.\end{thrm}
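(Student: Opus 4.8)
The plan is to combine a uniform a priori energy estimate, obtained by the Friedrichs multiplier ("abc") method adapted to the geometry of the degeneracy, with an elliptic regularization (vanishing viscosity) argument to produce the solution, and then to upgrade to $C^{\infty}$ by commuting derivatives through the equation while exploiting the algebraic form $x^{2}-y^{2}=(x-y)(x+y)$. Since the estimate \eqref{0.2} presupposes a linear solution map $f\mapsto u$, the first point is to single out a well-posed boundary value problem on $B_{1}$: homogeneous Dirichlet data on the two \emph{elliptic} arcs $\{|x|>|y|\}\cap\partial B_{1}$ together with Cauchy data on the two \emph{hyperbolic} arcs $\{|y|>|x|\}\cap\partial B_{1}$. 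This assignment is consistent precisely because, as observed before the theorem, those hyperbolic arcs are space-like for $\{y>|x|\}$ and $\{y<-|x|\}$, and the degeneracy lines $\{|x|=|y|\}$ are non-characteristic.

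\textbf{The basic estimate.} For $u$ solving $Lu:=u_{yy}+(x^{2}-y^{2})u_{xx}=f$ I would test against a first order multiplier $Mu=a\,u+b\,u_{x}+c\,u_{y}$ with $a,b,c$ smooth on $\bar B_{1}$, and integrate $\int_{B_{1}}(Lu)(Mu)$ by parts. Each term yields a boundary integral over $\partial B_{1}$ plus an interior quadratic form in $(u_{x},u_{y},u)$. One must arrange three things simultaneously: (i) the interior quadratic form is nonnegative, ideally coercive, on all of $B_{1}$ including across $\{|x|=|y|\}$, which ties the derivatives of $a,b,c$ to the coefficient $x^{2}-y^{2}$; (ii) the boundary integrand has a favorable sign on the two hyperbolic arcs, which is exactly where space-likeness is used; (iii) the boundary terms on the elliptic arcs are absorbed by the Dirichlet data. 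Carrying this through gives an inequality of the form $\|u\|_{H^{1}(B_{1})}\le C\|f\|_{L^{2}(B_{1})}$, possibly after allowing a mild weight along $\{|x|=|y|\}$ in intermediate steps. The same computation applied to the adjoint $L^{*}$ with the dual boundary conditions then produces, by a Hahn--Banach/Riesz argument, a weak $L^{2}$ solution of $Lu=f$ (equivalently, one may obtain it as a limit of solutions of $L_{\varepsilon}u_{\varepsilon}:=Lu_{\varepsilon}-\varepsilon(\partial_{x}^{4}+\partial_{y}^{4})u_{\varepsilon}=f$, which is elliptic and solvable for each $\varepsilon>0$, with the multiplier estimate holding uniformly in $\varepsilon$ since the fourth order perturbation enters with a good sign).

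\textbf{Higher regularity and the loss of derivatives.} Having a solution with its basic estimate, I would bootstrap. The point is that $[L,\partial_{x}]=-2x\,\partial_{x}^{2}$ and $[L,\partial_{y}]=2y\,\partial_{x}^{2}$, so commuting with $\partial_{x}$ produces only terms of the form $-2x\,\partial_{x}v$ that are lower order relative to $v=\partial_{x}^{k}u$ and can be absorbed, while any $y$-derivative beyond the first can be recovered \emph{algebraically} from the equation, $u_{yy}=f-(x^{2}-y^{2})u_{xx}$, by a bounded multiplication that costs no derivatives. Thus all higher $x$-derivatives are estimated by repeated (essentially lossless) tangential commutation, all higher $y$-derivatives are then read off from the differentiated equation, and away from $\{|x|=|y|\}$, where $L$ is uniformly elliptic or strictly hyperbolic, standard interior and boundary estimates produce no loss. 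The one place a genuine, \emph{fixed} loss is incurred is in the base $H^{1}$ estimate of the previous paragraph: removing the degenerate weight near $\{|x|=|y|\}$ and matching, at the four corner points $\{|x|=|y|\}\cap\partial B_{1}$, the boundary-regularity analysis on the elliptic arcs with the Cauchy-data analysis on the hyperbolic arcs, costs a bounded number of derivatives; bookkeeping these gives the constant $5$ in \eqref{0.2}. Smoothness of $u$ then follows from \eqref{0.2} for all $s$ together with Sobolev embedding.

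\textbf{Main obstacle.} The crux is the construction of the multiplier $(a,b,c)$ in the basic estimate: unlike the Tricomi equation, where a single affine multiplier suffices, here no one choice is simultaneously coercive in all four sectors and well behaved up to the corner points. One is forced to build $(a,b,c)$ piecewise from the sector geometry — essentially a vector field transverse to the degeneracy lines and compatible with the space-like arcs — and then smooth it across $\{|x|=|y|\}$ while preserving nonnegativity of the interior quadratic form; verifying that this can be done, and that the resulting corner contributions are controlled, is where the real work lies. The secondary difficulty is the corner analysis in the regularity step, which I would handle by a partition of unity subordinate to the four sectors together with a fixed neighborhood of $\{|x|=|y|\}$, reducing everything to the two model situations (degenerate-elliptic and degenerate-hyperbolic across a non-characteristic, first-order type-change curve) already understood from the isometric embedding literature cited above.
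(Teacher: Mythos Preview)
Your proposal takes a genuinely different route from the paper and, in its present form, has a real gap. The paper does \emph{not} pose a single boundary value problem on $B_1$ with mixed Dirichlet/Cauchy data on $\partial B_1$, nor does it attempt a global Friedrichs multiplier. Instead the construction is sequential: first solve the homogeneous Dirichlet problem for the degenerate elliptic equation in each cone $\{|y|<\pm x\}$, with $u=0$ on the \emph{entire} boundary of that cone, including the degenerate rays $\{|x|=|y|\}$ (Theorem~\ref{Theorem4.1}); the angular point in that analysis is the origin, and the surprising fact is that the degeneracy at the vertex \emph{improves} rather than degrades regularity. Second, the traces $u=0$ and $u_y$ on $\{|x|=|y|\}$ serve as Cauchy data on the space-like angular initial curve for the degenerate hyperbolic problem in $\{y>|x|\}$ and $\{y<-|x|\}$ (Theorem~\ref{Thm-existenceDegen}); the compatibility conditions at the vertex hold automatically because the data come from a smooth function. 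No boundary condition whatsoever is imposed on the hyperbolic arcs of $\partial B_1$. The loss of five derivatives is exactly $d+3$ with $d=2$ in the degenerate hyperbolic estimate (\ref{h3}); it arises from absorbing the factor $1/K$ via the finite-order vanishing (\ref{3.8}), not from any corner analysis on $\partial B_1$.

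Your setup runs into trouble on two fronts. First, the hyperbolic arcs of $\partial B_1$ lie in the \emph{future} of their respective regions (the $y$-direction points from the origin into $\{y>|x|\}$), so prescribing Cauchy data there and propagating backward fixes both $u$ and $u_\nu$ on the degenerate rays; this overdetermines the degenerate elliptic problem in the adjacent cone, where only Dirichlet data can be imposed. Second, and as you yourself flag, the heart of your argument is the existence of a multiplier $(a,b,c)$ that is coercive across all four sectors and well behaved at the transition lines; you describe how one might try to build it piecewise and smooth it, but you do not carry this out, and for this geometry there is no off-the-shelf construction to cite. The paper's split into an elliptic Dirichlet problem followed by a hyperbolic Cauchy problem sidesteps both issues: it never needs a global multiplier, and the matching across $\{|x|=|y|\}$ is one-directional rather than a two-sided transmission condition.
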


%Nonlinear version of Theorem \ref{Theorem0.1} related to isometric
%embedding will be studied in another paper.
%We need to point out
%that equations in this paper change type along fixed boundaries.
%This is different from many mixed-typed equations which change type
%along free boundaries. See \cite{Chen-Feldman} for example.
%Nevertheless, it is believed that the method and result in this
%paper will have other applications.
%Consult \cite{Otway} for a review on mixed Riemannian-Lorentizian metrics.

We point out that (\ref{0.1}) is a small perturbation of the linearization for
(\ref{eq-MA}), at a suitably chosen approximate solution.
It should be emphasized that the form of the degenerate coefficient
$x^2-y^2$ plays an important role in the solvability of (\ref{0.1}).
If $x^2-y^2$ is replaced by other quadratic functions, then it may not be
possible to solve the new equation. For instance, the approach and methods used in this paper
do not yield solutions of
\begin{equation}\label{0.6}u_{yy}+(y^2-x^2)u_{xx}=f.\end{equation}
This equation is different from (\ref{0.1}), in that (\ref{0.6}) is elliptic in $\{|x|<|y|\}$
and hyperbolic in $\{|x|>|y|\}$. We note that
the $y$-direction, which may be considered as the time direction,
does not always point into
the hyperbolic regions. In this sense, the linear equation (\ref{0.1})
is more rigid than the nonlinear equation (\ref{eq-MA}).
%It is similar to construct a
%solution in elliptic regions.
%However, such a solution cannot be
%extended to hyperbolic regions since the boundaries
%$\partial\{x>|y|\}$ and $\partial\{x<-|y|\}$ are {\it not}
%space-like for the corresponding hyperbolic regions $\{x>|y|\}$ and
%$\{x<-|y|\}$ respectively.

The proof of Theorem \ref{Theorem0.1} consists of two steps. In the
first step, we construct a smooth solution in the elliptic regions
$\{|y|<x\}$ and $\{|y|<-x\}$. This is achieved by solving the
homogeneous Dirichlet problem. Such a solution then naturally yields
Cauchy data for the hyperbolic regions along the lines of degeneracy.
In the second step, we construct
a smooth solution in the hyperbolic regions $\{y>|x|\}$ and $\{y<-|x|\}$,
by solving the Cauchy problem. The solution constructed in Theorem
\ref{Theorem0.1} vanishes along the degenerate set $\{|x|=|y|\}\cap
B_1$. It is clear from the proof in this paper that one can prescribe
the solution arbitrarily (as a smooth function) on $\{|x|=|y|\}\cap B_1$. A similar
idea was used by Han \cite{Han2007} in the discussion of higher
dimensional Tricomi equations and related Monge-Amp\`{e}re
equations.

The difficulty in solving both the Dirichlet problem in the elliptic
regions and the Cauchy problem in the hyperbolic regions arises from two
distinct aspects of this problem. First, the equation is degenerate on the boundary. Second,
there is an angular point (i.e., the origin) on the boundary of each
domain.

Boundary value problems for (strictly) elliptic differential
equations in domains with angular points have been studied
extensively. The regularity results are in fact not encouraging. Well
known examples of harmonic functions in sector domains demonstrate that
these solutions are not necessarily smooth. Furthermore, in general, solutions of
degenerate elliptic differential equations exhibit worse
regularity than those of (strictly) elliptic differential equations.
Hence, it seems unrealistic to expect, at first glance, that solutions of
the degenerate elliptic equation studied here should
be smooth in domains with angular points. However, it is precisely due to the degeneracy at the
angular points that we are able to prove that the solutions have this high degree of regularity
up to the boundary. The {\it degeneracy} plays an important {\it
positive} role. In fact, we are not aware of any other cases where
degeneracy actually improves the regularity.

In contrast to the extensive studies of elliptic equations in nonsmooth
domains, little is known about the Cauchy problem for hyperbolic
equations when the initial curve is nonsmooth. Our first task here
is to prove that the Cauchy problem is well posed for (strictly)
hyperbolic equations in domains whose initial curves contain angular
points. Compatibility conditions are needed at the angular points in order
to ensure the regularity of solutions. (See Lemma
\ref{Lemma-extension0} for details.) As in the elliptic case, the
{\it degeneracy} along the initial curve surprisingly plays a {\it positive}
role in passing the existence and regularity result from strict
hyperbolicity to degenerate hyperbolicity. In fact, it demonstrates
that any such initial curve is space-like for the hyperbolic
regions. This plays an important role in the proof of the
well-posedness of degenerate hyperbolic equations in domains whose
initial curves have angular points.

%The method in this paper cannot yield the existence of solutions
%of the equation
%\begin{equation}\label{0.7}u_{yy}+xyu_{xx}=f.\end{equation}
%Note that (\ref{0.7}) is elliptic in the first and third quadrant
%and hyperbolic is the second and fourth quadrant. For (\ref{0.7}),
%the $y$-axis is characteristic. In general, (\ref{0.7}) may not
%admit solutions in the elliptic regions.

%We need to point out that many arguments in this paper are
%limited to the case of 2-dimension. Several estimates in \cite {HanKhuri2011}
%are improved to optimal forms.

This paper is organized as follows. In Section
\ref{Section-Elliptic}, we will construct smooth solutions for the
Dirichlet problem in the elliptic regions and derive necessary
estimates. Smooth solutions to the Cauchy problem for uniformly hyperbolic
equations in domains with angular points on the boundary will be established in
Section \ref{Section-HyperbolicExistence}. Estimates
independent of the hyperbolicity constant will then be derived in Section
\ref{Section-HyperbolicEstimates}. In Section
\ref{Section-Proof}, we will state and prove a general theorem of which
Theorem \ref{Theorem0.1} is a special case. Finally in Section
\ref{Sec-Iterations}, we will discuss a class of Monge-Amp\`{e}re type equations
and study the appropriate iterations to
prove a result which generalizes
Theorem \ref{Theorem-Nonlinear}.

\section{Elliptic Regions}\label{Section-Elliptic}

In this section, we will study a class of degenerate elliptic
differential equations in planar domains with angular
singularities. We will construct smooth solutions if the
degeneracy occurs at angular points.

For any $\kappa>0$, let $\mathcal C_\kappa$ be a cone in $\mathbb
R^2$ with vertex at the origin given by
$$\mathcal C_{\kappa}=\{(x,y); 0<|y|<\kappa x\}.$$
Let $\Omega_\kappa$ be a bounded domain in $\mathbb R^2$ such that
$$\Omega_\kappa\cap B_{1}=\mathcal C_{\kappa}\cap B_{1},$$
and
$$\partial\Omega_\kappa\setminus\{0\}\text{ is smooth.}$$ Consider the equation
\begin{equation}\label{4.1}
u_{yy}+Ku_{xx}+b_1u_x+b_2u_y+cu=f\quad\text{in
}\Omega_\kappa,\end{equation} where $K$, $b_i$ and $c$ are smooth
functions in $\bar\Omega_\kappa$. In the following, we assume
\begin{equation}\label{4.2}K> 0\ \text{in }\Omega_\kappa\quad\text{and}\quad K=0\ \text{on }
\partial\Omega_\kappa\cap B_1.\end{equation}
There are two major difficulties in studying (\ref{4.1}). First,
(\ref{4.1}) is degenerate on a portion of the boundary
$\partial\Omega_\kappa\cap B_1$. Second, there is an angular
singularity on the boundary. Usually, solutions of degenerate
elliptic differential equations exhibit a worse regularity than
those of (strictly) elliptic differential equations. It is well
known that solutions of (strictly) elliptic differential equations
in domains with angular singularities are in general not smooth. The
regularity depends on the angle in an essential way; the smaller the
angle, the better the regularity of solutions. However, it is
entirely different for equations which are degenerate at angular
points. In our case, we are able to construct smooth solutions of
(\ref{4.1}). Moreover, we can prove that any solutions of
(\ref{4.1}) are in fact smooth if its Dirichlet value on the
boundary is smooth and satisfies a compatibility condition up to
infinite order at the angular point. The {\it degeneracy} plays an
important {\it positive} role in the proof of the smoothness of
solutions at the angular point.

We will prove the following result.

\begin{thrm}\label{Theorem4.1} Let $K, b_i, c$ and $f$ be smooth
functions in $\bar{\Omega}_\kappa$ satisfying (\ref{4.2}), $c\le 0$
in $\Omega_\kappa$ and
\begin{equation}\label{4.3} |b_1|\le C_b(\sqrt{K}+|\partial_xK|)\quad\text{in
}\Omega_\kappa.\end{equation}  Then (\ref{4.1}) admits a smooth
solution in $\bar {\Omega}_\kappa$ with $u=0$ on
$\partial\Omega_\kappa$. Moreover, for any integer $m\ge 1$, $u$
satisfies
\begin{equation}\label{4.4}\|u\|_{H^m(\Omega_\kappa)}\le
C_m\|f\|_{H^{m+1}(\Omega_\kappa)},\end{equation} where $C_m$ is a
positive constant depending only on $C_b$ and the $C^{m}$-norms of
$K, b_i$ and $c$.\end{thrm}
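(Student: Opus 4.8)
The plan is to obtain the smooth solution of the degenerate Dirichlet problem (\ref{4.1}) as a limit of solutions to a family of strictly elliptic regularizations, together with a priori estimates that are uniform in the regularization parameter and that control all Sobolev norms. First I would introduce, for $\varepsilon > 0$, the perturbed operator $L_\varepsilon u = u_{yy} + (K+\varepsilon)u_{xx} + b_1 u_x + b_2 u_y + cu$, which is uniformly elliptic in $\bar\Omega_\kappa$. Since $c \le 0$, the classical theory (Fredholm alternative plus the maximum principle) yields a unique weak solution $u_\varepsilon \in H^1_0(\Omega_\kappa)$ of $L_\varepsilon u_\varepsilon = f$. The entire difficulty is then shifted to proving the estimate (\ref{4.4}) with a constant independent of $\varepsilon$, after which a weak-compactness and diagonal argument produces the desired limit $u$, and the estimates pass to the limit by lower semicontinuity of the norms.

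The key step is therefore the $\varepsilon$-uniform estimate. I would prove it by an induction on $m$, using the basic energy identity as the base case: multiplying $L_\varepsilon u_\varepsilon = f$ by $u_\varepsilon$ and integrating by parts over $\Omega_\kappa$ gives
\begin{equation*}
\int_{\Omega_\kappa}\!\!\big(u_{\varepsilon,y}^2 + (K+\varepsilon)u_{\varepsilon,x}^2\big)
= -\int_{\Omega_\kappa}\!\! f u_\varepsilon + \int_{\Omega_\kappa}\!\!\big(b_1 u_{\varepsilon,x}+b_2 u_{\varepsilon,y}+cu_\varepsilon\big)u_\varepsilon + \tfrac12\!\int_{\Omega_\kappa}\!\!\big((b_1)_x + (b_2)_y\big)u_\varepsilon^2.
\end{equation*}
The boundary terms vanish because $u_\varepsilon = 0$ on $\partial\Omega_\kappa$; the crucial point is that the first-order term containing $b_1$ is controlled using hypothesis (\ref{4.3}): the piece $\int b_1 u_{\varepsilon,x} u_\varepsilon$ is absorbed into $\int (K+\varepsilon)u_{\varepsilon,x}^2$ and $\int u_\varepsilon^2$ precisely because $|b_1| \le C_b(\sqrt K + |\partial_x K|)$ gives $b_1^2 \lesssim K + |\partial_x K|^2$, and the $|\partial_x K|^2$ part can be handled since $\partial_x K$ vanishes to appropriate order relative to $K$ on the degenerate boundary (this is where the special weighted structure of the hypothesis is used). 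Combined with $c\le 0$, Cauchy–Schwarz and the Poincaré inequality, this yields $\|u_\varepsilon\|_{H^1(\Omega_\kappa)} \le C\|f\|_{L^2(\Omega_\kappa)}$, or rather, to set up the induction, one should work from the start with the weighted energy that tracks the degeneracy near $\partial\Omega_\kappa\cap B_1$.

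For the inductive step I would differentiate the equation $m$ times (in suitably chosen directions adapted to the geometry — tangential derivatives along the smooth part of the boundary being free, while the degenerate-normal direction costs one degree of $K$, hence one extra derivative of $f$, which accounts for the gap between $H^m$ on the left and $H^{m+1}$ on the right in (\ref{4.4})), commute the derivatives through $L_\varepsilon$, and apply the base-case energy estimate to $\partial^\alpha u_\varepsilon$. The commutator $[\partial^\alpha, L_\varepsilon]$ produces terms with one fewer derivative on $u_\varepsilon$ together with derivatives of the coefficients $K, b_i, c$ up to order $m$; all of these are absorbed by the induction hypothesis and the smoothness of the coefficients, yielding a constant $C_m$ depending only on $C_b$ and the $C^m$-norms of the coefficients, with no dependence on $\varepsilon$. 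The main obstacle — and the place where the degeneracy plays its \emph{positive} role — is controlling the highest-order terms near the angular vertex at the origin: there the two smooth boundary arcs meet and the weight $K$ vanishes along both, so one must verify that the weighted multiplier argument does not degenerate and that no boundary contribution at the corner survives. I expect this corner analysis, combined with correctly choosing the weight so that all integrations by parts near $\{0\}$ close up, to be the technical heart of the proof; the strict-ellipticity regularization and the subsequent limit are then routine.
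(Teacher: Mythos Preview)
Your overall architecture --- regularize $K$ to $K+\varepsilon$, solve the uniformly elliptic Dirichlet problem, derive $\varepsilon$-independent $H^m$ estimates, and pass to the limit --- is exactly the paper's strategy, and your treatment of the base energy estimate and of tangential differentiation along the smooth part of $\partial\Omega_\kappa$ corresponds to the paper's Lemma~\ref{Thm3.1} and Corollary~\ref{Cor3.1}. The gap is precisely where you say it is: the corner. What you have written there (``correctly choosing the weight so that all integrations by parts near $\{0\}$ close up'') is a statement of hope, not a mechanism, and the naive approach of differentiating and re-applying the energy identity does \emph{not} close at the vertex, because there is no single tangential direction and the weighted Poincar\'{e}-type inequalities you rely on degenerate.

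The paper's corner argument has several ingredients you are missing. First, one passes to polar coordinates and proves weighted estimates of Kondrat'ev type, with weights $r^{-2(m-i)}$ on $\partial_r^i u$ (Lemmas~\ref{Lemma2.16} and \ref{Lemma2.21}); the point is that after subtracting from $u$ a carefully chosen polynomial of degree $m-1$ that vanishes on both boundary rays (Lemma~\ref{Lemma2.40}), the remainder lies in the weighted space $V^m$ and the weighted energy inequalities do close. Second --- and this is the step that makes everything quantitative and $\varepsilon$-independent --- the paper applies an \emph{anisotropic rescaling} $x=\varepsilon^5 s,\ y=\varepsilon^4 t$ near the origin, which simultaneously shrinks the cone opening by a factor $\varepsilon$ and makes the principal coefficient $a_\delta(0)=\delta/\varepsilon^2$ small when $\delta<\varepsilon^4$; this forces the smallness hypotheses $\kappa^2 a(0)\le\eta_m$ and $\kappa\le\kappa_m$ required by the weighted corner lemmas. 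Without this rescaling trick the Kondrat'ev regularity (Lemma~\ref{Lemma1.2}) only gives $u\in V^m$ for $m$ bounded in terms of the fixed angle, and you cannot reach arbitrary smoothness. Your inductive differentiation scheme, as stated, has no analogue of either the polynomial subtraction or the rescaling, and will not produce (\ref{4.4}) at the vertex.
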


To prove Theorem \ref{Theorem4.1}, we regularize (\ref{4.1}) by
replacing $K$ by $K+\delta$ for any $\delta>0$. Then the new
equation is uniformly elliptic and hence admits a unique solution
$u_\delta\in H^1_0(\Omega_\kappa)$. In order to pass limit as
$\delta\to0$, we need to derive estimates of $u_\delta$ independent
of $\delta$. The condition (\ref{4.3}) is introduced to overcome the
degeneracy of $K$ along $\partial\Omega_\kappa\cap B_1$.

In the following, we consider
\begin{equation}\label{2.1}\mathcal{L}u\equiv
u_{yy}+au_{xx}+b_1u_x+b_2u_y+cu=f\quad\text{in
}\Omega_\kappa,\end{equation} where $a$, $b_i$ and $c$ are smooth
functions in $\bar\Omega_\kappa$. We assume
\begin{equation}\label{3.1a}a_0\le a\le 1\quad\text{in }\Omega_\kappa,\end{equation}
for a positive
constant $a_0\in (0,1)$.

It is obvious that (\ref{2.1}) is uniformly elliptic. Hence
(\ref{2.1}) admits a solution $u\in H^1_0(\Omega_\kappa)$ and
classical results for uniformly elliptic differential equations on
smooth domains apply in any subdomains of $\bar{\Omega}_\kappa$
away from the origin. Specifically, for any $r\in(0,1)$ and any
$k\ge 2$, there holds
$$\|u\|_{H^k(\Omega_\kappa\setminus B_r)}\le
C_{k,r}\|f\|_{H^{k-2}(\Omega_\kappa)},$$ where $C_{k,r}$ is a
positive constant depending on $k$, $r$, $a_0$ and $C^{k-2}$-norms
of $a, b_i$ and $c$. In general, $C_{k,r}\to\infty$ as $r\to0$ or
$a_0\to0$. Therefore, we need to derive an estimate which is
independent of the lower bound of $a$. Moreover, the regularity of
$u$ close to the origin needs special attentions.

We first consider boundary points away from the origin.  We set for
any $\varepsilon>0$
\begin{equation}\label{3.0}
D_\varepsilon=\{(x,y); |x|< 1,\ 0<y<\varepsilon\}\subset \mathbb
R^2.\end{equation} We denote by $\partial^+_hD_\varepsilon$,
$\partial^-_hD_\varepsilon$ and $\partial_vD_\varepsilon$ the
horizontal top, horizontal bottom and vertical boundaries
respectively. By an appropriate transform, a neighborhood of any
point on $\partial\Omega_\kappa\setminus\{0\}$ is changed to
$D_\varepsilon$ for an $\varepsilon>0$. We consider (\ref{2.1}) in
$D_\varepsilon$ and assume \begin{equation}\label{3.3}|b_1|\le
C_b(\sqrt{a}+|\partial_xa|)\quad\text{in
}D_\varepsilon,\end{equation} for some positive constant $C_b$.

Lemma \ref{Thm3.1} and Corollary \ref{Cor3.1} below provide energy
estimates of solutions in narrow domains.

\begin{lemma}\label{Thm3.1}  Suppose $a,b_1,b_2$ and $c$ are smooth functions in
$D_\varepsilon$ satisfying (\ref{3.1a}) and (\ref{3.3}) and $u$ is a
smooth solution of (\ref{2.1}) with $u=0$ on
$\partial_h^-D_\varepsilon$. If
$$\varepsilon(
|c^+|_{L^\infty(D_\varepsilon)}+|a_{xx}|_{L^\infty(D_\varepsilon)}
+|b_{1,x}|_{L^\infty(D_\varepsilon)}
+|b_{2,y}|_{L^\infty(D_\varepsilon)}+1)^{\frac12}<1,$$
then for any cutoff function $\varphi=\varphi(x)$ on $(-1,1)$
\begin{align}\label{3.11}\begin{split}
&\|\varphi u\|_{L^2(D_\varepsilon)}+\|\varphi
u_y\|_{L^2(D_\varepsilon)}+\|\varphi \sqrt{a}
u_x\|_{L^2(D_\varepsilon)} \\
\leq &C_0 \big(
\|u\|_{L^2(\partial^+_hD_\varepsilon)}+\|u_y\|_{L^2(\partial^+_hD_\varepsilon)}
+\|\sqrt{\varphi a}u\|_{L^2(D_\varepsilon)}+\|\varphi f
\|_{L^2(D_\varepsilon)}\big), \end{split}\end{align} where $C_0$
is a positive constant depending only $\varphi$, $a, C_b$ and the
supnorm of $b_2$.
\end{lemma}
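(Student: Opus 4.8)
The estimate is a weighted energy estimate for a uniformly elliptic equation on the thin slab $D_\varepsilon$, in which the smallness of $\varepsilon$ is used to absorb zeroth- and first-order terms. The plan is to use the classical "positive commutator / integration by parts" approach with a vector field that is adapted to the vertical ($y$) direction, since $D_\varepsilon$ is thin in $y$. Concretely, I would multiply the equation $\mathcal{L}u = f$ by $-\varphi^2 u_{yy}$ (or equivalently work with the multiplier $-\partial_y(\varphi^2 u_y)$, exploiting that $\varphi=\varphi(x)$ commutes with $\partial_y$) and integrate over $D_\varepsilon$. The principal term $u_{yy}\cdot(-\varphi^2 u_{yy})$ gives $\|\varphi u_{yy}\|_{L^2}^2$; the term $a u_{xx}\cdot(-\varphi^2 u_{yy})$, after integrating by parts once in $x$ and once in $y$, produces the good term $\|\varphi\sqrt{a}\,u_{xy}\|_{L^2}^2$ up to boundary contributions on $\partial_h^{\pm}D_\varepsilon$ and $\partial_v D_\varepsilon$, plus error terms involving $a_x$, $a_{xx}$ and $\varphi'$ which are first order in $u$. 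One then recovers control of $\|\varphi u_y\|_{L^2}$ and $\|\varphi\sqrt{a}\,u_x\|_{L^2}$ from the second derivatives via the one-dimensional Poincaré inequality in the $y$-variable (using $u=0$ on $\partial_h^- D_\varepsilon$), which is exactly where the factor $\varepsilon$ enters: $\|\varphi u_y\|_{L^2(D_\varepsilon)} \le \varepsilon \|\varphi u_{yy}\|_{L^2(D_\varepsilon)} + (\text{top boundary terms})$, and similarly for $\|\varphi u\|_{L^2}$.

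The second-order error terms $b_1 u_x$, $b_2 u_y$, $cu$ and the lower-order pieces coming from the integration by parts are handled by Cauchy–Schwarz together with the Poincaré inequality in $y$: each such term is bounded by a constant (involving $|c^+|_{L^\infty}$, $|a_{xx}|_{L^\infty}$, $|b_{1,x}|_{L^\infty}$, $|b_{2,y}|_{L^\infty}$) times $\varepsilon$ times the good quadratic terms, so the smallness hypothesis $\varepsilon(|c^+|_{L^\infty}+|a_{xx}|_{L^\infty}+|b_{1,x}|_{L^\infty}+|b_{2,y}|_{L^\infty}+1)^{1/2}<1$ lets one absorb them into the left-hand side. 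The key structural point is that $b_1$ is a first-order coefficient on the \emph{degenerate} direction $x$, so the hypothesis \eqref{3.3}, $|b_1|\le C_b(\sqrt{a}+|\partial_x a|)$, is precisely what is needed: the term $\int \varphi^2 b_1 u_x u_{yy}$ is controlled by $\int \varphi^2(\sqrt a + |a_x|)|u_x||u_{yy}|$, and the factor $\sqrt a$ pairs with the good term $\|\varphi\sqrt a\, u_x\|_{L^2}$ while the $|a_x|$ piece is absorbed after a further integration by parts (or, on the slab, by Poincaré in $y$). The right-hand side $f$ contributes $\|\varphi f\|_{L^2}$ after Cauchy–Schwarz and Poincaré, and the term $\|\sqrt{\varphi a}\,u\|_{L^2}$ on the right is exactly the leftover zeroth-order contribution that cannot be absorbed (it is not multiplied by a derivative and so Poincaré does not help improve it).

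The boundary terms on $\partial_v D_\varepsilon$ are killed by the cutoff $\varphi$ (which is supported away from $\{|x|=1\}$), the terms on $\partial_h^- D_\varepsilon$ vanish because $u=0$ there and hence also $u_x=0$ and $\varphi u_x = 0$ along that face while $u_{yy}$ is free, so one must be slightly careful: the boundary integral at $y=0$ involves $u_y$ and $u_{xy}$-type traces, and $u=0$ on $\partial_h^-D_\varepsilon$ forces the tangential derivative $u_x$ to vanish but \emph{not} $u_y$; however, in the integration by parts that produces $\|\varphi\sqrt a u_{xy}\|^2$, the boundary term at $y=0$ comes with a factor $u_x$ or $\partial_x(\cdots)$ which does vanish, so it drops out. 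The remaining boundary data live on $\partial_h^+ D_\varepsilon$ and give exactly the terms $\|u\|_{L^2(\partial_h^+D_\varepsilon)}$ and $\|u_y\|_{L^2(\partial_h^+D_\varepsilon)}$ on the right-hand side of \eqref{3.11}.

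I expect the main obstacle to be the bookkeeping in the two integrations by parts of the cross term $\int \varphi^2 a\, u_{xx} u_{yy}$: one must integrate by parts carefully to land on $\|\varphi\sqrt a\, u_{xy}\|_{L^2}^2$ (rather than the useless $\int \varphi^2 a\, u_{xx} u_{yy}$ with no sign), keeping precise track of the commutator terms $a_x \varphi^2 u_x u_{yy}$, $a\,(\varphi^2)' u_x u_{yy}$, and the $y$-boundary terms, and then checking that every one of the resulting error terms is either (i) of the form (small constant)$\times\varepsilon\times$(good term), controlled by the smallness hypothesis via Poincaré in $y$, or (ii) exactly one of the four admissible right-hand-side quantities. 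Verifying that \eqref{3.3} suffices to absorb the $b_1 u_x$ contribution — and in particular that the $|\partial_x a|$ part genuinely can be absorbed rather than merely producing another copy of itself — is the delicate point, and is presumably the reason the specific combination $\sqrt a + |\partial_x a|$ appears in the hypothesis rather than something cruder.
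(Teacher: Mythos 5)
Your plan is structurally different from the paper's proof and the difference is fatal to the argument as proposed. The paper proves \eqref{3.11} with the \emph{first-order} multiplier $\varphi^2 u$: one pairs \eqref{2.1} against $\varphi^2 u$, integrates by parts once, and the $y$-boundary term produced is $\int_{y=\varepsilon}\varphi^2 uu_y$, which is exactly controlled by $\|u\|_{L^2(\partial_h^+D_\varepsilon)}\|u_y\|_{L^2(\partial_h^+D_\varepsilon)}$, matching the right-hand side of \eqref{3.11}. Your proposed \emph{second-order} multiplier $\pm\varphi^2 u_{yy}$ overshoots: when you double-integrate $\int\varphi^2 a\,u_{xx}u_{yy}$ by parts to land on $\int\varphi^2 a\,u_{xy}^2$, the unavoidable top boundary term is $\int_{y=\varepsilon}\varphi^2 a\,u_x u_{xy}$ (or $\int_{y=\varepsilon}\varphi^2 a\,u_{xx}u_y$ if you integrate in $y$ first) — second-order traces of $u$ at $y=\varepsilon$ that are not on the allowed list in \eqref{3.11} and cannot be absorbed. (Also note the sign: multiplying by $-\varphi^2 u_{yy}$ makes the principal term $-\|\varphi u_{yy}\|_{L^2}^2$; you need $+\varphi^2 u_{yy}$ to have a coercive left side.) A further obstruction is the descent from $\|\varphi\sqrt a\,u_{xy}\|_{L^2}$ to $\|\varphi\sqrt a\,u_x\|_{L^2}$: Poincaré in $y$ does not commute with the $(x,y)$-dependent weight $\sqrt{a}$, and controlling the ratio $\sqrt{a(x,y)/a(x,t)}$ uniformly in $\inf a$ — which the stated constant $C_0$ must be — is precisely the kind of thing the lemma is designed to avoid. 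The first-order multiplier never faces this, because $\int\varphi^2 a u_x^2$ arises pointwise and is already the good quantity.

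You also mislocate the role of the hypothesis \eqref{3.3}. With the multiplier $\varphi^2 u$, the $b_1$ term enters as $\int\varphi^2 b_1 uu_x=\tfrac12\int\varphi^2 b_1(u^2)_x$; integrating by parts in $x$, the surviving $b_1$ (as opposed to $b_{1,x}$) sits in the coefficient $\varphi\varphi_x(2a_x-b_1)$ of $u^2$. The paper then invokes a Glaeser-type inequality: since $a\ge 0$ on the larger set $D_1\supset D_\varepsilon$ and $a$ is $C^2$, one has $|\partial_x a|\le C_a\sqrt a$ on $\text{supp}\,\varphi\times(-1,1)$, so \eqref{3.3} collapses $\sqrt a+|\partial_x a|$ to a multiple of $\sqrt a$, and Cauchy's inequality yields $\varphi\varphi_x(2a_x-b_1)\le\varphi^2+C_0\varphi a$. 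The "delicacy" you flag — that the $|\partial_x a|$ part might merely reproduce itself — is resolved outright by Glaeser; the specific combination $\sqrt a+|\partial_x a|$ in \eqref{3.3} is carried over from the global hypothesis \eqref{4.3} on $K$, not forced by anything internal to this thin-strip estimate.
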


\begin{proof} For convenience, we set \begin{equation}\label{3.6}
M=|c^+|_{L^\infty(D_\varepsilon)}+|a_{xx}|_{L^\infty(D_\varepsilon)}
+|b_{1,x}|_{L^\infty(D_\varepsilon)}+|b_{2,y}|_{L^\infty(D_\varepsilon)}+1.
\end{equation}
Multiplying (\ref{2.1}) by $\varphi^2 u$ and integrating over
$D_\varepsilon$, we obtain
\begin{align}\label{3.12}\begin{split}\int_{D_\varepsilon}& (\varphi^2 u_y^2+\varphi^2
au_x^2)=\int_{D_\varepsilon}\big(\varphi^2
auu_x-\frac12(\varphi^2a)_xu^2+\frac12\varphi^2b_1u^2\big)_x\\
&+\int_{D_\varepsilon}\big(\varphi^2 uu_y+\frac12\varphi^2
b_2u^2\big)_y
+\int_{D_\varepsilon}\varphi^2\big(c+\frac12a_{xx}-\frac12b_{1,x}
-\frac12b_{2,y}\big)u^2\\
&+\int_{D_\varepsilon}\big((\varphi\varphi_{xx}+\varphi_x^2)a
+\varphi\varphi_x(2a_x-b_1)\big)u^2-\int_{D_\varepsilon} \varphi^2
uf.\end{split}\end{align} We first note that there is no boundary
integral over $\partial_v D_\varepsilon$ since $\varphi=0$ there and
there is no boundary integral on $\partial_h^-D_\varepsilon$ since
$u=0$ there. Next, we note that $\varphi_x^2\le C_\varphi\varphi$ on
$(-1,1)$ for some positive constant $C_\varphi$. Since $a\ge 0$ in
$D_1$, we also have
\begin{equation}\label{3.2}|\partial_xa|\le
C_a\sqrt{a}\quad\text{in supp}\varphi\times(-1,1),\end{equation} for
some positive constant $C_a$ depending only on supp$\varphi$ and the
$C^2$-norm of $a$. Then by (\ref{3.3}), (\ref{3.2}) and the Cauchy
inequality, we have
$$(\varphi\varphi_{xx}+\varphi_x^2)a
+\varphi\varphi_x(2a_x-b_1)\le \varphi^2+C_0\varphi a,$$ where $C_0$
is a positive constant depending only on $\varphi$, $C_a$ and $C_b$.
With (\ref{3.6}), we get
\begin{align*}\int_{D_\varepsilon} (\varphi^2 u_y^2+\varphi^2
au_x^2)\le&
C_0\int_{y=\varepsilon}\varphi^2(u^2+u_y^2)+C_0\int_{D_\varepsilon}\varphi au^2\\
&+ M\int_{D_\varepsilon}\varphi^2u^2 +\int_{D_\varepsilon} \varphi^2
f^2,\end{align*} where $C_0$ is a positive constant depending only
$\varphi$, $C_a, C_b$ and the supnorm of $b_2$. A simple integration
over $y$ yields
$$u^2(x,y)\le
y\int_{0}^\varepsilon u_y^2(x,t)dt,$$ and then
$$\int_{D_\varepsilon}\varphi^2u^2\le
\frac12\varepsilon^2\int_{D_\varepsilon}\varphi^2 u_y^2.$$ By a
simple substitution, we get
\begin{align*}\int_{D_\varepsilon} (\varphi^2 u_y^2+\varphi^2
au_x^2)\le&
C_0\int_{y=\varepsilon}\varphi^2(u^2+u_y^2)+C_0\int_{D_\varepsilon}\varphi au^2\\
&+ \frac12\varepsilon^2M\int_{D_\varepsilon}\varphi^2u_y^2
+\int_{D_\varepsilon} \varphi^2 f^2.\end{align*} With
$\varepsilon\sqrt{M}\le 1$, we then obtain
\begin{equation*}\int_{D_\varepsilon} (\varphi^2 u_y^2+\varphi^2
au_x^2)\le
C_0\big\{\int_{y=\varepsilon}\varphi^2(u^2+u_y^2)+\int_{D_\varepsilon}\varphi
au^2 + \int_{D_\varepsilon} \varphi^2 f^2\big\},\end{equation*} and
hence
\begin{equation*}\int_{D_\varepsilon} (\varphi^2u^2+\varphi^2 u_y^2+\varphi^2
au_x^2)\le
C_0\big\{\int_{y=\varepsilon}\varphi^2(u^2+u_y^2)+\int_{D_\varepsilon}\varphi
au^2 + \int_{D_\varepsilon} \varphi^2 f^2\big\}.\end{equation*} This
implies (\ref{3.11}) easily.
\end{proof}

\begin{cor}\label{Cor3.1}  Suppose $a,b_1,b_2$ and $c$ are smooth functions in
$D_\varepsilon$ satisfying (\ref{3.1a}) and (\ref{3.3}) and $u$ is a
smooth solution of (\ref{2.1}) with $u=0$ on
$\partial_h^-D_\varepsilon$. If for an integer $s\ge 1$,
$$s\varepsilon(
|c^+|_{L^\infty(D_\varepsilon)}+|a_{xx}|_{L^\infty(D_\varepsilon)}
+|b_{1,x}|_{L^\infty(D_\varepsilon)}
+|b_{2,y}|_{L^\infty(D_\varepsilon)}+1)^{\frac12}<1,$$
then for any cutoff function $\varphi=\varphi(x)$ on $(-1,1)$
\begin{equation}\label{3.16}\|\varphi u\|_{H^s(D_\varepsilon)}
\leq C_s \big(
\sum_{k=0}^{s+1}\|D^ku\|_{L^2(\partial_h^+D_\varepsilon)}
+\|u\|_{L^2(D_\varepsilon)}+\| f \|_{H^s(D_\varepsilon)}\big),
\end{equation} where $C_s$ is a positive constant depending
on $\varphi$, $C_b$ and the $C^s$-norms of $a, b_1, b_2$ and $c$.
\end{cor}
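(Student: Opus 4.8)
The plan is to differentiate \eqref{2.1} in the tangential variable $x$ up to $s$ times, apply Lemma \ref{Thm3.1} to each tangential derivative, and then use the equation itself to recover the remaining derivatives. Fix $s\ge 1$ and set $v_j=\partial_x^ju$ for $0\le j\le s$. Applying $\partial_x^j$ to \eqref{2.1} and collecting terms by the Leibniz rule, one finds that $v_j$ solves an equation of the same form
$$v_{j,yy}+av_{j,xx}+b_1^{(j)}v_{j,x}+b_2^{(j)}v_{j,y}+c^{(j)}v_j=f_j\quad\text{in }D_\varepsilon,$$
with the \emph{same} leading coefficient $a$, with $b_1^{(j)}=ja_x+b_1$, $b_2^{(j)}=b_2$, $c^{(j)}=\binom j2 a_{xx}+jb_{1,x}+c$, and with
$$f_j=\partial_x^jf-\sum_{i<j}\big(p_{ij}v_i+q_{ij}\,\partial_yv_i\big),$$
where $p_{ij},q_{ij}$ are smooth functions built from derivatives of $a,b_1,b_2,c$ of order $\le j$. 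Note that no $y$-derivative of order higher than one enters $f_j$, since $\partial_x^j(u_{yy})=v_{j,yy}$ contributes no commutator. Also $v_j=0$ on $\partial_h^-D_\varepsilon$, because $u=0$ there and $\partial_x$ is tangent to $\partial_h^-D_\varepsilon$.

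First I would check that the hypotheses of Lemma \ref{Thm3.1} survive this differentiation. Since $a$ is unchanged, \eqref{3.1a} holds for the $v_j$-equation, and since $|\partial_xa|\le C_a\sqrt a$ on $\operatorname{supp}\varphi\times(-1,1)$ by \eqref{3.2}, the coefficient $b_1^{(j)}=ja_x+b_1$ still obeys \eqref{3.3} with constant $C_b+j$. If $M_j$ denotes the quantity \eqref{3.6} associated with the $v_j$-equation, then $|(c^{(j)})^+|\le\binom j2|a_{xx}|+j|b_{1,x}|+|c^+|$ and $|b^{(j)}_{1,x}|\le j|a_{xx}|+|b_{1,x}|$ give $M_j\le Cj^2M$ with $C$ absolute. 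Hence the assumption $s\varepsilon M^{1/2}<1$ keeps each $v_j$, $0\le j\le s$, within the scope of Lemma \ref{Thm3.1}; this is precisely the purpose of the factor $s$ in the statement.

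Next I would run an induction on $j$. Choose cutoffs $\varphi=\varphi^{(s)},\dots,\varphi^{(0)}$ on $(-1,1)$ with $\varphi^{(j-1)}\equiv 1$ on the support of $\varphi^{(j)}$, and apply Lemma \ref{Thm3.1} to $v_j$ with cutoff $\varphi^{(j)}$. The base case $j=0$ is Lemma \ref{Thm3.1} for $u$, using $\|\sqrt{\varphi^{(0)}a}\,u\|_{L^2}\le\|a\|_{L^\infty}^{1/2}\|u\|_{L^2}$. In the inductive step, $\|\sqrt{\varphi^{(j)}a}\,v_j\|_{L^2}=\|\sqrt{\varphi^{(j)}}\,\sqrt a\,\partial_xv_{j-1}\|_{L^2}\le\|\varphi^{(j-1)}\sqrt a\,\partial_xv_{j-1}\|_{L^2}$, which is controlled at the previous step, while $\|\varphi^{(j)}f_j\|_{L^2}$ is bounded by $\|\partial_x^jf\|_{L^2}$ plus the already controlled quantities $\|\varphi^{(j-1)}v_i\|_{L^2}$ and $\|\varphi^{(j-1)}\partial_yv_i\|_{L^2}$, $i<j$; the boundary terms contribute $\|\partial_x^ju\|_{L^2(\partial_h^+D_\varepsilon)}+\|\partial_x^j\partial_yu\|_{L^2(\partial_h^+D_\varepsilon)}$, of order $\le j+1$. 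This yields, for $0\le j\le s$,
$$\|\varphi^{(j)}\partial_x^ju\|_{L^2}+\|\varphi^{(j)}\partial_x^j\partial_yu\|_{L^2}+\|\varphi^{(j)}\sqrt a\,\partial_x^{j+1}u\|_{L^2}\le C_j\Big(\sum_{k=0}^{j+1}\|D^ku\|_{L^2(\partial_h^+D_\varepsilon)}+\|u\|_{L^2(D_\varepsilon)}+\|f\|_{H^j(D_\varepsilon)}\Big),$$
with $C_j$ depending only on $\varphi$, $C_b$ and the $C^j$-norms of $a,b_1,b_2,c$.

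Finally, since $\varphi\le\varphi^{(j)}$ for every $j$, the last display controls $\|\varphi\,\partial_x^ju\|_{L^2}$ and $\|\varphi\,\partial_x^j\partial_yu\|_{L^2}$ for all $0\le j\le s$. For a mixed derivative $\partial_x^i\partial_y^ku$ with $i+k\le s$ and $k\ge 2$, I would use $u_{yy}=f-au_{xx}-b_1u_x-b_2u_y-cu$ repeatedly to trade each pair of $y$-derivatives for at most two $x$-derivatives; differentiating this identity expresses $\partial_x^i\partial_y^ku$ as a smooth-coefficient combination of the functions $\partial_x^{i'}u$, $\partial_x^{i'}\partial_yu$ with $i'\le s$ and of derivatives of $f$ of order $\le s$, all already estimated. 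Summing over $i+k\le s$ gives \eqref{3.16}. The point I expect to require care is the interplay between degeneracy and differentiation: each time a derivative is passed through $au_{xx}$ it produces, via \eqref{3.2}, a weight $\sqrt a$ of exactly the kind already present on the left in Lemma \ref{Thm3.1}, which is what lets the induction on tangential derivatives close, whereas the zeroth-order coefficient $c^{(j)}$ grows quadratically in $j$, which is precisely what forces the $s$-dependent smallness assumed in the statement; the cutoff bookkeeping and the tracking of the orders of the boundary data are routine but must be arranged to land exactly on the right-hand side of \eqref{3.16}.
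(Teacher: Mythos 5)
Your proof is correct and follows essentially the same route as the paper's: differentiate the equation in $x$, observe that the resulting operator retains the same leading coefficient $a$, that the modified first-order coefficient $b_1+ja_x$ still satisfies the Levy condition~(\ref{3.3}) via~(\ref{3.2}), that the effective zeroth-order coefficient grows quadratically in the number of tangential derivatives (which is precisely what the factor $s$ in the smallness hypothesis is there to absorb), then run an induction with nested cutoffs on $j$, using the $\sqrt a$-weighted term produced by Lemma~\ref{Thm3.1} at step $j-1$ to control the $\|\sqrt{\varphi a}\,\partial_x^j u\|$ term on the right-hand side at step $j$, and finally convert mixed $\partial_x^i\partial_y^k$-derivatives by solving~(\ref{2.1}) for $u_{yy}$. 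The paper phrases the induction by first stating the auxiliary estimate~(\ref{3.17}) and then assembling it, while you fold the assembly into an explicit induction on $j$ with cutoffs $\varphi^{(s)},\dots,\varphi^{(0)}$; this is a cosmetic difference in organization, not a different argument.

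One small point worth tightening: you assert $M_j\le Cj^2M$ with an absolute constant $C$, and then conclude that the hypothesis $s\varepsilon M^{1/2}<1$ keeps each $v_j$ within the scope of Lemma~\ref{Thm3.1}. Strictly speaking this requires $\varepsilon M_j^{1/2}\le C^{1/2}\,j\,\varepsilon M^{1/2}<1$, which needs $C\le 1$, not merely $C$ absolute. The paper avoids a stray constant by computing the effective coefficient of $\varphi^2(\partial_x^s u)^2$ exactly, namely $c+\tfrac12(s-1)^2a_{xx}+(s-\tfrac12)b_{1,x}-\tfrac12b_{2,y}$, whose size is $\le s^2M$ up to a benign overcount. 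In your write-up you should either reproduce this exact coefficient (so that no spare constant appears) or note explicitly that the constant in front is $\le 1$ for the relevant range of $j$; otherwise the numerology of the hypothesis does not quite close. This is a bookkeeping issue, not a conceptual gap, but since the whole purpose of the $s$-dependent smallness is to make this inequality exact, the proof should not leave an unnamed constant in the way.
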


We emphasize that $C_s$ is independent of $\inf a$.

\begin{proof} We first claim for any integer $s\ge 0$
\begin{align}\label{3.17}\begin{split}
&\|\varphi \partial_x^su\|_{L^2(D_\varepsilon)}+\|\varphi
\partial_y\partial_x^su\|_{L^2(D_\varepsilon)}+\|\varphi \sqrt{a}
\partial_x^{s+1}u\|_{L^2(D_\varepsilon)} \\
\leq &C \big( \|\partial_x^su\|_{L^2(\partial_h^+D_\varepsilon)}
+\|\partial_y\partial_x^su\|_{L^2(\partial_h^+D_\varepsilon)}
+\|\sqrt{\varphi a}\partial_x^su\|_{L^2(D_\varepsilon)}\\
&+\sum_{k=0}^{s-1}\|\varphi
\partial_x^ku\|_{L^2(D_\varepsilon)}+\sum_{k=0}^{s-1}\|\varphi
\partial_y\partial_x^ku\|_{L^2(D_\varepsilon)}+\|\varphi\partial_x^s f
\|_{L^2(D_\varepsilon)}\big),
\end{split}\end{align}
where $C$ is a positive constant depending on $\varphi$, $C_b$ and
the $C^s$-norms of $a, b_1, b_2$ and $c$.

We first assume (\ref{3.17})  for any $s\ge 0$ and prove
(\ref{3.16}). By (\ref{3.17})$_{s-1}$ and (\ref{3.17})$_s$, with
different cutoff functions, we obtain
\begin{align*}
&\|\varphi
\partial_x^su\|_{L^2(D_\varepsilon)}+\|\varphi
\partial_y\partial_x^{s-1}u\|_{L^2(D_\varepsilon)}\\
\leq &C \big(
\sum_{k=s-1}^s\|\partial_x^ku\|_{L^2(\partial_h^+D_\varepsilon)}
+\sum_{k=s-1}^s\|\partial_y\partial_x^ku\|_{L^2(\partial_h^+D_\varepsilon)}
+\|\sqrt[4]{\varphi}\sqrt{a}\partial_x^{s-1}u\|_{L^2(D_\varepsilon)}\\
&+\sum_{k=0}^{s-1}\|\sqrt{\varphi}
\partial_x^ku\|_{L^2(D_\varepsilon)}+\sum_{k=0}^{s-2}\|\sqrt{\varphi}
\partial_y\partial_x^ku\|_{L^2(D_\varepsilon)}+\sum_{k=s-1}^s\|\sqrt{\varphi}\partial_x^k f
\|_{L^2(D_\varepsilon)}\big).
\end{align*}
Note by (\ref{2.1})
\begin{equation*}
\partial_{yy}u=-a\partial_{xx}u-b_1\partial_xu
-b_2\partial_yu-cu+f\quad\text{in }D_\varepsilon.\end{equation*} It
is obvious that derivatives of $u$ of order $s$ can be obtained
easily in terms of $\partial_x^su$ and lower order derivatives of
$u$. Hence we obtain
\begin{align*}
\sum_{i+j=s}&\|\varphi
\partial_x^i\partial_y^{j}u\|_{L^2(D_\varepsilon)}
\leq C \big(
\sum_{k=s-1}^s\|\partial_x^ku\|_{L^2(\partial_h^+D_\varepsilon)}
+\sum_{k=s-1}^s\|\partial_y\partial_x^ku\|_{L^2(\partial_h^+D_\varepsilon)}
\\
&+\sum_{i+j\le s-1}\|\sqrt{\varphi}
\partial_x^i\partial_y^ju\|_{L^2(D_\varepsilon)}
+\sum_{i+j\le s}\|\sqrt{\varphi}\partial_x^i\partial_y^j f
\|_{L^2(D_\varepsilon)}\big).
\end{align*}
This implies (\ref{3.16}) by a simple induction.

Next, we prove (\ref{3.17}). Applying $\partial_x^s$ to (\ref{2.1}),
we get
\begin{equation}\label{3.18}\partial_y^2\partial_x^su+a\partial_x^2\partial_x^su
+\tilde
b_1\partial_x\partial_x^su+b_2\partial_y\partial_x^su+\tilde
c\partial_x^su=f_s,\end{equation} where
\begin{align*}
\tilde b_1&=b_1+sa_x,\\
\tilde c&=c+s(b_1)_x+\frac12s(s-1)a_{xx},\end{align*} and
\begin{equation*}
f_s=\partial_x^sf+\sum_{i=0}^{s-1}\big(c_{s,i-2}\partial_x^{s-i+2}a
+c_{s,i-1}\partial_x^{s-i+1}b_1+c_{s,i}\partial_x^{s-i}c\big)\partial_x^iu
+\sum_{i=0}^{s-1}c_{s,i}\partial_x^{s-i}b_2\partial_y\partial_x^iu,\end{equation*}
where $c_{s,i}$ is a positive constant for $i=0, 1,\cdots, s-1$
with $c_{s,-2}=c_{s,-1}=0$.  Note that (\ref{3.18}) has the same
structure as (\ref{2.1}). So we can proceed as in the proof of
Lemma \ref{Thm3.1} to get an estimate of $\partial_x^su$. We only
need to note that the corresponding coefficient for
$\varphi^2(\partial_x^su)^2$, as compared with that for
$\varphi^2u^2$ in (\ref{3.12}), is given by
$$\tilde c+\frac12a_{xx}-\frac12\tilde
b_{1,x}-\frac12b_{2,y}=c+\frac12(s-1)^2a_{xx}+(s-\frac12)b_{1,x}-\frac12b_{2,y}.$$
By Lemma \ref{Thm3.1}, we have for $\varepsilon\le
(s\sqrt{M})^{-1}$
\begin{align*} &\|\varphi
\partial_x^su\|_{L^2(D_\varepsilon)}+\|\varphi
\partial_y\partial_x^su\|_{L^2(D_\varepsilon)}+\|\varphi \sqrt{a}
\partial_x^{s+1}u\|_{L^2(D_\varepsilon)} \\
\leq &C \big( \|\partial_x^su\|_{L^2(\partial_h^+D_\varepsilon)}
+\|\partial_y\partial_x^su\|_{L^2(\partial_h^+D_\varepsilon)}
+\|\sqrt{\varphi a}\partial_x^su\|_{L^2(D_\varepsilon)}\|+\|\varphi
f_s \|_{L^2(D_\varepsilon)}\big).\end{align*} With the explicit
expression of $f_s$, we get (\ref{3.17}) easily. \end{proof}

Next, we study solutions in a neighborhood of the origin. We first
recall some results for (strictly) elliptic differential equations
in domains with an angular singularity on the boundary. Main
references are \cite{Kondratev1967}, Chapter 4 and Chapter 5 in
\cite{Grisvard1985} or Chapter 6 in \cite{KozlovMazyaRoss1997}.

For any nonnegative integer $m$, define the space $V^m(\mathcal
C_\kappa)$ as the closure of $C_0^\infty(\bar{\mathcal
C}_\kappa\setminus \{0\})$ with respect to the norm
$$\|u\|_{V^m(\mathcal C_\kappa)}=\big(\sum_{|\alpha|\le
m}\int_{\mathcal C_\kappa}r^{2(|\alpha|-m)}|D^\alpha
u|^2\big)^{1/2}.$$

To illustrate how the regularity depends on the angle of the cone,
we consider
\begin{align}\label{1.1}\begin{split} \Delta u&=f\quad\text{in }\mathcal C_\kappa,\\
u&=0\quad\text{on }\partial\mathcal
C_\kappa.\end{split}\end{align} Let $\kappa=\tan(\alpha/2)$ for an
$\alpha\in (0,\pi)$. Obviously,
$u=\displaystyle{r^{\frac{\pi}{\alpha}}\cos({\pi\theta}/{\alpha})}$
is a solution of the homogeneous (\ref{1.1}). It is easy to check
that such a $u$ is in $V^m(\mathcal C_\kappa\cap B_1)$ provided
$${(m-1)\alpha}<{\pi}.$$
In general, the regularity $u\in V^m(\mathcal C_\kappa)$ cannot be
improved if $(m-1)\alpha/\pi$ is not an integer. Hence solutions
of (\ref{1.1}) exhibit a better regularity in smaller cones. This
turns out to be a general result.

We consider a slightly more general case. For a constant $a>0$, we
consider
\begin{align}\label{1.2}\begin{split} u_{yy}+au_{xx}&=f\quad\text{in }
\mathcal C_{\kappa},\\
u&=0\quad\text{on }\partial\mathcal
C_{\kappa}.\end{split}\end{align} By introducing $$x=\sqrt{a}
s,\quad y=t,$$ we have
\begin{align*} u_{tt}+u_{ss}&=f\quad\text{in }\mathcal C_{\sqrt{a}\kappa},\\
u&=0\quad\text{on }\partial\mathcal C_{\sqrt{a}\kappa}.\end{align*}

\begin{lemma}\label{Lemma1.1} Let $\kappa, a>0$ be constants
and $u\in H^1_0(\mathcal C_\kappa)$ be the unique solution of
(\ref{1.2}) for an $f\in L^2(\mathcal C_\kappa)$. Then for any
integer $m\ge 2$ satisfying
\begin{equation}\label{1.1a}{2(m-1)\arctan(\sqrt{a}\kappa)}<{\pi},\end{equation} if  $f\in
V^{m-2}(\mathcal C_\kappa)$, then $u$ is in $V^m(\mathcal
C_\kappa)$ and satisfies
$$\|u\|_{V^m(\mathcal C_\kappa)}\le C\|f\|_{V^{m-2}(\mathcal C_\kappa)},$$
where $C$ is a positive constant depending only on $m$, $a$ and
$\kappa$.\end{lemma}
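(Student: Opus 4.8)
The plan is to reduce to the model case $u_{tt}+u_{ss}=f$ in the cone $\mathcal C_{\sqrt a\kappa}$ via the affine change of variables $x=\sqrt a\,s$, $y=t$ already indicated in the excerpt, and then invoke the classical theory of the Dirichlet problem for the Laplacian in a plane sector (Kondrat'ev; Grisvard, Chapters 4–5; Kozlov–Maz'ya–Rossmann, Chapter 6). Under this change of variables the weighted spaces transform in a controlled way: $V^m$ of the original cone corresponds, up to a constant depending on $a$ and $m$, to $V^m$ of the rescaled cone, because $r$ and the Euclidean gradient each scale by bounded factors (bounded above and below in terms of $\min(1,\sqrt a)$ and $\max(1,\sqrt a)$), and the number of derivatives is preserved. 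So it is enough to prove the statement for $a=1$, i.e. for $\Delta u=f$ in $\mathcal C_\kappa$ with angle $\alpha=2\arctan\kappa\in(0,\pi)$.

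For the model case, the key input is the Kondrat'ev theory: for the Dirichlet problem for $\Delta$ in a sector of opening $\alpha$, the operator
$$\Delta:\;V^m(\mathcal C_\kappa)\cap\{u=0\text{ on }\partial\mathcal C_\kappa\}\;\longrightarrow\;V^{m-2}(\mathcal C_\kappa)$$
is an isomorphism provided no exponent of the associated one-dimensional (Sturm–Liouville) operator pencil on the arc lies on the critical line $\operatorname{Re}\lambda=m-1$. Those exponents are exactly $\lambda_k=k\pi/\alpha$, $k\in\mathbb Z\setminus\{0\}$, with eigenfunctions $\cos(k\pi\theta/\alpha)$ (matching the explicit solution $r^{\pi/\alpha}\cos(\pi\theta/\alpha)$ displayed in the excerpt). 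Hence the condition that none of these hit the line $\operatorname{Re}\lambda=m-1$ is exactly $(m-1)\alpha/\pi\notin\mathbb Z$; and the stronger inequality $(m-1)\alpha<\pi$ assumed in \eqref{1.1a} guarantees in addition that \emph{all} exponents $\lambda_k$ lie strictly to the right of the line (i.e. $|\lambda_k|=|k|\pi/\alpha>m-1$ for all $k\neq0$), which is precisely what is needed so that the unique $H^1_0$ weak solution — which exists for any $f\in L^2\subset V^{m-2}$ by Lax–Milgram — carries no singular expansion terms and therefore lies in $V^m$. The norm estimate $\|u\|_{V^m}\le C\|f\|_{V^{m-2}}$ is then the quantitative form of this isomorphism, with $C$ depending only on $\alpha$ (equivalently on $\kappa$) and $m$; tracking the change of variables back reinstates the dependence on $a$.

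The steps, in order: (i) perform the substitution and check the two-sided equivalence of the $V^m$-norms, so that it suffices to treat $\Delta$ in $\mathcal C_{\sqrt a\kappa}$; (ii) quote existence and uniqueness of $u\in H^1_0$ for $f\in L^2$; (iii) compute the operator pencil and its spectrum $\{k\pi/\alpha\}$, verify that \eqref{1.1a} places the whole spectrum to the right of $\operatorname{Re}\lambda=m-1$, and conclude $u\in V^m$ with the asserted bound by the Kondrat'ev–Grisvard regularity theorem; (iv) undo the substitution to recover the estimate in the original variables. The main obstacle is step (iii): one must be careful that the $H^1_0$ solution is the one that picks up no singular terms — the inequality \eqref{1.1a} (as opposed to merely a non-integrality condition) is exactly the hypothesis that rules out any singular contribution between the energy space and $V^m$, and it is worth stating explicitly that the absence of intermediate pencil exponents in the strip $0<\operatorname{Re}\lambda\le m-1$ is what upgrades the weak solution directly to $V^m$-regularity.
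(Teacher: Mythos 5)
Your proposal matches the route the paper intends: the paper performs the same change of variables $x=\sqrt{a}\,s$, $y=t$ just before stating Lemma~\ref{Lemma1.1} and then presents the lemma as a consequence of the classical Kondrat'ev--Grisvard--Kozlov--Maz'ya--Rossmann theory, without writing out the argument; your proof fills in precisely that standard argument, and the computation that $\lambda_1=\pi/\alpha>m-1$ is equivalent to $(m-1)\alpha<\pi$ is the correct reading of the pencil-spectrum condition and is consistent with the worked example $r^{\pi/\alpha}\cos(\pi\theta/\alpha)$ and the remark following the lemma.

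Two small imprecisions worth correcting, though neither affects the substance. First, the inclusion $L^{2}\subset V^{m-2}(\mathcal C_\kappa)$ is backwards: since $V^{m-2}$ carries the weight $r^{2(|\alpha|-(m-2))}$, one has $V^{m-2}\subset L^{2}$ for $m\ge 2$ (with equality when $m=2$). The hypothesis of the lemma is that $f\in L^{2}$ produces the unique $H^{1}_{0}$ solution via Lax--Milgram, and that $f$ is \emph{additionally} assumed to lie in the smaller space $V^{m-2}$ to get the regularity upgrade; the containment should not be invoked. Second, the phrase ``all exponents $\lambda_k$ lie strictly to the right of the line'' is not literally true for the negative exponents $\lambda_{-k}=-k\pi/\alpha$, which lie far to the left. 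What actually rules them out is that the $H^{1}_{0}$ (energy) solution already has no singular terms with negative exponent; the content of \eqref{1.1a} is then that there is no pencil eigenvalue in the strip $0<\operatorname{Re}\lambda\le m-1$, which is exactly the condition for the weak solution to have no singular contribution between the energy space and $V^m$. You essentially say this at the end, so the fix is only in the phrasing.
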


Note that (\ref{1.1a}) always holds for $m=2$.

\begin{rmrk} If (\ref{1.1a}) is violated, then $u$ is not necessarily in
$V^m(\mathcal C_\kappa)$. To illustrate this, we consider
(\ref{1.1}), or (\ref{1.2}) with $a=1$. We write
$\kappa=\tan(\alpha/2)$ for an $\alpha\in (0,\pi)$ and let $m>2$
be an integer such that $(m-1)\alpha/\pi$ is not an integer. If
$f\in V^{m-2}(\mathcal C_\kappa)$, then any solution $u$ of
(\ref{1.1}) admits a decomposition
$$u=\sum_{j}c_jr^{\frac{j\pi}{\alpha}}\cos\frac{j\pi\theta}{\alpha}+w,$$
where $w\in V^m(\mathcal C_\kappa)$ and the summation is extended
over all integer $j$ in the interval $(\alpha/\pi,
(m-1)\alpha/\pi)$.
%Moreover, there holds
%$$\|w\|_{V^l(\mathcal C_\kappa)}\le C\|f\|_{V^{l-2}(\mathcal C_\kappa)},$$
%where $C$ is a positive constant depending only on $l$ and $\kappa$.
\end{rmrk}

For solutions of (\ref{2.1}), the regularity is governed by the
corresponding result for the constant coefficient operator
$\partial_{yy}+a(0)\partial_{xx}$.

\begin{lemma}\label{Lemma1.2} Let $\kappa$ be a constant,
$a,b_1,b_2$ and $c$ be smooth functions in $\Omega_\kappa$
satisfying (\ref{3.1a}) and $u\in H^1_0(\Omega_\kappa)$ be a
solution of (\ref{2.1}) for an $f\in L^2(\Omega_\kappa)$. Then for
any integer $m\ge 2$ satisfying
\begin{equation}\label{1.1b}{2(m-1)\arctan(\sqrt{a(0)}\kappa)}<{\pi},\end{equation}
if  $\xi f\in V^{m-2}(\mathcal C_\kappa)$, then $\eta u$ is in
$V^m(\mathcal C_\kappa)$ and satisfies
$$\|\eta u\|_{V^m(\mathcal C_\kappa)}\le C(\|\xi u\|_{L^2(\Omega_\kappa)}+
\|\xi f\|_{V^{m-2}(\mathcal C_\kappa)}),$$ where $\xi$ and $\eta$
are two arbitrary cutoff functions in $B_1$ with $\xi=1$ on the
support of $\eta$ and $C$ is a positive constant depending only on
$m$, $a(0)$, $\kappa$, $\xi$, $\eta$ and $C^{m-2}$-norms of $a,
b_i$ and $c$.\end{lemma}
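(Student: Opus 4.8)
The plan is to treat (\ref{2.1}) near the vertex as a perturbation of the constant-coefficient operator $\mathcal L_0:=\partial_{yy}+a(0)\partial_{xx}$, for which Lemma~\ref{Lemma1.1} supplies the sharp weighted regularity, the perturbation being small near $0$ precisely because $a-a(0)$ vanishes there. After the linear substitution $x=\sqrt{a(0)}\,s$, $y=t$, the operator $\mathcal L_0$ becomes the Laplacian on $\mathcal C_{\sqrt{a(0)}\kappa}$, and hypothesis (\ref{1.1b}) is exactly the condition under which Lemma~\ref{Lemma1.1} produces a bound in $V^m$ with no singular term $r^{j\pi/\alpha}\cos(j\pi\theta/\alpha)$, $1\le j\le m-1$, entering the solution. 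Away from the vertex $u$ is already smooth by classical interior and boundary elliptic estimates (the part of $\partial\Omega_\kappa$ away from $0$ is smooth and the Dirichlet datum is $0$), so the essential new content of the lemma concerns the behaviour of $u$ at $0$; it suffices to prove a local weighted a priori estimate on a small ball $B_\rho$ around $0$ and then transfer it to the actual solution.

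The a priori estimate I would establish is: there are $\rho>0$ and $C>0$ with
\[
\|v\|_{V^m(\mathcal C_\kappa)}\le C\big(\|\mathcal L v\|_{V^{m-2}(\mathcal C_\kappa)}+\|v\|_{V^0(\mathcal C_\kappa)}\big)
\]
for every $v\in V^m(\mathcal C_\kappa)$ supported in $B_\rho$. By the very definition of $V^m$ it is enough to prove this for $v\in C^\infty_0(\bar{\mathcal C}_\kappa\setminus\{0\})$ supported in $B_\rho$. Such a $v$ lies in $H^1_0(\mathcal C_\kappa)$ and solves $\mathcal L_0 v=g$ with
\[
g=\mathcal L v+(a(0)-a)v_{xx}-b_1v_x-b_2v_y-cv ,
\]
so Lemma~\ref{Lemma1.1} at level $m$ (using (\ref{1.1b})) gives $\|v\|_{V^m}\le C_0\|g\|_{V^{m-2}}$. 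The heart of the matter is the perturbation bound
\[
\|(a(0)-a)v_{xx}\|_{V^{m-2}}\le C\rho\,\|v\|_{V^m}+C\|v\|_{V^{m-1}},
\]
together with $\|b_1v_x\|_{V^{m-2}}+\|b_2v_y\|_{V^{m-2}}+\|cv\|_{V^{m-2}}\le C\|v\|_{V^{m-1}}$. Both rest on two elementary features of the dilation-invariant spaces $V^j$: multiplication by a smooth function is bounded $V^j\to V^j$; and multiplication by a smooth function vanishing at $0$ carries a function supported in $B_\rho$ from $V^j$ into $V^{j+1}$, with norm at most $C\rho\,\|\cdot\|_{V^j}+C\|\cdot\|_{V^{j-1}}$, since such a factor is $\le C\rho$ on $B_\rho$ while each of its derivatives is only $O(1)$ and hence merely shifts a term one step down the $V$-scale. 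Feeding these bounds into the estimate from Lemma~\ref{Lemma1.1} and choosing $\rho$ so small that $C_0C\rho\le\tfrac12$, we absorb $\tfrac12\|v\|_{V^m}$ to the left and obtain $\|v\|_{V^m}\le 2C_0\|\mathcal L v\|_{V^{m-2}}+C\|v\|_{V^{m-1}}$; interpolating $\|v\|_{V^{m-1}}$ between $V^m$ and $V^0$ and absorbing once more yields the displayed a priori estimate.

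To pass to the solution $u$ one runs the familiar bootstrap, increasing the order one unit at a time (cf.\ \cite{Kondratev1967} and Chapter~6 of \cite{KozlovMazyaRoss1997}). The base case $\eta u\in V^2$ follows from $u\in H^1_0(\Omega_\kappa)$ together with a Hardy inequality in the sector (available because its opening angle $2\arctan(\sqrt{a(0)}\kappa)$ is $<\pi$), which already puts $\eta u$ in $V^1$, and then the $m=2$ a priori estimate; the step from $V^k$ to $V^{k+1}$, $k<m$, applies the a priori estimate at level $k+1$ to a cut-off of $u$ after the usual regularization (tangential difference quotients, or mollification followed by a truncation away from $0$), the admissibility (\ref{1.1b}) at level $k+1$ being guaranteed by its validity at level $m$ since $\arctan(\sqrt{a(0)}\kappa)>0$. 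Iterating up to level $m$, and combining with the classical elliptic estimates away from the vertex (where $V^m$ is comparable to $H^m$) by a covering argument, gives $\eta u\in V^m(\mathcal C_\kappa)$ together with the stated bound $\|\eta u\|_{V^m(\mathcal C_\kappa)}\le C(\|\xi u\|_{L^2(\Omega_\kappa)}+\|\xi f\|_{V^{m-2}(\mathcal C_\kappa)})$.

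The main obstacle is the perturbation bound in the second step: one has to be sure that $(a-a(0))v_{xx}$, although it involves the top-order derivatives of $v$, genuinely carries a small coefficient after localization to $B_\rho$, and this is exactly where the vanishing of $a-a(0)$ at the vertex and the dilation invariance of the weighted spaces $V^j$ are both indispensable --- it is the mechanism by which the degeneracy helps the regularity. A secondary, more bookkeeping difficulty is keeping (\ref{1.1b}) valid at every intermediate level of the induction and controlling the cut-off commutators uniformly during the regularization near $0$; these are routine given the a priori estimate and the references cited.
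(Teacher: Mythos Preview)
Your proposal is correct and is precisely the standard Kondrat'ev frozen-coefficient argument. Note, however, that the paper does not actually supply a proof of this lemma: it is stated as a known consequence of the corner-singularity theory, with the references \cite{Kondratev1967}, \cite{Grisvard1985} and \cite{KozlovMazyaRoss1997} given just before Lemma~\ref{Lemma1.1}, and the paper remarks immediately afterward that only the regularity assertion (not the estimate) will be used. Your sketch --- freeze the principal part at the vertex to obtain $\mathcal L_0=\partial_{yy}+a(0)\partial_{xx}$, invoke Lemma~\ref{Lemma1.1} under (\ref{1.1b}), absorb the perturbation $(a-a(0))\partial_{xx}$ by the smallness of $a-a(0)$ on $B_\rho$ together with the dilation structure of the $V^j$ scale, and bootstrap from $V^2$ to $V^m$ --- is exactly the argument one finds in those references, so your approach and the one the paper (implicitly) relies on coincide.
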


Later on, we will only use the regularity assertion, instead of
estimates, in Lemma \ref{Lemma1.2}. Now we begin to derive
estimates of $u$ close to the origin independent of $\inf a$. The
main result for this part is the following lemma.

\begin{lemma}\label{Lemma2.41} Let $m$ be an integer, $f\in H^m(\mathcal C_\kappa\cap B_1)$
and $u$ be an $H^1$-solution of (\ref{2.11}) in $\mathcal C_1\cap
B_1$ satisfying $u=0$ on $\theta=\pm\alpha$. Then there exist
constants $\delta_m$ and $\kappa_m$ such that, if
$\kappa\le\kappa_m$ and $\sqrt{a(0)}\kappa\le\delta_m$, then $u\in
H^{m}(\mathcal C_1\cap B_1)$ and
\begin{equation}\label{2.40}
\|u\|_{H^m(\mathcal C_\kappa\cap \{x<\frac12\})}\le
C_m\big(\sum_{i=0}^{m+1}\|D^iu\|_{L^2(\mathcal C_\kappa\cap
\{x=\frac12\})}+\|f\|_{H^m(\mathcal C_\kappa\cap B_1)}\big),
\end{equation} where $\delta_m$ is a positive constant depending only on $m$,
$\kappa_m$ is a positive constant depending only on the $C^2$-norms
of $a, b_i, c$ and $C_m$ is a positive constant depending only on
the $C^{m}$-norms of $a$, $b_i$ and $c$.
\end{lemma}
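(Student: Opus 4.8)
The plan is to combine the a priori energy estimates from Corollary \ref{Cor3.1} with the qualitative regularity from Lemma \ref{Lemma1.2}, using a dyadic decomposition of the cone near the origin. First I would observe that equation (\ref{2.11}) (which, up to harmless modifications, is of the form (\ref{2.1})) is strictly elliptic away from the origin once $\inf a>0$, so by Lemma \ref{Lemma1.2} the hypothesis $2(m-1)\arctan(\sqrt{a(0)}\kappa)<\pi$—which is guaranteed by the choice $\sqrt{a(0)}\kappa\le\delta_m$ with $\delta_m$ small enough that $2(m-1)\arctan\delta_m<\pi$—ensures $\eta u\in V^m(\mathcal C_\kappa)$ for any cutoff $\eta$ supported near the origin. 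This gives the \emph{qualitative} membership $u\in H^m(\mathcal C_1\cap B_1)$ claimed in the statement. The real content is the \emph{quantitative} estimate (\ref{2.40}) with a constant independent of $\inf a$.

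For the quantitative bound I would introduce dyadic annuli $A_j=\mathcal C_\kappa\cap\{2^{-j-1}<x<2^{-j}\}$ and rescale each $A_j$ to unit size by $x=2^{-j}\tilde x$, $y=2^{-j}\tilde y$. Under this scaling the rescaled domain is (a fixed portion of) a slab of the type $D_\varepsilon$ from (\ref{3.0}) with $\varepsilon$ comparable to $\kappa$, the coefficient $a$ is replaced by a rescaled coefficient still satisfying (\ref{3.1a}) with the \emph{same} $a_0$, and—crucially—the structural condition (\ref{3.3}) on $b_1$ is preserved with a constant $C_b$ uniform in $j$, because of how $\sqrt a$ and $\partial_x a$ scale together. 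The narrowness hypothesis $s\varepsilon(\cdots)^{1/2}<1$ of Corollary \ref{Cor3.1} is arranged by taking $\kappa\le\kappa_m$ small, with $\kappa_m$ depending only on $m$ and the $C^2$-norms of the coefficients. Applying Corollary \ref{Cor3.1} on each rescaled annulus bounds the $H^m$ norm of $u$ on $A_j$ by the $H^m$ norm of $f$ on a slightly larger annulus plus the $L^2$ norm of $u$ there plus boundary traces on $\partial_h^+$ of the rescaled slab, i.e.\ on $\{x\approx 2^{-j}\}$; the point is that these boundary terms are \emph{interior} to $\mathcal C_1\cap B_1$, so they are controlled by the $H^m$ norm of $u$ on $A_{j-1}$ (the next annulus \emph{toward} $x=\tfrac12$), producing a recursive chain of estimates running from the vertex outward to $\{x=\tfrac12\}$.

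I would then sum the rescaled estimates over $j$. Keeping careful track of the powers of $2^{-j}$ introduced by the scaling—these are exactly the weights $r^{2(|\alpha|-m)}$ appearing in the $V^m$ norm—the chain of annular estimates telescopes, and the accumulated right-hand side is controlled by $\sum_i\|D^iu\|_{L^2(\mathcal C_\kappa\cap\{x=1/2\})}$ (for $i\le m+1$, coming from the outermost boundary traces) together with $\|f\|_{H^m(\mathcal C_\kappa\cap B_1)}$ and a global $\|u\|_{L^2}$ term; the latter is in turn absorbed, using that $u$ is an $H^1$ solution with zero Dirichlet data on $\theta=\pm\alpha$ and a Poincaré/energy inequality on the cone, into $\|f\|$ and the boundary data. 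The convergence of the sum over $j$ is where the smallness of $\sqrt{a(0)}\kappa$ enters a second time: the \emph{decay rate} in the dyadic chain is governed by the indicial exponent $\pi/\bigl(2\arctan(\sqrt{a(0)}\kappa)\bigr)$ of the model operator $\partial_{yy}+a(0)\partial_{xx}$ on the cone, exactly as in Lemma \ref{Lemma1.1}, and requiring this exponent to exceed $m$ is precisely the condition $2(m-1)\arctan(\sqrt{a(0)}\kappa)<\pi$ (with a little room to spare), which fixes $\delta_m$. I expect the main obstacle to be the bookkeeping in this summation step: verifying that the weights from rescaling match the $V^m$ weights and that the geometric series genuinely converges under the stated smallness of $\delta_m$, rather than merely the individual annular estimates, which follow mechanically from Corollary \ref{Cor3.1}.
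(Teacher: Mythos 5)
Your qualitative step is correct and coincides with the paper's: invoking Lemma \ref{Lemma1.2} under $\sqrt{a(0)}\kappa\le\delta_m$ to get $\eta u\in V^m(\mathcal C_\kappa)$ is exactly what the paper does. The quantitative part, however, has a genuine geometric flaw. Corollary \ref{Cor3.1} is set up after flattening one Dirichlet ray to $\{y=0\}$, so $\partial_h^+D_\varepsilon$ is a curve \emph{parallel to the degenerate boundary ray}, sitting at a small distance $\varepsilon$ inside the cone; it is \emph{not} the arc $\{x\approx 2^{-j}\}$ separating consecutive dyadic annuli. Thus Corollary \ref{Cor3.1} applied on a rescaled annulus $A_j$ produces trace terms on a curve interior to $A_j$ itself, not on the interface with $A_{j-1}$, and the ``recursive chain running from the vertex outward to $\{x=\tfrac12\}$'' has no handle on those traces. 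Moreover, even with a corrected interface geometry, an annular energy identity on $A_j$ yields boundary integrals on \emph{both} arcs $\{x\approx 2^{-j}\}$ and $\{x\approx 2^{-j-1}\}$; the latter (toward the vertex) carries the wrong sign and is not controlled by $A_{j-1}$, so the recursion does not close. Finally, the heuristic that ``the decay rate is governed by the indicial exponent'' invokes Lemma \ref{Lemma1.1}, whose constant depends on $a$ and hence on $\inf a$, precisely the dependence we must avoid.

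The paper sidesteps all of this by working globally on the sector $R=\mathcal C_\kappa\cap\{x<\tfrac12\}$ in polar coordinates rather than annulus-by-annulus. After subtracting a Taylor-type polynomial $P$ (Lemma \ref{Lemma2.40}, which needs $\kappa^2a(0)$ small, hence the $\delta_m$), Lemma \ref{Lemma1.2} guarantees $u-P\in V^m$ so the weighted integrals $\int_R r^{-2(m-i)}|\partial_r^i(u-P)|^2$ are finite; this is exactly what permits, in the proof of Lemma \ref{Lemma2.16}, the passage $\bar r\to 0$ that kills the inner boundary term at the vertex. Lemma \ref{Lemma2.21} then delivers, in one shot and with constants depending only on $C^m$-norms (not $\inf a$), a weighted estimate whose boundary integral is over $\partial_v^+R=\{x=\tfrac12\}$, which is precisely the form appearing in (\ref{2.40}). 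Lemma \ref{Lemma2.11} converts the polar derivatives back to $D^\alpha u$, and (\ref{2.41}) accounts for $P$. Your proposal never produces a boundary integral localized at $\{x=\tfrac12\}$ and so cannot reach (\ref{2.40}) as stated.
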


We emphasize  that $C_m$ is independent of $\inf a$. The proof of
Lemma \ref{Lemma2.41} is complicated. We first establish some
lemmas.

\begin{lemma}\label{Lemma2.40} Let $m$ be a nonnegative integer,
$a,b_1,b_2$ and $c$ be smooth functions in $\Omega_\kappa$
satisfying (\ref{3.1a}) and $u$ be an $H^2$-solution of (\ref{2.1})
with $u=0$ on $\partial\Omega_\kappa\cap B_1$. Then there exists a
positive constant $\eta_m$ depending only on $m$ such that, if
\begin{equation}\label{2.2a}\kappa^2a(0)\le \eta_m,\end{equation} then there exists
a polynomial $\mathcal P_m(u)$ of degree $m$ such that $\mathcal
P_m(u)=0$ on $\partial\Omega_\kappa\cap B_1$, any coefficient $c_k$
in the homogeneous part of degree $k$, for any $k\le m$, in
$\mathcal P_m(u)$ satisfies
$$|c_k|\le C_k\sum_{|\alpha|\le k-2}|D^\alpha f(0)|,$$ and
\begin{equation*}\mathcal L\big(u-\mathcal{P}_m(u)\big)=\text{ $(m-2)$-th remainder
of }f- \tilde{\mathcal L}(\mathcal P_{m}(u)),\end{equation*}  where
$C_k$ is a positive constant depending only on $\delta_m$, $\kappa$,
and the $C^{k-1}$-norms of $a, b_i$ and $c$, and $\tilde{\mathcal
L}=(a-a(0))\partial_{xx}+b_1\partial_x+b_2\partial_y+c$.
\end{lemma}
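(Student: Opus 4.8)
The plan is to construct the polynomial $\mathcal{P}_m(u)$ by solving, term by term in increasing degree, the formal Taylor expansion of equation (\ref{2.1}) at the origin, while simultaneously enforcing the boundary condition. Write $\mathcal{P}_m(u) = \sum_{k=0}^m P_k$, where each $P_k$ is a homogeneous polynomial of degree $k$. Since $u \in H^2$ and $u = 0$ on $\partial\Omega_\kappa \cap B_1 = \partial\mathcal{C}_\kappa \cap B_1$, which consists of the two rays $y = \pm\kappa x$, I would first observe that any polynomial vanishing on both rays must be divisible by $(y^2 - \kappa^2 x^2)$. So I write $P_k = (y^2 - \kappa^2 x^2) Q_{k-2}$ with $Q_{k-2}$ homogeneous of degree $k-2$; the lowest nontrivial term is then degree $2$. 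The idea is to choose the $Q_j$ recursively so that $\mathcal{L}(\mathcal{P}_m(u))$ matches the Taylor polynomial of $f$ of degree $m-2$ at the origin, i.e. so that $f - \tilde{\mathcal{L}}(\mathcal{P}_m(u)) - (\partial_{yy} + a(0)\partial_{xx})(\mathcal{P}_m(u))$ has vanishing Taylor expansion through order $m-2$.

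The recursion works as follows. Apply $\mathcal{L}$ to $\mathcal{P}_m(u)$ and separate the principal part with frozen coefficients: $\mathcal{L} = (\partial_{yy} + a(0)\partial_{xx}) + \tilde{\mathcal{L}}$, where $\tilde{\mathcal{L}} = (a - a(0))\partial_{xx} + b_1\partial_x + b_2\partial_y + c$ lowers the relevant filtration because $a - a(0)$ vanishes at $0$ and $b_i, c$ are bounded. Matching the homogeneous part of degree $k - 2$ on both sides gives an equation of the form
\begin{equation*}
(\partial_{yy} + a(0)\partial_{xx})\big((y^2 - \kappa^2 x^2) Q_{k-2}\big) = (\text{degree }k-2\text{ part of }f) - (\text{contributions from }P_0,\dots,P_{k-1}),
\end{equation*}
which determines $Q_{k-2}$ provided the linear map $Q \mapsto (\partial_{yy} + a(0)\partial_{xx})((y^2 - \kappa^2 x^2)Q)$ is invertible on homogeneous polynomials of degree $k - 2$. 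This is where the smallness condition (\ref{2.2a}), $\kappa^2 a(0) \le \eta_m$, enters: for $\kappa^2 a(0)$ small the operator $(y^2 - \kappa^2 x^2)$ is a small perturbation of $y^2$, and $Q \mapsto \partial_{yy}(y^2 Q)$ is readily checked to be an isomorphism on homogeneous polynomials of each degree (its "leading behavior" is multiplication by a nonzero constant on each monomial when read off against a suitable basis). The perturbation $a(0)\partial_{xx}((y^2 - \kappa^2 x^2)Q) + \partial_{yy}(-\kappa^2 x^2 Q)$ carries a factor of $\kappa^2$ or $a(0)\kappa^2$, hence for $\eta_m$ chosen small (depending only on $m$, since we only need invertibility in degrees $\le m$) the full map is invertible by a Neumann series, and the solution $Q_{k-2}$ — and hence each coefficient $c_k$ — is bounded by $C_k \sum_{|\alpha| \le k-2} |D^\alpha f(0)|$ plus bounded multiples of the previously determined coefficients, which by induction yields the stated estimate with $C_k$ depending only on $\kappa$, $\eta_m$ and the $C^{k-1}$-norms of $a, b_i, c$.

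Once $\mathcal{P}_m(u)$ is constructed with these properties, the final identity is immediate: by design the Taylor polynomial of degree $m - 2$ of $\mathcal{L}(\mathcal{P}_m(u))$ equals that of $f - \tilde{\mathcal{L}}(\mathcal{P}_m(u)) + \tilde{\mathcal{L}}(\mathcal{P}_m(u))$... more precisely, $\mathcal{L}(u - \mathcal{P}_m(u)) = f - \mathcal{L}(\mathcal{P}_m(u)) = f - (\partial_{yy} + a(0)\partial_{xx})\mathcal{P}_m(u) - \tilde{\mathcal{L}}(\mathcal{P}_m(u))$, and the first two terms on the right agree to order $m - 2$ at the origin by the matching construction, so their difference is the $(m-2)$-th remainder of $f$ shifted by $-\tilde{\mathcal{L}}(\mathcal{P}_m(u))$, which is exactly the asserted form. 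The main obstacle is the invertibility of the modified Laplacian-type operator $Q \mapsto (\partial_{yy} + a(0)\partial_{xx})((y^2 - \kappa^2 x^2)Q)$ on homogeneous polynomials and tracking that the threshold $\eta_m$ depends only on $m$ (not on the coefficients): I would handle this by computing the action on the monomial basis $x^i y^j$ explicitly, showing the "diagonal" $\partial_{yy}(y^2 \cdot)$ part has a strictly positive lower bound depending only on the degree, and absorbing the remaining $\kappa^2$- and $a(0)\kappa^2$-weighted terms as a perturbation. One technical point to be careful about is that $a(0)$ itself is only bounded above (by $1$, via (\ref{3.1a})) and bounded below by $a_0$, but since the perturbation estimate only uses the product $\kappa^2 a(0) \le \eta_m$ and $a(0) \le 1$, no lower bound on $a$ is needed — consistent with the emphasized claim that all constants are independent of $\inf a$.
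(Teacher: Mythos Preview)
Your approach is essentially the paper's: build $\mathcal P_m(u)$ degree by degree as $(y^2-\kappa^2x^2)$ times a homogeneous polynomial, and determine the coefficients by matching the frozen-coefficient operator $\mathcal L_0=\partial_{yy}+a(0)\partial_{xx}$ against the Taylor expansion of $f$ and the lower-order contributions from $\tilde{\mathcal L}$. The recursion, the estimate on the coefficients, and the final identity all go through exactly as you describe.

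There is one slip in your perturbation argument for invertibility. You take the ``main part'' to be $Q\mapsto \partial_{yy}(y^2Q)$ and claim the remainder carries a factor of $\kappa^2$ or $a(0)\kappa^2$. But the piece $a(0)\,\partial_{xx}(y^2Q)$ carries only $a(0)$, not $\kappa^2a(0)$; so your argument as written would force $a(0)$ and $\kappa^2$ to be separately small, not merely their product, and the threshold $\eta_m$ would then depend on $\kappa$. The paper avoids this by first rescaling $(x,y)\mapsto(x/\kappa,y)$ to normalize $\kappa=1$; the equation then acquires $\kappa^2a$ in front of $\partial_{xx}$, the boundary factor becomes $x^2-y^2$, and the coefficient matrix for the linear system is explicitly computed and shown to be triangular with nonzero diagonal when $\kappa^2a(0)=0$, hence invertible for $\kappa^2a(0)$ small with a threshold depending only on the degree $k\le m$. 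Equivalently, without rescaling you can take the main part to be $Q\mapsto\partial_{yy}\big((y^2-\kappa^2x^2)Q\big)$, which is already upper triangular with nonzero diagonal in the monomial basis for every $\kappa$, and treat only $a(0)\,\partial_{xx}\big((y^2-\kappa^2x^2)Q\big)$ as the perturbation; conjugating by the same rescaling then shows the relevant smallness parameter is exactly $\kappa^2a(0)$. With this correction your proof is complete and matches the paper's.
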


It is easy to see from the proof below that $\mathcal{P}_m(u)$ is
the $m$-th Taylor polynomial of $u$ at 0 if $u$ is $C^m$ in a
neighborhood of the origin.

\begin{proof} We first consider the transform
$(x,y)\mapsto (x/\kappa,y)$. Then (\ref{2.1}) has the form
\begin{equation*}
u_{yy}+\kappa^2au_{xx}+\kappa b_1u_x+b_2u_y+cu=f\quad\text{in
}\tilde\Omega_1,\end{equation*} for a domain
$\tilde\Omega_1\subset\mathbb R^2$ with $\tilde\Omega_1\cap
B_1=\mathcal C_1\cap B_1$. In the following, we simply assume
$\kappa=1$.

Let $\mathcal L_0=\partial_{yy}+a(0)\partial _{xx}$. We first note
$\partial_xu(0)=\partial_yu(0)=0$ since $u=0$ on $y=\pm\kappa x$.
Set
$$u=\mathcal{P}_m(u)+\mathcal{R}_m(u)=Q_2+\cdots+Q_{m}+\mathcal{R}_m(u),$$
where $Q_{k}$ is a homogeneous polynomial of degree $k$ for
$k=2,\cdots, m$, with $Q_2=\cdots=Q_{m}=0$ on
$\partial\Omega_1\cap B_1$. Hence for $k=2, \cdots, m$, $Q_k$ has
the form
$$Q_{k}=(x^2-y^2)\sum_{i=0}^{k-2}c_{k-2,i}x^{k-2-i}y^i.$$
Then
$$\mathcal L\big(\mathcal{R}_m(u)\big)+\mathcal L_0Q_2+\cdots+\mathcal L_0Q_{m}=\tilde
f,$$ where $$\tilde f=f-(\mathcal L-\mathcal
L_0)\big(\mathcal{P}_m(u)\big)=f-\sum_{k=2}^{m}(\mathcal
L-\mathcal L_0)Q_{k}.$$ Note that $\mathcal L_0Q_k$ is a
homogeneous polynomial of degree $k-2$. We set
\begin{equation}\label{2.2}\mathcal L_0Q_{k}=\text{the $(k-2)$-th
homogeneous part of }\tilde f,\quad\text{for each }k=2,\cdots,
m.\end{equation} Then
\begin{equation*}\mathcal L\big(\mathcal{R}_m(u)\big)=\text{ $(m-2)$-th remainder
of }f- (\mathcal L-\mathcal L_0)(\mathcal
P_{m}(u)).\end{equation*} We claim that we can solve successively
$Q_2, Q_3, \cdots, Q_{m}$. In fact, a simple calculation shows
\begin{align*} \mathcal
L_0Q_{k}=&\sum_{i=0}^{k-2}\big((k-i)(k-i-1)a(0)-(i+2)(i+1)\big)c_{k-2,i}\\
&-\sum_{i=2}^{k-2}(k-i)(k-i-1)a(0)c_{k-2,i-2}+\sum_{i=0}^{k-4}(i+2)(i+1)c_{k-2,i+2}.
\end{align*}
If we write (\ref{2.2}) as a linear system for $c_{k-2,0},
c_{k-2,1},\cdots, c_{k-2,k-2}$, the $(k-1)\times(k-1)$ coefficient
matrix  is obviously invertible if $a(0)=0$ and hence invertible if
$a(0)$ is small. It is easy to see that $c_{k-2,0},
c_{k-2,1},\cdots, c_{k-2,k-2}$ solving (\ref{2.2}) is a linear
combination of $D^\alpha f(0)$, $|\alpha|\le k$.
\end{proof}

To discuss the regularity of solutions close to the origin, we need
to consider (\ref{2.1}) in polar coordinates. We note
$$x=r\cos\theta, \quad y=r\sin\theta.$$
It is easy to see that any $D^\alpha u$, for some $|\alpha|=m$, is a
linear combination of
$$\frac{1}{r^{m-i}}\partial_r^i\partial_\theta^j u,\quad 1\le
i+j\le m,$$ with coefficients given by smooth functions of $\theta$.

In polar coordinates, (\ref{2.1}) has the form
\begin{equation}\label{2.11}\tilde{a}_{11}r^2u_{rr}+2\tilde{a}_{12}ru_{r\theta}+
u_{\theta\theta}+\tilde{b}_1ru_r+\tilde{b}_2u_\theta+\tilde
cu=\tilde f,\end{equation} where
\begin{align*}
\tilde{a}_{11}=&\frac{\sin^2\theta+a\cos^2\theta}{\cos^2\theta+a\sin^2\theta},\\
\tilde{a}_{12}=&\frac{(1-a)\sin\theta\cos\theta}{\cos^2\theta+a\sin^2\theta},\\
\tilde{b}_1=&\frac{\cos^2\theta+a\sin^2\theta
+b_1r\cos\theta+b_2r\sin\theta}{\cos^2\theta+a\sin^2\theta},\\
\tilde{b}_2=&\frac{-2(1-a)\cos\theta\sin\theta
-b_1r\sin\theta+b_2r\cos\theta}{\cos^2\theta+a\sin^2\theta},\\
\tilde c=&\frac{r^2c}{\cos^2\theta+a\sin^2\theta},\end{align*} and
$$\tilde f=\frac{r^2f}{\cos^2\theta+a\sin^2\theta}.$$

\begin{lemma}\label{Lemma2.11} Let $u$ be a $C^m$-solution of
(\ref{2.11}). Then for any integer $k,l$ with $1\le k+l\le m$
$$\frac{1}{r^{m-k}}\partial_r^k\partial_\theta^l
u=\sum_{i=0}^{m}\frac{1}{r^{m-i}}\big(c_{i}\partial_r^iu+d_{i}
\partial_r^i\partial_\theta
u\big)
+\sum_{i=2}^m\sum_{j=0}^{m-i}\frac{1}{r^{m-i}}e_{ij}\partial_r^{i-2}
\partial_\theta^j\tilde
f,$$ where $d_{m}=0$ and all coefficients $c_{i}, d_{i}$ and
$e_{ij}$ are functions depending on derivatives of $\tilde{a}_{11},
\tilde{a}_{12}, \tilde{b}_1, \tilde{b}_2$ and $\tilde{c}$ (with
respect to $r$ and $\theta$) up to the order $m-2$.
\end{lemma}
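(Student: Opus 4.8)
The plan is to use (\ref{2.11}) to solve for $\partial_\theta^2u$ and then to remove, one at a time, every $\theta$-derivative of order at least two, keeping careful track of the powers of $r$ that are generated along the way. Rewriting (\ref{2.11}) gives
\[
\partial_\theta^2u=\tilde f-\tilde a_{11}r^2\partial_r^2u-2\tilde a_{12}r\partial_r\partial_\theta u-\tilde b_1r\partial_ru-\tilde b_2\partial_\theta u-\tilde cu,
\]
so $\partial_\theta^2u$ is written in terms of $\tilde f$ and derivatives of $u$ carrying at most one $\theta$-derivative, each weighted by precisely the power of $r$ matching its order in $\partial_r$. This Euler (Fuchsian) structure of (\ref{2.11}) is exactly what keeps the normalisation $r^{-(m-i)}$ stable under differentiation.

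First I would fix $m$ and argue by strong induction on $l$, uniformly in $k$ with $1\le k+l\le m$. When $l=0$ or $l=1$ there is nothing to prove: take $c_k=1$, respectively $d_k=1$, and all remaining coefficients zero, noting that $k\le m-1$ when $l=1$ so $d_m$ is never used. For $l\ge2$ I would apply $\partial_r^k\partial_\theta^{l-2}$ to the displayed identity and expand with the Leibniz rule. Since $\partial_r^q(r^2)=0$ for $q\ge3$ and $\partial_r^q(r)=0$ for $q\ge2$, every term so produced is a derivative of order at most $k+l-2\le m-2$ of one of $\tilde a_{11},\tilde a_{12},\tilde b_1,\tilde b_2,\tilde c$, times a nonnegative power of $r$, times either $\partial_r^{k'}\partial_\theta^{l'}u$ with $k'+l'\le m$ and $l'\le l-1$, or $\partial_r^{k'}\partial_\theta^{l'}\tilde f$ with $k'+l'\le m-2$. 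In each such term $k'\le k$, so $r^{-(m-k)}=r^{\,k-k'}r^{-(m-k')}$ with $k-k'\ge0$; absorbing the factor $r^{\,k-k'}$ and the coefficient derivative into the coefficients, the $u$-terms acquire the asserted form by the induction hypothesis applied to $(k',l')$, while the $\tilde f$-terms already sit in the double sum $\sum_{i=2}^{m}\sum_{j=0}^{m-i}$ (with $i=k'+2$, $j=l'$).

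Two points then remain. That $d_m=0$: every quantity occurring anywhere in the argument has total differentiation order $\le m$ (preserved at each step because $k'+l'\le k+l$), whereas $\partial_r^m\partial_\theta u$ has order $m+1$, so such a term never appears. That only derivatives up to order $m-2$ of the coefficients enter: this is immediate from the recursion, since each substitution loads a coefficient with at most $k'+l'-2\le m-2$ further derivatives, and products of such quantities are again of this type. The hard part will be the bookkeeping inside the inductive step — verifying that the Leibniz expansion never attaches a power of $r$ more singular than $r^{-(m-i)}$ in front of $\partial_r^iu$ or $\partial_r^i\partial_\theta u$ (this genuinely uses that the top-order coefficients of (\ref{2.11}) carry exactly the powers $r^2,r,1$ of the Euler operator, while $\tilde b_1,\tilde b_2,\tilde c$ and $\tilde f$ are no worse), and that the index ranges $0\le i\le m$, $0\le j\le m-i$, together with $d_m=0$, are respected. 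Once the induction is organised with $k$ left free and $l$ as the induction variable, each individual check is routine though lengthy.
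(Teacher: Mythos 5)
Your approach --- solve (\ref{2.11}) for $\partial_\theta^2 u$, apply $\partial_r^k\partial_\theta^{l-2}$, expand by Leibniz, and induct on $l$ --- is exactly the ``simple induction based on (\ref{2.11})'' that the paper invokes and omits, and your tracking of the $r$-powers on the $u$-terms is right: the Euler structure $r^2, r, 1$ is what keeps the normalization $r^{-(m-i)}$ stable, and the Leibniz expansion only introduces nonnegative extra powers of $r$ in front of $\partial_r^{k'}\partial_\theta^{l'}u$, which you correctly identify as the content of the bookkeeping.

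The one assertion you leave unexamined, though, is the delicate one. You say the $\tilde f$-terms ``already sit in the double sum (with $i=k'+2$, $j=l'$)'' after absorbing $r^{k-k'}$. But the weight the lemma attaches to $\partial_r^{k'}\partial_\theta^{l'}\tilde f$ is $r^{-(m-i)}=r^{-(m-k'-2)}$, whereas your Leibniz expansion produces $r^{-(m-k')}$; absorbing $r^{k-k'}$ still leaves a residual $r^{-2}$, which must be pushed into $e_{ij}$. Thus, unlike $c_i$ and $d_i$, the coefficients $e_{ij}$ are genuinely singular at the origin, with leading behavior $r^{-2}$. This is tolerated by the lemma as stated (which only calls them ``functions depending on derivatives of the coefficients''), and in fact the $r^{-2}$ is exactly what cancels the factor $r^2$ built into $\tilde f = r^2 f/(\cos^2\theta+a\sin^2\theta)$, so that the application in the proof of Lemma~\ref{Lemma2.41} yields the bounded weight $r^{-(m-i-2)^+}$ on $\partial_r^i\partial_\theta^j f_m$. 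Your phrasing implies $e_{ij}$ is as tame as $c_i,d_i$, which it is not; the argument survives, but the $\tilde f$-terms are precisely the place where the $r$-power bookkeeping is \emph{not} routine, and you should verify them explicitly and record that $e_{ij}\sim r^{-2}$.
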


The proof is by a simple induction based on (\ref{2.11}) and hence
omitted. Therefore, in order to estimate $D^mu$, we only need to
estimate
$$\frac{1}{r^{m-i}}\partial_r^i\partial_\theta^ju,\quad 0\le j\le 1,\ 0\le i+j\le m.$$

In the following, we assume $\kappa\le 1$ and consider (\ref{2.1})
in
$$R=\{(x,y)\in\Omega_\kappa; x<1/2\},$$
or the equivalent (\ref{2.11}) in
$$R=\{(r,\theta); 0<r<r(\theta), -\alpha<\theta<\alpha\},$$
where $\alpha\in(0,\pi/2)$ with $\tan\alpha=\kappa$ and
$r=r(\theta)$ corresponds to $x=1/2$, hence
$r(\theta)=1/(2\cos\theta)$.

\begin{lemma}\label{Lemma2.16} Let $\mu$ be a positive constant and $u$ be a $C^2$-solution
of (\ref{2.1}) in $R$ satisfying $u=0$ on $\theta=\pm\alpha$ and
$$\int_R\big(\frac{u^2}{r^\mu}+\frac{u_r^2}{r^{\mu-2}}\big)<\infty.$$
Then there exists a sufficiently small $\kappa_0$ such that, if
$$\kappa(|a|_{C^2}+|b_i|_{C^1}+|c|+1)^{\frac12}<\kappa_0,$$ then
\begin{equation}\label{2.15}
\int_R\big(\frac{u^2}{r^\mu}
+\frac{1}{r^{\mu-2}}(u_r\sin\theta+\frac1ru_\theta\cos\theta)^2\big)
\le C_0\int_{\partial_v^+R}(u^2+u_r^2)+\int_R\frac{f^2}{r^{\mu-4}},
\end{equation}
where $C_0$ is a positive constant depending only on the $C^1$-norm
of $a$ and the $L^\infty$-norm of $b_i$.
\end{lemma}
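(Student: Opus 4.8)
The plan is to run a weighted energy estimate, multiplying equation (\ref{2.1}) (in the form (\ref{2.11})) by a suitable weighted multiple of $u$ and integrating over $R$. Since the desired inequality (\ref{2.15}) carries the singular weight $r^{-\mu}$, the natural test function is $r^{-\mu}u$ (with possibly an extra smooth cutoff to justify integrability, removed at the end via the finiteness hypothesis on $\int_R(u^2/r^\mu+u_r^2/r^{\mu-2})$). Working in polar coordinates is essential here: the left-hand side of (\ref{2.15}) features the combination $(u_r\sin\theta+\tfrac1r u_\theta\cos\theta)$, which is precisely $\partial_y u$ expressed in polar coordinates, so the estimate is really a weighted bound on $\|u/r^{\mu/2}\|_{L^2}$ and $\|r^{-\mu/2+1}\partial_y u\|_{L^2}$. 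The principal part $u_{yy}+au_{xx}$ contributes, after multiplication by $r^{-\mu}u$ and integration by parts, a good term of the form $\int_R r^{-\mu}\big((\partial_y u)^2 + a(\partial_x u)^2\big)$ together with error terms coming from differentiating the weight $r^{-\mu}$.

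First I would integrate by parts the second-order terms. The integration by parts in $x$ and $y$ (equivalently in $r$ and $\theta$) produces: (i) the boundary integral on $\partial_v^+R = \{x=1/2\}$, which is harmless and gives the $\int_{\partial_v^+R}(u^2+u_r^2)$ term on the right; (ii) no boundary contribution on $\theta=\pm\alpha$ because $u=0$ there — and here is where the geometry matters, since $\{\theta=\pm\alpha\}$ is non-characteristic and in fact space-like, so the trace $u=0$ kills the relevant boundary terms with the correct sign; and (iii) interior error terms. The error terms are of two kinds. The benign ones involve lower-order coefficients $b_1,b_2,c$ and one derivative of $u$; these are absorbed by the good terms after a Cauchy–Schwarz inequality and the smallness assumption $\kappa(|a|_{C^2}+|b_i|_{C^1}+|c|+1)^{1/2}<\kappa_0$, which makes the domain $R$ angularly thin (width $\sim\kappa$) so that a Poincaré-type inequality in the $\theta$-variable controls $\int u^2$ by $\int(\partial_\theta u)^2$ with a small constant. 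The dangerous error term comes from the weight: differentiating $r^{-\mu}$ produces a factor $\sim \mu r^{-\mu-1}$ paired with $u\,\partial_r u$, i.e. a term of the form $\mu\int_R r^{-\mu-1}u\,u_r$ and, after a further integration by parts, a term $\sim\mu(\mu+\cdots)\int_R r^{-\mu}u^2/r\cdot(\text{something})$; the key algebraic point (as in the classical Hardy inequality on a cone) is that the coefficient $\tilde a_{11}$ of $r^2 u_{rr}$ and the coefficient of $u_{\theta\theta}$ in (\ref{2.11}) are arranged so that the quadratic form in $(\mu)$ arising from the weight has a definite sign, or can be made so by choosing $\kappa_0$ small enough (which forces $\sqrt{a(0)}\,\kappa$ small, hence $\tilde a_{11}$ close to $1$ near $r=0$).

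The main obstacle is precisely this last point: controlling the weight-error terms uniformly in $\inf a$. One must exploit the non-characteristic/space-like nature of $\{\theta=\pm\alpha\}$ — quantitatively, that $\tilde a_{11}\geq c_0>0$ and $1$ (the coefficient of $u_{\theta\theta}$) dominates the relevant combination — so that the Hardy-type inner-product identity $\int_R r^{-\mu}(\partial_y u)^2 \gtrsim \mu^2\int_R r^{-\mu}u^2/(\cdots)$ goes through with constants depending only on $\mu$ and the $C^1$-norm of $a$, not on $\inf a$. After establishing this weighted coercivity, one collects terms: the good term $\int_R r^{-\mu}(\partial_y u)^2$ reproduces the second term on the left of (\ref{2.15}) (noting $r^{-\mu}(\partial_y u)^2 = r^{-\mu+2}\cdot r^{-2}(u_r\sin\theta+\tfrac1r u_\theta\cos\theta)^2$... up to bookkeeping with powers, the weight $r^{\mu-2}$ in the statement matches $r^{-\mu}$ times $r^{2}$), the $\int_R r^{-\mu}u^2$ bound follows from Hardy, the right-hand side collects the boundary term and $\int_R f^2/r^{\mu-4}$ (obtained by applying Cauchy–Schwarz to $\int_R r^{-\mu}u f = \int_R (r^{-\mu/2}u)(r^{-\mu/2}f)$ and then using Hardy again to trade $r^{-\mu/2}u$ for $r^{-\mu/2+1}\partial_y u$, which shifts two powers and produces $f^2/r^{\mu-4}$). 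Finally, to remove the a priori integrability assumption used to justify the integrations by parts, I would run the whole argument with $r^{-\mu}$ replaced by the truncated weight $(r+\epsilon)^{-\mu}$, derive the estimate with constants independent of $\epsilon$, and let $\epsilon\to0$ using monotone convergence together with the finiteness hypothesis.
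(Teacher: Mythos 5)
Your overall plan — multiply by $r^{-\mu}u$, integrate by parts in polar coordinates, get $\int_R r^{-\mu}\big((\partial_y u)^2 + a(\partial_x u)^2\big)$ as the good term, use boundary vanishing on $\theta=\pm\alpha$ and thinness of the cone — matches the paper's strategy, and your identification of $u_r\sin\theta+\tfrac1r u_\theta\cos\theta$ as $\partial_y u$ is exactly right. But two of your load-bearing claims are wrong, and they concern precisely the degenerate regime the lemma exists to handle.

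First, you assert that $\tilde a_{11}$ (the coefficient of $r^2 u_{rr}$) is bounded below, or is close to $1$ near $r=0$. It is not. From the explicit formula $\tilde a_{11}=\dfrac{\sin^2\theta+a\cos^2\theta}{\cos^2\theta+a\sin^2\theta}$, in the thin cone (where $|\theta|\le\alpha$ with $\tan\alpha=\kappa$ small) one has $\tilde a_{11}\sim a+\kappa^2$, which tends to zero as $a\to 0$. Since the whole point is to obtain constants independent of $\inf a$, any argument relying on a lower bound for $\tilde a_{11}$ fails. The paper avoids this entirely: the identity
$\frac{u_\theta^2}{r^2}+\frac{2\tilde a_{12}u_\theta u_r}{r}+\tilde a_{11}u_r^2 \ge (\partial_y u)^2 + a(\partial_x u)^2$
holds with no lower bound on $a$, and the $a$ is simply carried along as a weight on $u_x^2$.

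Second, the mechanism you invoke to absorb the zero-order term — a Hardy-type inequality yielding $\int_R r^{-\mu}(\partial_y u)^2 \gtrsim \mu^2\int_R r^{-\mu}u^2$ — is not the mechanism that works here, and the smallness it produces goes in the wrong parameter. The weight-error term $\Lambda u^2/r^\mu$ itself grows like $\mu^2$, so a Hardy constant $\sim\mu^2$ cannot beat it. What actually works is the Poincar\'e estimate that exploits $u=0$ on \emph{both} rays $\theta=\pm\alpha$: integrating in the $y$-direction across the thin cone (of width $2\kappa x\sim 2\kappa r$) gives $u^2(x,y)\le 2\kappa x\int_{-\kappa x}^{\kappa x}u_y^2\,dt$, hence $\int_R u^2/r^\mu\le 4\kappa^2(\kappa^2+1)^{(\mu-2)/2}\int_R u_y^2/r^{\mu-2}$. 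The small factor is $\kappa^2$, and the hypothesis $\kappa(|a|_{C^2}+|b_i|_{C^1}+|c|+1)^{1/2}<\kappa_0$ is calibrated precisely to make $\kappa^2(|\Lambda|_{L^\infty}+1)$ small. You do gesture at a $\theta$-direction Poincar\'e in passing, which is morally the same thing, but it is subordinate to the (incorrect) Hardy mechanism in your write-up rather than being the main point.

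A minor issue: the $\int_R f^2/r^{\mu-4}$ on the right comes directly from $\tilde f=r^2f/(\cos^2\theta+a\sin^2\theta)$, so $\tilde f^2/r^\mu\lesssim f^2/r^{\mu-4}$; there is no need to trade powers by a second Hardy step as you propose. Your $\epsilon$-truncated weight to justify the integrations by parts is a valid alternative to the paper's device of integrating over $\{r>\bar r\}$ and letting $\bar r\to 0$ along a good subsequence.
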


The proof is similar to that of Lemma \ref{Thm3.1}.

\begin{proof}
We multiply (\ref{2.11}) by $-u/r^\mu$ and get by a straightforward
calculation
\begin{align}\label{2.16}\begin{split}
&-\frac1{2r}\big(\frac{2\tilde{a}_{11}uu_r}{r^{\mu-3}}
-\big(2\tilde{a}_{12,\theta}+r\tilde{a}_{11,r}
-(\mu-3)\tilde{a}_{11}-\tilde{b}_1\big)\frac{u^2}{r^{\mu-2}}\big)_r\\
&\quad-\big(\frac{uu_\theta}{r^\mu}+\frac{2\tilde{a}_{12}uu_r}{r^{\mu-1}}
+\frac{\tilde{b}_2u^2}{2r^\mu}\big)_\theta+ \frac{1}{r^{\mu-2}}\big(
\frac{u_\theta^2}{r^2}+\frac{2\tilde{a}_{12}u_\theta
u_r}{r}+\tilde{a}_{11}u_r^2\big)=\Lambda
\frac{u^2}{r^\mu}-\frac{u\tilde f}{r^{\mu}},
\end{split}\end{align} where
\begin{align}\label{2.16a}\begin{split}\Lambda=&\frac12
\big(2r\tilde{a}_{12,\theta
r}-2(\mu-2)\tilde{a}_{12,\theta}+r^2\tilde{a}_{11,rr}
-2(\mu-3)r\tilde{a}_{11,r}\\
&+(\mu-3)(\mu-2)\tilde{a}_{11}
-r\tilde{b}_{1,r}+(\mu-2)\tilde{b}_1-\tilde{b}_{2,\theta}+2c\big).
\end{split}\end{align} A simple calculation shows
\begin{align*}
&\frac{u_\theta^2}{r^2}+\frac{2\tilde{a}_{12}u_\theta
u_r}{r}+\tilde{a}_{11}u_r^2\\
=&\frac{1}{\cos^2\theta+a\sin^2\theta}\big((u_r\sin\theta+
\frac1ru_\theta\cos\theta)^2
+a(u_r\cos\theta-\frac1ru_\theta\sin\theta)^2\big)\\
\ge& (u_r\sin\theta+\frac1ru_\theta\cos\theta)^2
+a(u_r\cos\theta-\frac1ru_\theta\sin\theta)^2=u_y^2+au_x^2.\end{align*}
Now we integrate (\ref{2.16}) with respect to $rdrd\theta$ in
$$R_{\bar r}=\{(r,\theta); \bar r<r<r(\theta), -\alpha<\theta<\alpha\},$$
for any $\bar r<1/2$. Since $u=0$ on $\theta=\pm\alpha$, there is no
boundary integral on $\theta=\pm\alpha$. By the Cauchy inequality,
we have
\begin{align*}
\int_{R_{\bar r}}\frac{1}{r^{\mu-2}}(u_y^2+au_x^2)\le&
C_0\int_{-\alpha}^\alpha\big(\frac{u^2}{r^\mu}+\frac{u_r^2}{r^{\mu-2}}\big)
\big|_{r=\bar r}d\theta+C_0\int_{r=r(\theta)}(u^2+u_r^2)\\
&+\int_{R_{\bar r}}(\Lambda+1) \frac{u^2}{r^\mu}+\int_{R_{\bar
r}}\frac{\tilde f^2}{r^{\mu}},
\end{align*}
where $C_0$ depends on the $L^\infty$-norms of
$\tilde{a}_{12,\theta}$, $r\tilde{a}_{11,r}$, $\tilde{a}_{11}$ and
$\tilde{b}_1$. Next, we write $$\int_{R\cap
\{r<\frac12\}}\big(\frac{u^2}{r^{\mu}}+\frac{u_r^2}{r^{\mu-2}}\big)=\int_0^{\frac12}
\frac1r\int_{-\alpha}^\alpha
\big(\frac{u^2}{r^{\mu-2}}+\frac{u_r^2}{r^{\mu-4}}\big)d\theta dr.$$
Then there exists a sequence $r_i\to 0$ such that
$$\int_{-\alpha}^\alpha
\big(\frac{u^2}{r^{\mu-2}}+\frac{u_r^2}{r^{\mu-4}}\big)
\big|_{r=r_i}d\theta\to0\quad\text{as
}r_{i}\to0. $$ By taking $\bar r=r_i\to0$, we have
\begin{equation}\label{2.17}
\int_R\frac{1}{r^{\mu-2}}(u_y^2+au_x^2) \le
C_0\int_{r=r(\theta)}(u^2+u_r^2)+\int_R(\Lambda+1)
\frac{u^2}{r^\mu}+\int_R\frac{\tilde f^2}{r^{\mu}}.
\end{equation}

For any $(r, \theta)\in R$, the corresponding $(x,y)$ satisfies
$|y|<\kappa x$ and $x<1/2$. Since $u(x, -\kappa x)=0$, we have
$$u(x,y)=\int_{-\kappa x}^y u_y(x,t)dt,$$
and $$u^2(x,y)\le 2\kappa x\int_{-\kappa x}^{\kappa
x}u_y^2(x,t)dt.$$ Note that $x^2\le r^2=x^2+y^2\le (\kappa^2+1)x^2.$
This implies
$$\frac{u^2(x,y)}{(x^2+y^2)^{\frac\mu2}}\le 2(\kappa^2+1)^{\frac{\mu-2}2}
\frac{\kappa x}{x^2+y^2}\int_{-\kappa x}^{\kappa
x}\frac{u_y^2(x,t)}{(x^2+t^2)^{\frac{\mu-2}{2}}}dt.$$ An integration
in $R=\{(x,y)\in\Omega_\kappa; x<1/2\}$ yields
$$\int_R\frac{u^2}{r^\mu}\le 4\kappa^2
(\kappa^2+1)^{\frac{\mu-2}2}\int_R\frac{u_y^2}{r^{\mu-2}}.$$ If
$\kappa$ is small so that
$$4\kappa^2
(\kappa^2+1)^{\frac{\mu-2}2}(|\Lambda|_{L^\infty}+1)\le \frac12,$$
we then have by (\ref{2.17})
\begin{equation*}
\int_R\big(\frac{u^2}{r^\mu}+\frac{u_y^2}{r^{\mu-2}}+\frac{au_x^2}{r^{\mu-2}}\big)
\le C_0\int_{r=r(\theta)}(u^2+u_r^2)+\int_R\frac{\tilde
f^2}{r^{\mu}}.
\end{equation*}
This implies (\ref{2.15}). \end{proof}

\begin{lemma}\label{Lemma2.21} Let $m$ be an integer and $u$ be a $C^{m+2}$-solution
of (\ref{2.11}) in $R$ satisfying $u=0$ on $\theta=\pm\alpha$ and
$$\int_R\sum_{i=0}^{m+1}\frac{(\partial_r^iu)^2}{r^{2(m-i)}}
<\infty.$$ If
\begin{equation}\label{2.19}m\kappa(|a|_{C^{2}}
+|b_i|_{C^{1}}+|c|_{L^\infty}+1)^{\frac12}<\kappa_0,\end{equation}
then
\begin{equation}\label{2.20}
\int_R\big(\sum_{i=0}^m\frac{(\partial_r^iu)^2}{r^{2(m-i)}}
+\sum_{i=0}^{m-1}\frac{(\partial_\theta\partial_r^iu)^2}{r^{2(m-i)}}\big)
\le C_m\int_{\partial_v^+R}\sum_{i=0}^{m+1}(\partial_r^iu)^2
+\int_R\sum_{i=0}^{m}\frac{(\partial_r^if)^2}{r^{2(m-i-2)^+}},
\end{equation} where $\kappa_0$ is as in Lemma \ref{Lemma2.16}
and $C_m$ is a positive constant depending only on the
$C^{m}$-norms of $a$, $b_i$ and $c$.
\end{lemma}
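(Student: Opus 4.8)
The plan is to prove Lemma \ref{Lemma2.21} by induction on $m$, using Lemma \ref{Lemma2.16} as the base case ($m=0$, with $\mu=0$, after noting that the weighted quantity on the left of \eqref{2.15} dominates $u^2+(u_y\sin\theta+\tfrac1ru_\theta\cos\theta)^2 r^2/r^2$ and that $(u_r\sin\theta+\tfrac1ru_\theta\cos\theta)^2 = u_y^2 + \ldots$ as in the displayed identity there). For the inductive step, I would differentiate the polar equation \eqref{2.11}. The natural vector field to differentiate by is $\partial_r$: if $u$ solves \eqref{2.11}, then $v=\partial_r^k u$ (for $k\le m$) satisfies an equation of the same structural type as \eqref{2.11}, namely
\begin{equation*}
\tilde a_{11}r^2 v_{rr}+2\tilde a_{12}r v_{r\theta}+v_{\theta\theta}
+\hat b_1^{(k)} r v_r+\hat b_2^{(k)} v_\theta+\hat c^{(k)} v=\hat f_k,
\end{equation*}
where the modified coefficients $\hat b_1^{(k)},\hat b_2^{(k)},\hat c^{(k)}$ differ from $\tilde b_1,\tilde b_2,\tilde c$ by bounded amounts (controlled by $C^k$-norms of $a,b_i,c$ and by $k$ itself, since commuting $\partial_r^k$ past $r^2\partial_{rr}$ etc. produces terms like $2k r\partial_r$, $k(k-1)$, which is exactly where the factor $m$ in hypothesis \eqref{2.19} comes from), and where
\begin{equation*}
\hat f_k=\partial_r^k\tilde f+(\text{lower-order terms in }\partial_r^j u,\ \partial_r^j\partial_\theta u,\ j\le k-1).
\end{equation*}
Then I would apply Lemma \ref{Lemma2.16} to $v=\partial_r^k u$ with weight exponent $\mu=2(m-k)$ — this is legitimate precisely because \eqref{2.19} forces the smallness $k\kappa(|a|_{C^2}+\cdots)^{1/2}<\kappa_0$ needed to invoke Lemma \ref{Lemma2.16}, and because the finiteness hypothesis $\int_R\sum_{i\le m+1}(\partial_r^i u)^2/r^{2(m-i)}<\infty$ guarantees the a-priori finiteness condition $\int_R(v^2/r^\mu+v_r^2/r^{\mu-2})<\infty$ required there. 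This yields
\begin{equation*}
\int_R\Big(\frac{(\partial_r^k u)^2}{r^{2(m-k)}}+\frac{1}{r^{2(m-k-1)}}(\partial_r v\sin\theta+\tfrac1r\partial_\theta v\cos\theta)^2\Big)
\le C_0\int_{\partial_v^+R}(v^2+v_r^2)+\int_R\frac{\hat f_k^2}{r^{2(m-k-2)}}.
\end{equation*}

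Summing these estimates over $k=0,1,\dots,m$, the boundary terms combine into $\sum_{i=0}^{m+1}(\partial_r^i u)^2$ on $\partial_v^+R$ (note $v_r=\partial_r^{k+1}u$, so $k=m$ contributes $\partial_r^{m+1}u$), and the right-hand forcing terms contribute $\int_R\sum_{i=0}^m (\partial_r^i f)^2/r^{2(m-i-2)^+}$ coming from the $\partial_r^k\tilde f$ part (recalling $\tilde f=r^2 f/(\cos^2\theta+a\sin^2\theta)$ carries an extra $r^2$, which accounts for the shift from $2(m-k)$ down to $2(m-k-2)$, and the $(\cdot)^+$ handles $k\ge m-1$ where no negative power survives). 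The remaining pieces of $\hat f_k$ are the lower-order terms $\partial_r^j u,\partial_r^j\partial_\theta u$ with $j\le k-1\le m-1$, weighted by $1/r^{2(m-k-2)}\le 1/r^{2(m-j-1)}$ roughly; these are exactly the quantities appearing on the left-hand side of the inductive hypothesis \eqref{2.20} at level $m-1$, so they get absorbed. To handle the $\partial_\theta\partial_r^i u$ terms on the left of \eqref{2.20}: the Lemma \ref{Lemma2.16}-type output controls $(\partial_r v\sin\theta+\tfrac1r\partial_\theta v\cos\theta)^2/r^{2(m-k-1)}$, which near $\theta=0$ dominates $(\partial_\theta\partial_r^k u)^2/r^{2(m-k)}$ up to the already-controlled radial derivative terms; away from $\theta=0$ (i.e. $|\cos\theta|$ bounded below fails only at $\theta=\pm\pi/2$, impossible since $|\theta|<\alpha<\pi/2$) this is uniform, so one recovers the full $\theta$-derivative norm by a partition of the angular variable or simply by noting $\cos\theta\ge\cos\alpha>0$ throughout $R$.

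\textbf{Main obstacle.} The principal difficulty is bookkeeping the weights correctly when differentiating and when feeding lower-order terms into the induction: one must check that every term produced in $\hat f_k$ by commuting $\partial_r^k$ through the singular coefficients of \eqref{2.11} carries a weight no worse than what the inductive hypothesis \eqref{2.20} at level $m-1$ (or the just-obtained level-$m$ estimates for smaller $k$) can absorb, and that the constants depend only on $C^m$-norms and on $m\kappa(\cdots)^{1/2}$, not on $\inf a$ — the latter being automatic since Lemma \ref{Lemma2.16}'s constant $C_0$ depends only on $C^1$-norms of $a$ and $L^\infty$-norms of $b_i$, never on a lower bound for $a$. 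A secondary technical point is justifying the absence of boundary terms at $r=0$: as in the proof of Lemma \ref{Lemma2.16}, one integrates over $R_{\bar r}$ and extracts a sequence $r_i\to0$ along which the inner radial-slice integrals vanish, which is permissible precisely because of the assumed finiteness $\int_R\sum_{i\le m+1}(\partial_r^i u)^2/r^{2(m-i)}<\infty$. Once these accounting issues are handled, the estimate \eqref{2.20} follows by assembling the summed inequalities and invoking \eqref{2.20} at level $m-1$.
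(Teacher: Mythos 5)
Your plan correctly identifies the family of weighted estimates to use: apply $\partial_r^s$ to the polar equation (\ref{2.11}), observe that $\partial_r^s u$ solves an equation of the same structure, and invoke Lemma \ref{Lemma2.16} with $\mu=2(m-s)$, which is legitimate thanks to (\ref{2.19}) and the finiteness hypothesis. This much matches the paper. However, there is a genuine gap in the step where you extract the $\partial_\theta$-derivative bound.

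When you differentiate (\ref{2.11}) by $\partial_r^s$, the commutator produces, among other things, a term of the form $\tilde{d}^{(s)}\partial_\theta\partial_r^{s-1}u$ with $\tilde{d}^{(s)} = s\tilde{b}_{2,r}+s(s-1)(r\tilde{a}_{12})_{rr}$, which carries a factor growing like $s^2$ and sits with weight $r^{-2(m-s+1)}$. This is not a harmless lower-order term: after applying Lemma \ref{Lemma2.16} to the differentiated equation, you obtain (in the paper's notation, estimate (\ref{2.24})) a bound on $\int_R r^{-2(m-s-1)}\bigl(\partial_r^{s+1}u\sin\theta+\tfrac1r\partial_\theta\partial_r^su\cos\theta\bigr)^2$, with $s^2\int_R r^{-2(m-s+1)}(\partial_\theta\partial_r^{s-1}u)^2$ appearing on the right. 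You assert that $(\partial_\theta\partial_r^ku)^2/r^{2(m-k)}$ is dominated by this controlled quantity ``up to the already-controlled radial derivative terms.'' But $(\partial_r^{k+1}u)^2/r^{2(m-k-1)}$ is \emph{not} already controlled at that point: the estimate for it (namely, the one obtained at level $k+1$) itself feeds back a term $(k+1)^2(\partial_\theta\partial_r^ku)^2/r^{2(m-k)}$. The two inequalities are coupled, and merely summing over $k$ does not close the loop, since the left side supplies a coefficient $\cos^2\theta\approx 1$ while the right side has $(k+1)^2\to\infty$. The paper resolves this with an explicit bootstrap: inequality (\ref{2.26}) bounds $\int_R(\partial_\theta\partial_r^su)^2/r^{2(m-s)}$ by $\kappa^2\int_R(\partial_r^{s+1}u)^2/r^{2(m-s-1)}$ plus controlled terms (using $|\tan\theta|\le\kappa$), inequality (\ref{2.27}) bounds the latter by $(s+1)^2\int_R(\partial_\theta\partial_r^su)^2/r^{2(m-s)}$ plus controlled terms, and the two are combined under the absorption condition $\kappa(s+1)<1/2$. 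This is precisely the non-trivial use of hypothesis (\ref{2.19}): it is not merely that the smallness allows you to invoke Lemma \ref{Lemma2.16} for each $s\le m$, but that $m\kappa$ small is needed to make the coupled inequalities contractive.

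A secondary point: you propose induction on $m$ with Lemma \ref{Lemma2.16} as the $m=0$ base case (``with $\mu=0$''), but Lemma \ref{Lemma2.16} requires $\mu>0$, and the paper instead runs an \emph{internal} recursion on the secondary index $k=0,1,\dots,m$ (claim (\ref{2.25})) inside a fixed $m$, with the base $k=0$ being a piece of (\ref{2.24})$_0$. Moreover, for the problematic commutator term $s^2(\partial_\theta\partial_r^{s-1}u)^2/r^{2(m-s+1)}$, the weight matches the left side of (\ref{2.20}) at level $m$ (not $m-1$), so the level-$(m-1)$ inductive hypothesis does not absorb it; only the level-$m$ bootstrap does. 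You should restructure around the coupled system (\ref{2.26})--(\ref{2.27}) and carry out the absorption argument under $\kappa(s+1)<1/2$ to close the estimate.
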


\begin{proof} For any $s=0,1\cdots, m$, we apply $\partial_r^s$ to
(\ref{2.11}) to get
\begin{align}\label{2.21}\begin{split}\tilde{a}_{11}r^2(\partial_r^su)_{rr}
+&2\tilde{a}_{12}r(\partial_r^su)_{r\theta}+
(\partial_r^su)_{\theta\theta}+\tilde{b}^{(s)}_1r(\partial_r^su)_r
+\tilde{b}^{(s)}_2(\partial_r^su)_\theta\\
&+\tilde c^{(s)}\partial_r^su=\tilde f^{(s)}-\tilde
d^{(s)}\partial_\theta\partial_r^{s-1}u,\end{split}\end{align} where
\begin{align*}
\tilde{b}_1^{(s)}=&\tilde{b}_1+s(r\tilde{a}_{11,r}+2\tilde{a}_{11}),\\
\tilde{b}_2^{(s)}=&\tilde{b}_{2}+2s(r\tilde{a}_{12})_r,\\
\tilde{c}^{(s)}=&c+s\partial_r(r\tilde{b}_1)+\frac12s(s-1)(r^2\tilde{a}_{11})_{rr},\\
\tilde{d}^{(s)}=&s\tilde{b}_{2,r}+s(s-1)(r\tilde{a}_{12})_{rr},\end{align*}
and \begin{align*}\tilde
f^{(s)}=&\partial_r^s(r^2f)-\sum_{i=0}^{s-2}\big(2c_{s,i-1}
\partial_r^{s-i+1}(r\tilde{a}_{12})
+c_{s,i}\partial_r^{s-i}\tilde{b}_2\big)\partial_\theta\partial_r^iu\\
&-\sum_{i=0}^{s-1}\big(c_{s,i-2}\partial_r^{s-i+2}(r^2\tilde{a}_{11})
+c_{s,i-1}\partial_r^{s-i+1}(r\tilde{b}_1)
+c_{s,i}\partial_r^{s-i}c\big)\partial_r^iu,\end{align*} where
$c_{s,i}$ is a constant depending only on $s$ and $i$ with
$c_{s,-2}=c_{s,-1}=0$. Since (\ref{2.21}) has a similar structure as
(\ref{2.11}), we may apply Lemma \ref{Lemma2.16} to (\ref{2.21}). If
$$\int_R(\frac{(\partial_r^su)^2}{r^{\mu}}+
\frac{(\partial_r^{s+1}u)^2}{r^{\mu-2}})<\infty,$$
and
$$4\kappa^2
(\kappa^2+1)^{\frac{\mu-2}2}(|\Lambda_s|_{L^\infty}
+|\tilde{d}^{(s)}|_{L^\infty}+1)\le
\kappa_0,$$ we obtain
\begin{align*}
&\int_R\big(\frac{(\partial_r^su)^2}{r^\mu}+\frac{1}{r^{\mu-2}}
(\partial_r^{s+1}u\sin\theta+\frac1r\partial_\theta\partial_r^su\cos\theta)^2\big)\\
\le&
C_0\int_{\partial_v^+R}((\partial_r^su)^2+(\partial_r^{s+1}u)^2)
+s^2\int_R\frac{(\partial_\theta\partial_r^{s-1}u)^2}{r^{\mu}}
+\int_R\frac{(\tilde{f}^{(s)})^2}{r^{\mu}},
\end{align*}
where $\Lambda_s$ is as $\Lambda$ in (\ref{2.16a}) with
$\tilde{a}_{ij}, \tilde{b}_i, \tilde{c}$ replaced by
$\tilde{a}_{ij}^{(s)}, \tilde{b}_i^{(s)}, \tilde{c}^{(s)}$. For each
$s=0,1,\cdots, m$, we take $\mu=2(m-s)$ and then obtain
\begin{align*}
&\int_R\big(\frac{(\partial_r^su)^2}{r^{2(m-s)}}+\frac{1}{r^{2(m-s-1)}}
(\partial_r^{s+1}u\sin\theta+\frac1r\partial_\theta\partial_r^su\cos\theta)^2\big)\\
\le&
C_0\int_{\partial_v^+R}((\partial_r^su)^2+(\partial_r^{s+1}u)^2)
+s^2\int_R\frac{(\partial_\theta\partial_r^{s-1}u)^2}{r^{2(m-s)}}
+\int_R\frac{(\tilde{f}^{(s)})^2}{r^{2(m-s)}}.
\end{align*}
Note that
$$(\tilde{f}^{(s)})^2\le
C_s\big(\sum_{i=s-2}^sr^{2(i-s+2)}(\partial_r^if)^2
+\sum_{i=0}^{s-1}(\partial_r^iu)^2
+\sum_{i=0}^{s-2}(\partial_\theta\partial_r^iu)^2\big),$$
where $C_s$ depends on the $C^s$-norms of $\tilde{a}_{ij},
\tilde{b}_i$ and $\tilde{c}$. Hence we have
\begin{equation}\label{2.24}\begin{split}
&\int_R\big(\frac{(\partial_r^su)^2}{r^{2(m-s)}}+\frac{1}{r^{2(m-s-1)}}
(\partial_r^{s+1}u\sin\theta+\frac1r\partial_\theta\partial_r^su\cos\theta)^2\big)\\
\le&
C_0\int_{\partial_v^+R}((\partial_r^su)^2+(\partial_r^{s+1}u)^2)
+s^2\int_R\frac{(\partial_\theta\partial_r^{s-1}u)^2}{r^{2(m-s+1)}}\\
&+\int_R\big(\sum_{i=s-2}^s\frac{(\partial_r^if)^2}{r^{2(m-i-2)}}
+\sum_{i=0}^{s-1}\frac{(\partial_r^iu)^2}{r^{2(m-i)}}
+\sum_{i=0}^{s-2}\frac{(\partial_\theta\partial_r^iu)^2}{r^{2(m-i)}}\big).
\end{split}\end{equation}

Now we claim for any $k=0,1\cdots,m$
\begin{equation}\label{2.25}\begin{split}
&\int_R\big(\frac{(\partial_r^ku)^2}{r^{2(m-k)}}
+\frac{(\partial_\theta\partial_r^{k-1}u)^2}{r^{2(m-k+1)}}\big)\\
\le& C_s\big(\int_{\partial_v^+R}\sum_{i=0}^{k+1}(\partial_r^iu)^2
+\int_R\sum_{i=0}^k\frac{(\partial_r^if)^2}{r^{2(m-i-2)^+}}\big).
\end{split}\end{equation}
Note that (\ref{2.25})$_0$ is simply a part of (\ref{2.24})$_0$. Now
we assume that (\ref{2.25}) holds for $k=0, 1, \cdots, s\le m-1$ and
prove (\ref{2.25}) for $k=s+1$. By (\ref{2.24})$_s$ and
(\ref{2.25})$_0$, $\cdots$, (\ref{2.25})$_{s}$, we have
\begin{align*}
&\int_R\frac{1}{r^{2(m-s-1)}}
(\partial_r^{s+1}u\sin\theta+\frac1r\partial_\theta\partial_r^su\cos\theta)^2\\
\le& C_s\big(\int_{\partial_v^+R}\sum_{i=0}^{k+1}(\partial_r^iu)^2
+\int_R\sum_{i=0}^k\frac{(\partial_r^if)^2}{r^{2(m-i-2)^+}}\big),
\end{align*}
or
\begin{equation}\label{2.26}\begin{split}
\int_R\frac{1}{r^{2(m-s)}} &(\partial_\theta\partial_r^su)^2
\le\kappa^2\int_R\frac{1}{r^{2(m-s-1)}} (\partial_r^{s+1}u)^2\\&+
C_s\big(\int_{\partial_v^+R}\sum_{i=0}^{k+1}(\partial_r^iu)^2
+\int_R\sum_{i=0}^k\frac{(\partial_r^if)^2}{r^{2(m-i-2)^+}}\big),
\end{split}\end{equation}
where we used $|\tan\theta|\le \tan\alpha=\kappa$ for any
$|\theta|<\alpha$. Next, by (\ref{2.24})$_{s+1}$ and
(\ref{2.25})$_0$, $\cdots$, (\ref{2.25})$_{s}$, we have
\begin{equation}\label{2.27}\begin{split}
\int_R\frac{1}{r^{2(m-s-1)}} &(\partial_r^{s+1}u)^2
\le (s+1)^2\int_R\frac{1}{r^{2(m-s)}} (\partial_\theta\partial_r^su)^2\\
&+C_{s+1}\big(\int_{\partial_v^+R}\sum_{i=0}^{s+2}(\partial_r^iu)^2
+\int_R\sum_{i=0}^{s+1}\frac{(\partial_r^if)^2}{r^{2(m-i-2)^+}}\big).
\end{split}\end{equation}
If $\kappa(s+1)<1/2$, (\ref{2.26}) and (\ref{2.27}) imply
\begin{equation*}
\int_R\frac{1}{r^{2(m-s)}} (\partial_\theta\partial_r^su)^2 \le
C_{s+1}\big(\int_{\partial_v^+R}\sum_{i=0}^{s+2}(\partial_r^iu)^2
+\int_R\sum_{i=0}^{s+1}\frac{(\partial_r^if)^2}{r^{2(m-i-2)^+}}\big).
\end{equation*}
This, together with (\ref{2.27}), yields (\ref{2.25}) for $k=s+1$.
\end{proof}

Now we prove Lemma \ref{Lemma2.41}.

\begin{proof}[Proof of Lemma \ref{Lemma2.41}] We will only estimate the
$L^2$-norms of $D^\alpha u$ for $|\alpha|=m$.
We will first subtract a polynomial of an appropriate degree from
$u$.  By Lemma \ref{Lemma2.40}, if $\kappa^2a(0)$ is small, we may
find a polynomial $P$ of degree $m-1$ such that $P=0$ on
$\partial\mathcal C_\kappa\cap B_1$, any coefficient $c_k$ of
degree $k$, for $k\le m-1$, in $P$ satisfies
\begin{equation*}|c_k|\le C_k\sum_{|\alpha|\le k-2}|D^\alpha
f(0)|,\end{equation*} and
\begin{equation*}\mathcal L\big(u-P\big)=f_{m}\equiv\text{ $(m-3)$-th remainder
of }f- ({\mathcal
L}-(\partial_{yy}+a(0)\partial_{xx}))P,\end{equation*} where $C_k$
is a positive constant depending only on $\delta_m$, $\kappa$, and
the $C^{m-1}$-norms of $a, b_i$ and $c$. Then the Sobolev
embedding theorem yields
\begin{equation}\label{2.41}|c_k|\le C_{m-1}\|f\|_{H^{m-1}(\mathcal C_\kappa\cap B_1)}.
\end{equation}
Note that $f_m\in V^{m-2}(\mathcal
C_\kappa\cap B_1)$ and $\xi(u-P)\in H^1_0(B_1)$ for any cutoff
function $\xi$ in $B_1$. By Lemma \ref{Lemma1.2}, if
$\kappa^2a(0)$ is small, then $u-P\in V^m(\mathcal C_\kappa\cap
B_1)$. By Lemma \ref{Lemma2.21}, if (\ref{2.19}) holds, then
\begin{align*}
&\int_R\big(\sum_{i=0}^m\frac{(\partial_r^i(u-P))^2}{r^{2(m-i)}}
+\sum_{i=0}^{m-1}\frac{(\partial_\theta\partial_r^i(u-P))^2}{r^{2(m-i)}}\big)\\
\le& C_m\int_{\partial_v^+R}\sum_{i=0}^{m+1}(\partial_r^i(u-P))^2
+\int_R\sum_{i=0}^{m}\frac{(\partial_r^if_m)^2}{r^{2(m-i-2)^+}}.
\end{align*}
Now we apply Lemma \ref{Lemma2.11} to $\mathcal{L}(u-P)=f_m$ to get
\begin{equation*}
\int_R\sum_{|\alpha|=m}|D^\alpha(u-P)|^2 \le
C_m\big(\int_{\partial_v^+R}\sum_{i=0}^{m+1}(\partial_r^i(u-P))^2
+\int_R\sum_{i=0}^{m}\sum_{j=0}^{m-i-2}
\frac{(\partial_r^i\partial_\theta^jf_m)^2}{r^{2(m-i-2)^+}}\big).
\end{equation*}
Then we obtain by (\ref{2.41})
$$\sum_{|\alpha|=m}\|D^\alpha u\|_{L^2(\mathcal C_\kappa\cap \{x<\frac12\})}\le
C_m\big(\sum_{i=0}^{m+1}\|D^iu\|_{L^2(\mathcal C_\kappa\cap
\{x<\frac12\})}+\|f\|_{H^m(\mathcal C_\kappa\cap B_1)}\big).$$
This ends the proof.
\end{proof}

Now, we are ready to prove Theorem \ref{Theorem4.1}.

\begin{proof}[Proof of Theorem \ref{Theorem4.1}] We first note
that $c\le 0$ in $\Omega_\kappa$. For any $\delta>0$, we consider
\begin{equation}\label{3.51}
\mathcal{L}_\delta u\equiv
u_{yy}+(K+\delta)u_{xx}+b_1u_x+b_2u_y+cu=f\quad\text{in
}\Omega_\kappa. \end{equation} This is a uniformly elliptic
differential equation in $\bar{\Omega}_\kappa$. Hence there exists
a solution $u_\delta\in H^1_0(\Omega_\kappa)$. By the classical
theory of uniform elliptic differential equations, we know $u\in
C^\infty(\bar{\Omega}_\kappa\setminus\{0\})\cap
C(\bar{\Omega}_\kappa)$. In the following, we derive estimates on
$u_\delta$ independent of $\delta$. For brevity, we simply write
$u=u_\delta$.

We first estimate $u$ itself. We claim
\begin{equation}\label{3.51a}|u|_{L^\infty(\Omega_\kappa)}\le
C|f|_{L^\infty(\Omega_\kappa)}.\end{equation} To see this, we set
$$w(y)=e^{\alpha d}-e^{\alpha y},$$
where $d$ is chosen so that $d>y$ for any
$(x,y)\in\bar{\Omega}_\kappa$ and $\alpha>0$ is chosen so that
$\mathcal{L}_\delta w\le -1$. Then (\ref{3.51a}) follows from a
simple comparison of $\pm u$ with $|f|_{L^\infty(\Omega)}w$.

Next, we discuss derivatives of $u$. We note that (\ref{3.51}) is
elliptic in any subset $\Omega'$ of $\bar\Omega_\kappa$ away from
the two rays $\theta=\pm\arctan\kappa$. Then by the standard
$H^m$-estimates for solutions of elliptic differential equations
(e.g., Theorem 8.10 in \cite{Gilbarg-Trudinger1983}), we have for
any  $m\ge 2$
\begin{equation}\label{3.52} \|u\|_{H^m(\Omega')}\le
C_m\big(\|u\|_{L^2(\Omega_\kappa)}+\|f\|_{H^{m-2}(\Omega_\kappa)}\big),
\end{equation}
where $C_m$ is a positive constant depending on the distance
between $\partial\Omega'$ and the two rays
$\theta=\pm\arctan\kappa$, the ellipticity constant in $\Omega'$
and the $C^{m-2}$-norms of $K, b_i$ and $c$.

Next, we claim for any $p\in
\partial\Omega_\kappa\cap\partial\mathcal C_\kappa$ and any $m\ge 1$,
there exists a neighborhood $U$ of $p$ such that
\begin{equation}\label{3.53} \|u\|_{H^m(U\cap\Omega_\kappa)}\le
C_m\big(\|u\|_{L^2(\Omega_\kappa)}+\|f\|_{H^{m}(\Omega_\kappa)}\big),\end{equation}
where $C_m$ is a positive constant depending on the distance between
$U$ and the origin, and the $C^m$-norms of $K, b_i$ and $c$. To see
this, we introduce a transform which takes $p$ to the origin, the
ray $\theta=\arctan\kappa$ or $\theta=-\arctan\kappa$ to the
$x$-axis, and a neighborhood of $p$ in $\Omega_\kappa$ to
$D_{\varepsilon}=(-1,1)\times(0,\varepsilon)$. By Corollary
\ref{Cor3.1}, for any cutoff function $\varphi=\varphi(x)$ in
$(-1,1)$ and any $m\ge 0$, there holds
$$\|\varphi u\|_{H^m(D_\varepsilon)}\le
C_m\big(\sum_{k=0}^{m+1}\|
D^ku\|_{L^2(\partial_h^+D_\varepsilon)}+\|
u\|_{L^2(D_\varepsilon)}+\|f\|_{H^m(D_\varepsilon)}\big),$$ as long
as $\varepsilon$ is small. In fact, we may apply Corollary
\ref{Cor3.1} in $D_t=(-1,1)\times(0,t)$ for any $t\in
(\varepsilon/2, \varepsilon)$ and then integrate with respect to $t$
in $(\varepsilon/2, \varepsilon)$. Then we get
$$\|\varphi u\|_{H^m(D_{\varepsilon/2})}\le
C_m\big(\sum_{k=0}^{m+1}\|D^ku\|_{L^2(D_\varepsilon\setminus
D_{\varepsilon/2})}+\|
u\|_{L^2(D_\varepsilon)}+\|f\|_{H^m(D_\varepsilon)}\big).$$ The
first term in the right-hand side can be estimated by (\ref{3.52}).
Hence, we get (\ref{3.53}) easily for an appropriate $U$. We should
note that $U$ depends on $m$. It is obvious that $U$ does not
contain the origin.

With (\ref{3.52}) and (\ref{3.53}) and a simple covering, we obtain
for any $r>0$ and \begin{equation}\label{3.53a}
\|u\|_{H^m(\Omega_\kappa\setminus B_r)}\le
C_m(\|u\|_{L^2(\Omega_\kappa)}+\|f\|_{H^{m}(\Omega_\kappa)}),\end{equation}
where $C_m$ depends only on $r$ and the $C^m$-norms of $K$, $b_i$
and $c$. We emphasize that $C_m$ does not depend on $\delta$.

Next, we discuss the regularity of $u$ in $\Omega_\kappa\cap B_r$.
We claim for any integer $m$ there exists an
$\varepsilon=\varepsilon(m)$ such that if $\delta<\varepsilon^4$
there holds
\begin{equation}\label{3.53b}
\|u\|_{H^m(\Omega_{\kappa}\cap\{x<\varepsilon^5/2\})}\le
C_m\big(\sum_{i=0}^{m+1}\|D^iu\|_{L^2(\Omega_{\kappa}\cap\{x=\varepsilon^5/2\}})
+\|f\|_{H^m(\Omega_{\kappa}\cap\{x<\varepsilon^5/2\})}\big),\end{equation}
where $C_m$ is a positive constant depending only on $m$ and the
$C^m$-norms of $K, b_i$ and $c$. To prove this, we set
$$x=\varepsilon^5 s, \quad y=\varepsilon^4 t.$$
Then
$$|y|<\kappa x \ \Leftrightarrow\ |t|<\varepsilon \kappa s.$$
Let $v(s,t)=u(\varepsilon^5s, \varepsilon^4t)$. Then $v$ satisfies
\begin{equation*}
v_{tt}+a_{\delta}v_{ss}+\varepsilon
^3b_1v_s+\varepsilon^4b_2v_t+\varepsilon^8cv=\varepsilon^8f\quad\text{in
}\Omega_{\varepsilon\kappa}\cap B_1,
\end{equation*}
where $a_{\delta}=(K+\delta)\varepsilon^{-2}$ and $K$, $b_i$ and $c$
are evaluated at $(\varepsilon^5s, \varepsilon^4t)$. Note $K(0,0)=0$
and hence for $(s,t)\in \Omega_{\varepsilon\kappa}\cap B_1$
$$K(\varepsilon^5s,
\varepsilon^4t)\le
|DK|_{L^\infty}\sqrt{\varepsilon^{10}s^2+\varepsilon^8t^2}\le
\varepsilon^4|DK|_{L^\infty}.$$ This implies
$$(\varepsilon \kappa)^2a_{\delta}(0,0)=\kappa^2\delta,$$ and
$$
a_\delta\le
\varepsilon^2|DK|_{L^\infty}+\frac{\delta}{\varepsilon^2}\text{ in
}\Omega_{\varepsilon\kappa}\cap B_1.$$ Now we take $\varepsilon$
small so that $\kappa\varepsilon^2\le\delta_m$ and
$\kappa\varepsilon\le \kappa_m$, where $\delta_m$ and $\kappa_m$
are as in Lemma \ref{Lemma2.41}. Then if $\delta<\varepsilon^4$,
Lemma \ref{Lemma2.41} implies $v\in
H^k(\Omega_{\varepsilon\kappa})$ and
\begin{equation}\label{3.55}
\|v\|_{H^m(\Omega_{\varepsilon\kappa}\cap\{s<1/2\})}\le
C_m\big(\sum_{i=0}^{m+1}\|D^iv\|_{L^2(\Omega_{\varepsilon\kappa}\cap\{s=1/2\}})
+\|f\|_{H^m(\Omega_{\varepsilon\kappa}\cap\{s<1/2\})}\big),\end{equation}
where $C_m$ is a positive constant depending only on $m$ and the
$C^m$-norms of $K, b_i$ and $c$. Obviously, (\ref{3.55}) implies
(\ref{3.53b}). With a similar trick, we then get
\begin{equation}\label{3.53c}
\|u\|_{H^m(\Omega_{\kappa}\cap\{x<\varepsilon^5/2\})}\le
C_m\big(\sum_{i=0}^{m+1}
\|D^iu\|_{L^2(\Omega_{\kappa}\cap\{\varepsilon^5/2<x<\varepsilon^5\}})
+\|f\|_{H^m(\Omega_{\kappa}\cap\{x<\varepsilon^5\})}\big).\end{equation}

With (\ref{3.53a}) and (\ref{3.53c}), we conclude the following
result: For any integer $m$ there exists an
$\varepsilon=\varepsilon(m)$ such that the solution $u_\delta$ of
(\ref{3.51}) with $u_\delta=0$ on $\partial\Omega_\kappa$ for
$\delta<\varepsilon^4$ satisfies
\begin{equation*}
\|u_\delta\|_{H^m(\Omega_{\kappa})}\le
C_m(\|u\|_{L^2(\Omega_\kappa)}+\|f\|_{H^{m+1}(\Omega_{\kappa})}),\end{equation*}
where $C_m$ is a positive constant depending only on $m$ and the
$C^m$-norms of $K, b_i$ and $c$. With (\ref{3.51a}) and the Sobolev
embedding theorem, we obtain for any $m\ge 1$
\begin{equation*}
\|u_\delta\|_{H^m(\Omega_{\kappa})}\le
C_m\|f\|_{H^{m+1}(\Omega_{\kappa})}.\end{equation*} It is easy to
get a sequence of $\delta\to0$ and a $u\in \cap_{m=1}^\infty
H^m(\Omega_\kappa)\cap H^1_0(\Omega_\kappa)$ such that
$$u_\delta\to u\quad\text{in }H^m(\Omega_\kappa) \text{ for any
}m\text{ as }\delta\to0.$$ Therefore, $u$ is a solution of
(\ref{4.1}) and satisfies $u=0$ on $\partial\Omega_\kappa$ and
(\ref{4.4}).
\end{proof}

\begin{rmrk}\label{Remark-forTheorem4.1}
It is clear that Theorem \ref{Theorem4.1} still holds
if $\Omega_\kappa$ is replaced by $\Omega_{\kappa_1, \kappa_2}$ with
the property that $\partial\Omega_{\kappa_1,
\kappa_2}\setminus\{0\}$ is smooth and that in a small neighborhood
of the origin $\partial\Omega_{\kappa_1,\kappa_2}$ is given by
smooth functions $y=\kappa_1(x)$ and $y=\kappa_2(x)$ over a small
interval $[0, d]$ with $\kappa_1(0)=\kappa_2(0)=0$ and
$\kappa_1'(0)>0>\kappa_2'(0)$. To see this, we simply note that
there exists a smooth transform in $\bar{\Omega}_{\kappa_1,
\kappa_2}$ such that $F(\Omega_{\kappa_1, \kappa_2}\cap U)=\mathcal
C_{\kappa}\cap V$ for a positive constant $\kappa$ and neighborhoods
$U$ and $V$ of the origin.
\end{rmrk}

\begin{rmrk}\label{Remark2-forTheorem4.1} We also note that $c\le 0$ can
be replaced by $c\le\varepsilon$ for $\varepsilon>0$ sufficiently small.
This is standard for elliptic differential equations.
\end{rmrk}

\section{The Cauchy Problem in Non-smooth Hyperbolic Regions}
\label{Section-HyperbolicExistence}

In this section, we will discuss Cauchy problems for hyperbolic
equations in $\mathbb R^2$ when the initial curve has an angular
point. We will discuss uniformly hyperbolic equations here and treat
degenerate hyperbolic equations in the next section.

It is well known that the Cauchy problem for linear hyperbolic
differential equation is well-posed
in a domain whose boundary is a smooth non-characteristic curve. A
standard example of such a domain is the upper half plane. However,
we cannot apply directly results for smooth domains to non-smooth
domains. In  this
section, we will prove by hand the existence of solutions of
Cauchy problems for hyperbolic
equations if the initial curve is not smooth and has an angular point.
The
method is based on energy estimates and is particularly designed for
non-flat domains. The regularity of these solutions depends
essentially on a class of {\it compatibility conditions} of Cauchy
data and nonhomogeneous terms at angular points.

Throughout this section, we fix a function $y=\kappa(x)$ on
$\mathbb R$ with $\kappa(0)=0$ satisfying
$$\text{$y=\kappa(x)$ is
Lipschitz in $\mathbb R$ and smooth for any $x\neq 0$,}$$ and
$$\text{$y=\kappa(x)$ is strictly decreasing for $x<0$ and strictly
increasing for $x>0$.}$$ Hence for any $\tau>0$, $\kappa(x)=\tau$
has two roots, one positive and one negative. An important example
of such a function is given by $y=\kappa|x|$ for a positive
constant $\kappa$.

For a fixed positive constant  $y_0$, we set
$$\Omega_{\kappa, y_0}=\{(x,y); \kappa(x)<y<y_0\}.$$
For brevity, we simply write $\Omega$ instead of $\Omega_{\kappa,
y_0}$. We denote by $\partial_b\Omega$ and $\partial_t\Omega$ the
bottom and top boundaries of $\Omega$, i.e.,
$$\partial_b\Omega=\{(x,y);\ y=\kappa(x)<y_0\},\quad\partial_t\Omega=\{(x,y);\ \kappa(x)<{y_0},
y=y_0\}.$$ In the following, we consider
\begin{equation}\label{c1}Lu\equiv u_{yy}-(au_x)_x+b_1u_x+b_2u_y+cu=f\quad\text{in
}\Omega,\end{equation} where $a, b_1, b_2$ and $c$ are smooth
functions in $\Omega$ satisfying
\begin{equation}\label{c2} a\ge a_0\quad\text{in
}\Omega,\end{equation} for a positive constant $a_0$. Obviously,
$y=\kappa(x)$ is space-like if
\begin{equation}\label{c2a} a\kappa_x^2\le\eta_0\quad\text{on
}\partial_b\Omega,\end{equation} for a constant $\eta_0\in(0,1)$.
Our goal is to prove that the Cauchy problem of (\ref{c1}) in
$\Omega$ is well-posed for Cauchy data prescribed on
$\partial_b\Omega$.
We point out that
$\partial_b\Omega$, as an initial curve, is not smooth and
has an angular point.

For any nonnegative integers $m\ge l$, we define $H^{(m,l)}(\Omega)$
($H_{0b}^{(m,l)}(\Omega)$) to be the closure of all
$C^{\infty}(\Omega)$ functions (which vanish to all orders at
$\partial_b\Omega$), in the norm
\begin{equation*}
\parallel u\parallel^{2}_{(m,l)}=\int_{\Omega}
\sum_{j=0}^l\sum_{i=0}^{m-j}(\partial_{x}^{i}\partial_y^ju)^{2}.
\end{equation*}
Obviously, the usual Sobolev space $H^m(\Omega)$ is a subset of
$H^{(m,l)}(\Omega)$.  The $L^{2}(\Omega)$ inner product will as
usual be denoted by $(\cdot,\cdot)$. A simple calculation shows that
the formal adjoint $L^*$ of $L$ is given by
$$L^*u=u_{yy}-(au_x)_x-(b_1u)_x-(b_2u)_y+cu.$$
%In the following, we denote by $(n_1, n_2)$ the outward unit
%normal vector of $\partial\Omega$.

It is convenient to first establish an existence result for
(\ref{c1}) with homogeneous Cauchy data and with $f$ vanishing to
high order on $\partial_b\Omega$.

\begin{lemma}\label{Lemma-weak} Let $m$ be a positive integer, $a\in C^{m+1}(\bar\Omega)$
and $b_1, b_2, c\in C^m(\bar\Omega)$ satisfying (\ref{c2}) and
(\ref{c2a}). Then for any $f\in H_{0b}^{(m,0)}(\Omega)$, there
exists a $u\in H_{0b}^{(m+1,1)}(\Omega)$ satisfying
\begin{equation}\label{c3}
(u ,L^{*}v)=(f,v)\quad\text{for any $v\in C^{\infty}(\Omega)$ with
$v=v_{y}=0$ on $\partial_t\Omega$}.
\end{equation}
\end{lemma}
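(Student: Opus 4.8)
The plan is the classical duality (Hahn--Banach) construction for linear Cauchy problems, adapted to the non-smooth initial curve $\partial_b\Omega$. The heart of the matter is an a priori \emph{energy estimate} for the adjoint operator $L^{*}$ acting on functions that vanish, together with their first $y$-derivative, on the top boundary $\partial_t\Omega$; once this is available, the existence of $u$ satisfying (\ref{c3}) follows by extending a suitable linear functional and then upgrading its regularity. Note that $\partial_t\Omega=\{y=y_0\}$ is horizontal, hence automatically space-like for $L^{*}$ run backward in $y$, so the adjoint Cauchy problem from the top is the natural one to estimate.

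\textbf{Step 1: the adjoint energy estimate.} First I would prove that for every $v\in C^{\infty}(\bar\Omega)$ with $v=v_y=0$ on $\partial_t\Omega$,
\[\|v\|_{(m+1,1)}\le C\,\|L^{*}v\|_{(m,0)},\]
with $C$ depending only on $m$, $a_0$, $\eta_0$ and the relevant $C^{m+1}$- and $C^{m}$-norms of $a$ and of $b_i,c$. The base case $m=0$ is the standard energy identity: multiply $L^{*}v$ by $-v_y$, integrate over the truncated domain $\Omega^{s}=\Omega\cap\{y>s\}$, and integrate by parts. This produces the energy $E(s)=\int_{\Omega\cap\{y=s\}}(v_y^{2}+av_x^{2}+v^{2})$ on the current slice, a boundary contribution over $\partial_t\Omega$ which vanishes because $v=v_y=0$ there, a boundary contribution over $\partial_b\Omega\cap\{y>s\}$, and interior terms of lower order. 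The $\partial_b\Omega$ contribution is a quadratic form in $(v_y,v_x)$ paired against the (Lipschitz) conormal of the bottom curve, and the space-like condition (\ref{c2a}), $a\kappa_x^{2}\le\eta_0<1$, guarantees this form has a favorable sign \emph{uniformly}; crucially this remains true along the entire curve $y=\kappa(x)$, including in the limit as one approaches the corner from either side. Absorbing the lower-order terms and applying Gronwall's inequality in $s$ (decreasing from $y_0$) yields $E(s)\le C\|L^{*}v\|^{2}_{L^{2}(\Omega)}$, hence $\|v\|_{(1,1)}\le C\|L^{*}v\|_{L^{2}}$. For the higher-order estimate I would differentiate tangentially: applying $\partial_x^{j}$ gives $L^{*}(\partial_x^{j}v)=\partial_x^{j}(L^{*}v)-[\partial_x^{j},L^{*}]v$, where the commutator involves at most $j$ $x$-derivatives of $v$, and the conditions $\partial_x^{j}v=(\partial_x^{j}v)_y=0$ on $\partial_t\Omega$ are preserved. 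Running the same identity and inducting on $j=0,1,\dots,m$, absorbing the commutator terms into quantities already controlled, one obtains control of $\partial_x^{j}v$ and $\partial_y\partial_x^{j}v$ for $j\le m$; the remaining derivatives (those involving $\partial_y^{2}$ and higher) are recovered algebraically from $v_{yy}=(av_x)_x+(b_1v)_x+(b_2v)_y-cv+L^{*}v$, which converts the anisotropic bound into the full $\|v\|_{(m+1,1)}$ estimate.

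\textbf{Step 2: construction and regularity of $u$.} With Step 1 in hand, the map $L^{*}v\mapsto(f,v)$ is well defined on the subspace $\{L^{*}v: v\in C^{\infty}(\bar\Omega),\ v=v_y=0\text{ on }\partial_t\Omega\}$ (well defined since $L^{*}v=0$ forces $v=0$), and bounded there because $|(f,v)|\le\|f\|_{L^{2}}\|v\|_{L^{2}}\le\|f\|_{(m,0)}\|v\|_{(m+1,1)}\le C\|f\|_{(m,0)}\|L^{*}v\|_{(m,0)}$. By the Hahn--Banach theorem it extends to a bounded functional on $H^{(m,0)}(\Omega)$, producing a distributional solution of $(u,L^{*}v)=(f,v)$; testing against $C_c^{\infty}(\Omega)$ shows $Lu=f$ in the interior. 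To see that this $u$ in fact lies in $H^{(m+1,1)}_{0b}(\Omega)$, I would invoke the analogous \emph{forward} energy estimate, $\|w\|_{(m+1,1)}\le C\|Lw\|_{(m,0)}$ for smooth $w$ vanishing to high order on $\partial_b\Omega$, proved by exactly the multiplier argument of Step 1 run forward in $y$; approximating $u$ by such $w$'s (here one uses that $f$ vanishes to high order on $\partial_b\Omega$, so the homogeneous Cauchy data are matched and the corner compatibility conditions hold trivially) and passing to the limit gives $u\in H^{(m+1,1)}_{0b}(\Omega)$ together with $\|u\|_{(m+1,1)}\le C\|f\|_{(m,0)}$.

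\textbf{Main obstacle.} The difficulty, and the reason the smooth-domain theory cannot simply be quoted, is the angular point on $\partial_b\Omega$: the bottom curve is only Lipschitz at the origin, so its conormal jumps there and the usual integration-by-parts and trace estimates are not directly available across the corner. The decisive point is that (\ref{c2a}) is a pointwise inequality with a \emph{uniform} constant $\eta_0<1$; this forces the bottom-boundary term in the energy identity to have the correct sign all along $\partial_b\Omega$ and, via the limiting argument on the truncations $\Omega^{s}$, also at the vertex, so that no contribution is lost there. Controlling this boundary term uniformly up to and through the corner --- rather than the routine interior and tangential-derivative bookkeeping --- is the step requiring real care, and it is precisely where the geometric fact that the initial curve is space-like for the hyperbolic region is used.
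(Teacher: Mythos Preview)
Your Step 1 is fine: the multiplier $-v_y$ together with the space-like condition (\ref{c2a}) does control the $\partial_b\Omega$ boundary contribution uniformly through the corner, and tangential differentiation then yields the positive-norm estimate $\|v\|_{(m+1,1)}\le C\|L^{*}v\|_{(m,0)}$ for $v\in\widehat C^\infty(\Omega)$.

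The gap is in Step 2. From $|(f,v)|\le C\|f\|_{(m,0)}\|L^{*}v\|_{(m,0)}$, Hahn--Banach extends the functional to $H^{(m,0)}(\Omega)$, and its representative $u$ lies in the \emph{dual} of $H^{(m,0)}(\Omega)$, a negative-order space --- not in $H^{(m+1,1)}_{0b}(\Omega)$. Your proposed upgrade, ``approximate $u$ by smooth $w$'s vanishing on $\partial_b\Omega$ and use the forward estimate $\|w\|_{(m+1,1)}\le C\|Lw\|_{(m,0)}$,'' is circular: producing such approximants with $Lw$ close to $f$ is exactly the existence statement you are trying to prove. The forward a priori estimate gives uniqueness and an a priori bound, but without an independent construction (Galerkin, parabolic regularization, or approximation of the corner domain by smooth ones together with the classical theory) it does not manufacture the approximating sequence. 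You do not supply such a construction.

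The paper closes this gap by proving the dual estimate in \emph{negative} norms, $\|v\|_{(-m,0)}\le C\|L^{*}v\|_{(-m-1,-1)}$, so that Hahn--Banach on $H^{(-m-1,-1)}_{0b}(\Omega)$ yields $u$ directly in the desired space $H^{(m+1,1)}_{0b}(\Omega)$. The device is an auxiliary function $w$ built from $v$ by solving, for each $y$, a $2m$-th order ODE in $x$ with $m$ boundary conditions on $\partial_b\Omega$; this $w$ satisfies $\sum_{s=0}^{m}(-1)^{s}\lambda^{-s}\partial_x^{2s}(e^{-\lambda y}w_y)=v$ and, crucially, $\partial_x^{i}\partial_y^{j}w|_{\partial_b\Omega}=0$ for $i+j\le m$. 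Integrating $(Lw)$ against this expression and using the high-order vanishing of $w$ on $\partial_b\Omega$ (together with (\ref{c2a}) for the one surviving top-order boundary term) gives $(Lw,v)\ge C\|w\|_{(m+1,1)}^{2}$; since $\|v\|_{(-m,0)}\le C\|w\|_{(m+1,1)}$ by the very definition of $w$, the negative-norm estimate follows. This ODE construction is the missing idea in your approach.
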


We note that (\ref{c2a}) holds automatically for arbitrary
nonnegative $a$ if $\partial_b\Omega$ is a horizontal line, i.e.,
$\kappa\equiv0$. It is clear from the proof that $\eta_0$ in
(\ref{c2a}) is allowed to be 1 in Lemma \ref{Lemma-weak}.

\begin{proof} Let $\widehat{C}^{\infty}(\Omega)$ consist of all
$C^{\infty}(\Omega)$ functions $v$ with $v=v_{y}=0$ on
$\partial_t\Omega$. We consider a fixed $v\in
\widehat{C}^\infty(\Omega)$. For a large constant $\lambda$ to be
determined, we consider
\begin{align*}
\sum_{s=0}^{m}(-1)^{s}\lambda^{-s}\partial^{2s}_x\varphi&=v\quad\text{in
}\Omega,\\
\varphi=\cdots=\partial_x^{m-1}\varphi&=0\quad\text{on
}\partial_b\Omega.\end{align*} This is a boundary value problem
related to an ODE for each $y\in(0, y_0)$,  and therefore the theory
of such equations guarantees the existence of a unique solution
$\varphi\in C^{2m}(\Omega)$. Set
$$w(x,y)=\int_{\kappa|x|}^y e^{\lambda t}\varphi(x,t)dt.$$ Then
$w$ satisfies
\begin{align}\label{c6}\begin{split}
\sum_{s=0}^{m}(-1)^{s}\lambda^{-s}\partial^{2s}_x(e^{-\lambda
y}w_y)&=v\quad\text{in
}\Omega,\\
w=w_y=\partial_xw_y=\cdots=\partial_x^{m-1}w_y&=0\quad\text{on
}\partial_b\Omega.\end{split}\end{align} We note that $w$
satisfies extra boundary conditions
\begin{equation}\label{c7}
\partial_{x}^{i}\partial_{y}^{j}w|_{\partial_b\Omega}=0,\quad\text{for }i+j\leq m.
\end{equation}
To see this, we simply differentiate $w=0$ along
$\partial_b\Omega$ to get
\begin{equation}\label{c7a}n_2w_x-n_1w_y=0\quad\text{on
}\partial_b\Omega,\end{equation} where $(n_1, n_2)$ is the outward
unit normal vector of $\partial_b\Omega$. With $w_y=0$ on
$\partial_b\Omega$, we get easily $w_x=0$ on $\partial_b\Omega$. A
simple induction argument then yields (\ref{c7}). We note that
$$(n_1, n_2)=\frac1{\sqrt{1+\kappa_x^2}}(\kappa_x, -1).$$

By taking $\lambda$ sufficiently large, we claim
\begin{equation}\label{c8}
\big(Lw,\sum_{s=0}^{m}(-1)^{s}\lambda^{-s}\partial^{2s}_x(e^{-\lambda
y}w_y)\big) \geq C\| w\|_{(m+1,1)}^{2},
\end{equation}
where $C$ is a positive constant depending on $m$, $a_0$, the
$C^{m+1}$-norm of $a$ and the $C^m$-norms of $b_1, b_2$ and $c$.
To prove this, we integrate by parts each term in the left hand
side of (\ref{c8}) repeatedly with the help of (\ref{c7}). First
for $1\le s\le m$, we have
\begin{align*}
&\big(w_{yy}-(aw_x)_x,(-1)^{s}e^{-\lambda
y}\partial_{x}^{2s}w_y\big)\\
 = &\int_{\Omega}e^{-\lambda
y}\partial_x^s\partial_y^2w\partial_x^s\partial_yw+\int_{\Omega}e^{-\lambda
y}\partial^{s+1}_{x}w_y\partial_{x}^{s}(aw_x)
-\delta_{sm}\int_{\partial\Omega} e^{-\lambda
y}\partial_{x}^{s-1}\partial_{yy}w\partial_{x}^{s}\partial_ywn_{1}\\
=&\int_{\Omega} e^{-\lambda
y}\big(\frac{1}{2}\lambda(\partial_{x}^{s}\partial_yw)^{2}+\frac{1}{2}(\lambda
a-a_y)(\partial_x^{s+1}w)^2-\partial_x^s\partial_yw\partial_x(
\sum_{i=1}^{s}C_s^i\partial_{x}^{i}a
\partial_{x}^{s+1-i}w)\big)\\
& \quad+\int_{\partial\Omega} e^{-\lambda
y}\big(\frac{1}{2}a(\partial_x^{s+1}w)^2n_2+\frac12(\partial_x^s\partial_yw)^2n_{2}-\delta_{sm}
\partial_{x}^{s-1}\partial_{yy}w\partial_{x}^{s}\partial_ywn_{1}\big),\end{align*}
and
\begin{align*}
&\big(b_1w_x+b_2w_2+cw,(-1)^{s}e^{-\lambda
y}\partial_{x}^{2s}w_y\big)\\
=&\int_\Omega e^{-\lambda
y}\partial_x^s(b_1w_x+b_2w_y+cw)\partial_x^s\partial_yw.
\end{align*}
For $s=0$, we simply have $$(Lw, e^{-\lambda y}w)=\int_\Omega
e^{-\lambda y}\big(\frac12w_y^2+\frac12(\lambda
a-a_y)w_x^2+(b_1w_x+b_2w_y+cw)w_y\big).$$ By taking $\lambda$
large enough and using a Poincar\'{e} type inequality to estimate
$\|w\|_{L^{2}(\Omega)}$, we obtain
\begin{align*}
&\big(Lw,\sum_{s=0}^{m}(-1)^{s}\lambda^{-s}e^{-\lambda
y}\partial_{x}^{2s}w_y\big) \ge C\|w\|_
{(m+1,1)}^{2}\\
&+\lambda^{-m}\int_{\partial\Omega} e^{-\lambda
y}\big(\frac{1}{2}a(\partial_x^{m+1}w)^2n_2
+\frac12(\partial_x^m\partial_yw)^2n_{2}-
\partial_{x}^{m-1}\partial_{yy}w\partial_{x}^{m}\partial_ywn_{1}\big),
\end{align*}
where $\lambda$ and $C$ are positive constants depending on $m$,
$a_0$, the $C^{m+1}$-norm of $a$ and the $C^m$-norms of $b_1, b_2$
and $c$. Note that the boundary integral is nonnegative  on
$\partial_t\Omega$.  We now study the boundary integral on
$\partial_b\Omega$. We first note
$\partial^{m-1}_x\partial_yw=\partial^m_xw=0$ on $\partial_b\Omega$
by (\ref{c7}). Then by an argument as similar as in proving
(\ref{c7a}), we have
\begin{equation*}
n_1\partial_{x}^{m-1}\partial_{yy}w-n_{2}\partial_{x}^{m}\partial_yw=0,\quad
n_{1}\partial_{x}^{m}\partial_{y}w-n_{2}\partial_{x}^{m+1}w=0\quad\text{on
}\partial_b\Omega.
\end{equation*}
It follows that the boundary integral on $\partial_b\Omega$ is given
by \begin{align*}&\int_{\partial\Omega} e^{-\lambda
y}\big(\frac{1}{2}a(\partial_x^{m+1}w)^2n_2
+\frac12(\partial_x^m\partial_yw)^2n_{2}-
\partial_{x}^{m-1}\partial_{yy}w\partial_{x}^{m}\partial_ywn_{1}\big)\\
=&\frac12\int_{\partial\Omega}e^{-\lambda
y}\big(a\frac{n_1^2}{n_2^2}-1\big)(\partial_x^{m}\partial_yw)^2n_2=
\frac12\int_{\partial\Omega}e^{-\lambda
y}\big(a\kappa_x^2-1\big)(\partial_x^{m}\partial_yw)^2n_2.\end{align*}
This is nonnegative by (\ref{c2a}) and $n_2<0$ on
$\partial_b\Omega$. Then (\ref{c8}) holds.

Next we claim
\begin{equation}\label{c9}
\| v\|_{(-m,0)} \leq C\|w\|_{(m+1,1)}.
\end{equation} Here
$\|\cdot\|_{(-m,0)}$ is the norm on the dual space
$H_{0b}^{(-m,0)}(\Omega)$ of $H_{0b}^{(m,0)}(\Omega)$.  This dual
space may be obtained as the completion of $L^{2}(\Omega)$ in the
norm $\|\cdot\|_{(-m,0)}$. To get (\ref{c9}), we simply note
\begin{align*}
&\| v\|_{(-m,0)}=\sup_{z\in H_{0b}^{(m,0)}(\Omega)}\frac{|(v,z)|}{\|
z\|_{(m,0)}}\\
=&\sup_{z\in
H_{0b}^{(m,0)}(\Omega)}\frac{|(\sum_{s=0}^{m}(-1)^{s}\lambda^{-s}
\partial^{2s}_x(e^{-\lambda
y}w_y), z)|}{\| z\|_{(m,0)}} \leq C\|w\|_{(m+1,1)}.
\end{align*}

Now, a simple integration by parts yields
$$(w,L^*v)=(Lw, v)\quad\text{for any }v\in \widehat{C}^{\infty}(\Omega).$$
By (\ref{c8}), we obtain
\begin{align*}
&\|w\|_{(m+1,1)}\| L^{*}v \|_{(-m-1,-1)}\ge
(w,L^{*}v)=(Lw,v)\\
=&\big(Lw,\sum_{s=0}^{m}(-1)^{s}\lambda^{-s}\partial^{2s}_x(e^{-\lambda
y}w_y)\big)\geq C\|w\|_{(m+1,1)}^{2},
\end{align*}
and hence with (\ref{c9})
\begin{equation}\label{c10}
\|v\|_{(-m,0)}\leq C \| L^*v\|_{(-m-1,-1)}\quad\text{for any
}v\in\widehat{C}^{\infty}(\Omega).
\end{equation}
Consider the linear functional $F:
L^{*}\widehat{C}^{\infty}(\Omega)\rightarrow\mathbb{R}$ given by
\begin{equation*}
F(L^{*}v)=(f,v).
\end{equation*}
By (\ref{c10}), we have
\begin{equation*}
|F(L^{*}v)|\leq\| f\|_{(m,0)}\| v\|_{(-m,0)}\leq C\| f\|_{(m,0)} \|
L^{*}v\|_{(-m-1,-1)}.
\end{equation*}
Hence $F$ is a bounded linear functional on the subspace
$L^{*}\widehat{C}^{\infty}(\Omega)$ of $H_{0b}^{(-m-1,-1)}(\Omega)$.
Thus we can apply the Hahn-Banach Theorem to obtain a bounded
extension of $F$ defined on $H_{0b}^{(-m-1,-1)}(\Omega)$ such that
$\|F\|\le C\| f\|_{(m,0)}$. It follows that there exists a $u\in
H^{(m+1,1)}_{0b}(\Omega)$ such that
\begin{equation*}
F(z)=(u,z)\quad\text{for any }z\in H_{0b}^{(-m-1,-1)}(\Omega).
\end{equation*}
Now restrict $z$ back to $L^{*}\widehat{C}^{\infty}(\Omega)$ to
obtain (\ref{c3}).
\end{proof}

Next, we discuss the regularity of solutions in Lemma
\ref{Lemma-weak} in usual Sobolev spaces.  The Sobolev space of
square integrable derivatives up to and including order $m$ will be
denoted by $H^{m}(\Omega)$ with norm $\|\cdot\|_{m}$, and the
completion of $C^{\infty}(\Omega)$ functions which vanish to all
order at $\partial_b\Omega$ in the norm $\|\cdot\|_{m}$ will be
denoted by $H_{0b}^{m}(\Omega)$.

\begin{cor}\label{Cor-weak1} Under the hypotheses of
Lemma \ref{Lemma-weak}, if $f\in H_{0b}^{m}(\Omega)$, there exists a
unique solution $u\in H_{0b}^{m+1}(\Omega)$ of (\ref{c1}).\end{cor}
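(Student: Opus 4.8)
The plan is to bootstrap the weak solution produced by Lemma \ref{Lemma-weak} into an honest Sobolev solution by a duality/regularity argument, followed by a standard uniqueness argument via energy estimates. First I would observe that Lemma \ref{Lemma-weak}, applied with the integer $m$ replaced successively by $m, m+1, m+2, \dots$, already produces, for each such index, a function $u$ satisfying the weak identity $(u,L^{*}v)=(f,v)$ for all $v\in\widehat C^{\infty}(\Omega)$; since $f\in H_{0b}^{m}(\Omega)\subset H_{0b}^{(m,0)}(\Omega)$, such a weak solution exists and lies in $H_{0b}^{(m+1,1)}(\Omega)$. The task is therefore to upgrade the mixed regularity $H^{(m+1,1)}$ to the full Sobolev regularity $H^{m+1}$. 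The mechanism is exactly the one exploited in the proof of Lemma \ref{Lemma-weak}: the equation (\ref{c1}) lets one solve algebraically for $w_{yy}$ in terms of $w_{x x}$, $w_x$, $w_y$, $w$ and $f$, using $a\ge a_0>0$. Thus every $y$-derivative beyond the first can be traded for two $x$-derivatives at the cost of one order on $f$, and an $x$-regularity estimate on the weak solution feeds back into a full Sobolev estimate.

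Concretely, the key steps are as follows. \emph{Step 1 (tangential regularity).} Using difference quotients in the $x$-direction applied to the weak formulation (\ref{c3}) — these are legitimate since $\partial_b\Omega$ and the top boundary are preserved, after flattening $\partial_b\Omega$ near points $x\ne 0$ by a local diffeomorphism and handling the vertex via the vanishing-to-all-orders condition built into $H_{0b}$ — one shows that $\partial_x^{k}u\in H_{0b}^{(m+1-k,1)}(\Omega)$ for $0\le k\le m$, with the corresponding estimates; the structure of $L$ (the principal part $u_{yy}-(au_x)_x$ has no mixed derivative) makes the commutators harmless, and the boundary terms on $\partial_b\Omega$ are again controlled by the space-like condition (\ref{c2a}) exactly as in the display computing $\tfrac12\int_{\partial\Omega}e^{-\lambda y}(a\kappa_x^2-1)(\partial_x^m\partial_y w)^2 n_2$. \emph{Step 2 (converting to full Sobolev regularity).} Differentiate the equation $u_{yy}=a u_{xx}+b_1u_x+b_2u_y+cu-f$ (valid in the distributional sense once $u$ has enough $x$-regularity) in $x$ and $y$: by induction on the number of $y$-derivatives, $\partial_x^i\partial_y^j u$ with $i+j\le m+1$ and $j\ge 2$ is expressed through terms with at most $j-2$ additional $y$-derivatives but two more $x$-derivatives, plus lower-order terms and derivatives of $f$ up to order $m-1$. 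Since $f\in H^{m}(\Omega)$ and Step 1 supplies all the needed $x$-derivatives, this closes to give $u\in H_{0b}^{m+1}(\Omega)$ together with $\|u\|_{m+1}\le C\|f\|_{m}$. \emph{Step 3 (uniqueness).} If $Lu=0$ with $u\in H_{0b}^{m+1}(\Omega)$, the basic energy identity for $L$ — multiply by $e^{-\lambda y}u_y$, integrate over $\Omega\cap\{y<\tau\}$, and use (\ref{c2}), (\ref{c2a}) and $n_2<0$ on $\partial_b\Omega$ to discard the (favorably signed) bottom boundary term — gives, via Gronwall in $\tau$, that $u\equiv 0$. This also shows the solution produced in Step 2 is the unique one in $H_{0b}^{m+1}(\Omega)$.

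The main obstacle is Step 1, the tangential regularity near the angular point. Away from the vertex the curve $\partial_b\Omega$ is smooth and non-characteristic (space-like), so local flattening plus the standard $x$-difference-quotient argument applies verbatim; the delicate point is that the flattening diffeomorphisms degenerate as one approaches the origin, so one cannot simply patch local estimates with a fixed constant. The resolution is that the $x$-difference-quotient argument can be run \emph{globally} in $\Omega$ directly on the weak identity (\ref{c3}) — translation in $x$ maps $\Omega$ into a nearby domain $\Omega'$ and the symmetric-difference region is a thin sliver near the vertex whose contribution is absorbed because all relevant functions vanish to infinite order there — thereby avoiding any reference to boundary charts whose constants blow up. This is precisely the payoff of having set up the spaces $H_{0b}^{(m,l)}$ with the vanishing-to-all-orders condition at $\partial_b\Omega$, and it mirrors the role the degeneracy played in the elliptic case of Section \ref{Section-Elliptic}.
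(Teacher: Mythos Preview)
Your overall strategy --- bootstrap $y$-derivatives from $x$-derivatives via the equation, then prove uniqueness by an energy identity --- matches the paper, but your Step 1 is a detour the paper avoids entirely. The point you are missing is that $u\in H_{0b}^{(m+1,1)}(\Omega)$ \emph{already} gives $\partial_x^{i}u\in L^{2}$ for all $i\le m+1$ and $\partial_x^{i}\partial_y u\in L^{2}$ for all $i\le m$, by the very definition of the norm $\|\cdot\|_{(m+1,1)}$. No $x$-difference quotients are needed, and in particular the global $x$-translation argument you sketch (with its sliver which, incidentally, runs along all of $\partial_b\Omega$, not just near the corner) is unnecessary. The remark that one needs $a\ge a_0>0$ to ``solve algebraically for $w_{yy}$'' is also off: in (\ref{c1}) the coefficient of $u_{yy}$ is $1$, and the correct rearrangement is $u_{yy}=(au_x)_x-b_1u_x-b_2u_y-cu+f$.

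What the paper does instead is short. For $m=1$ it integrates (\ref{c3}) by parts once to obtain
\[
-(u_y+b_2u,\,v_y)=\big(f+(au_x)_x-b_1u_x+(b_{2,y}-c)u,\,v\big)
\]
for all $v\in\widehat C^{\infty}(\Omega)$, and then applies a standard difference-quotient argument in the \emph{$y$}-direction (interior, hence no boundary issues at all) to conclude $(u_y+b_2u)_y\in L^{2}_{\mathrm{loc}}$, so $u_{yy}\in L^{2}_{\mathrm{loc}}$ and $Lu=f$ a.e.; the equation then upgrades $u_{yy}$ to $L^{2}(\Omega)$, giving $u\in H^{2}$, and one more integration by parts recovers $u_y=0$ on $\partial_b\Omega$. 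For $m\ge 2$ the bootstrap is exactly your Step 2: with $u\in H^{2}$ satisfying (\ref{c1}) and $u\in H_{0b}^{(m+1,1)}$, $f\in H_{0b}^{m}$, one reads off $\partial_x^{i}\partial_y^{j}u\in L^{2}$ for $i+j\le m+1$ and the corresponding traces on $\partial_b\Omega$. For uniqueness the paper simply invokes (\ref{c8}) with $m=1$ (applied to $u$ itself, using $u=|\nabla u|=0$ on $\partial_b\Omega$), rather than the $e^{-\lambda y}u_y$ multiplier plus Gronwall you propose; either route works.
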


\begin{proof} Obviously,
$H_{0b}^{m}(\Omega)\subset H_{0b}^{(m,0)}(\Omega)$. Let $u\in
H_{0b}^{(m+1,1)}(\Omega)$ be the function given in Lemma
\ref{Lemma-weak} so that (\ref{c3}) holds.

We first consider $m=1$. By $u\in H_{0b}^{(2,1)}(\Omega)$, we have
$u, u_x\in H^{1}(\Omega)$ with $u=u_x=0$ on $\partial_b\Omega$ in
the $L^2(\partial_b\Omega)$ sense. We integrate by parts to obtain
\begin{equation}\label{c16}
-(u_y+b_2u, v_y)= \big(f+(au_x)_x-b_1u_x+(b_{2,y}-c)u,
v\big)\quad\text{for any
}v\in\widehat{C}^{\infty}(\Omega).\end{equation} A standard argument
using difference quotients in the $y$-direction
%(page 277 \cite{Evans})
implies $(u_y+b_2u)_y\in L^2_{\text{loc}}(\Omega)$ and
$$(u_y+b_2u)_y=f+(au_x)_x-b_1u_x+(b_{2,y}-c)u.$$
Then $u_{yy}\in L^2_{\text{loc}}(\Omega)$ and
$$u_{yy}-(au_x)_x+b_1u_x+b_2u_y+cu=f\quad\text{in }\Omega.$$
This implies easily that $u_{yy}\in L^2(\Omega)$ and hence $u\in
H^2(\Omega)$. An integration by parts of (\ref{c16}) then yields
$u_y=0$ on $\partial_b\Omega$ in the $L^2(\partial_b\Omega)$ sense.
Last, by $u=|\nabla u|=0$ on $\partial_b\Omega=0$, (\ref{c8}) with
$m=1$ yields
\begin{equation*}
\big(Lu,\sum_{s=0}^{1}(-1)^{s}\lambda^{-s}\partial^{2s}_x(e^{-\lambda
y}u_y)\big) \geq C\| u\|_{(2,1)}^{2},
\end{equation*}
from which the uniqueness follows.

Now we assume $m\ge 2$. We already proved that $u\in H^2(\Omega)$
and that (\ref{c3}) holds. We need to prove
$$\partial_x^i\partial_y^ju\in L^2(\Omega)\quad\text{for any }i+j\le
m+1,$$ and
$$\partial_x^i\partial_y^ju|_{\partial_b\Omega}=0\quad\text{for any }i+j\le
m.$$ This follows easily from (\ref{c1}), $u\in
H_{0b}^{(m+1,1)}(\Omega)$  and $f\in H_{0b}^{m}(\Omega)$.
\end{proof}

Corollary \ref{Cor-weak1} yields the existence of a regular solution
of (\ref{c1}) for homogeneous Cauchy data and $f$ vanishing to high
order on $\partial_b\Omega$.  However, our main concern is to solve
(\ref{c1}) for general $f$ and Cauchy data
\begin{equation}\label{c21}u=\varphi,\quad u_y=\psi\quad\text{on
}\partial_b\Omega.\end{equation} Since $\partial_b\Omega$ has an
angular point at the origin, there is a natural compatibility
condition which we will derive next. As $\partial_b\Omega$ is the
graph given by $y=\kappa(x)$ over $\mathbb R$, we may assume
$\varphi$ and $\psi$ are functions of $x\in \mathbb R$.

\begin{lemma}\label{Lemma-extension0} Let $m\ge 2$ be an integer
and  $\varphi\in C(\partial_b\Omega)\cap
C^m(\partial_b\Omega\setminus\{0\})$, $\psi\in
C(\partial_b\Omega)\cap C^{m-1}(\partial_b\Omega\setminus\{0\})$
and $f\in C^{m-2}(\bar\Omega)$. Suppose  (\ref{c2a}) is satisfied.
Then there exists a $u\in C^m(\bar\Omega)$ such that
\begin{equation}\label{c25} u=\varphi,\ u_y=\psi,\text{ and
}\partial^\alpha(Lu-f)=0\quad\text{ on
}\partial_b\Omega,\end{equation} for any $|\alpha|\le m-2$ if and
only if there hold compatibility conditions $\mathcal
C_{i}(\varphi,\psi,f)$ for $i=1,\cdots, m$.
\end{lemma}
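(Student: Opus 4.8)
The plan is to localize near the angular point: away from the origin $\partial_b\Omega$ is a smooth curve which, by the space-like hypothesis (\ref{c2a}), is non-characteristic for $L$, so on each of the two smooth arcs $\Gamma^{\pm}=\partial_b\Omega\cap\{\pm x>0\}$ the entire $m$-jet along the arc of any prospective $C^m(\bar\Omega)$ solution is forced, and only the matching of these two jets at the corner can fail. First I would make this forcing explicit. Writing $g_{ij}(x)$ for the value of $\partial_x^i\partial_y^j u$ at $(x,\kappa(x))$, the Cauchy data give $g_{00}=\varphi$, $g_{01}=\psi$, and I would determine the remaining $g_{ij}$ with $i+j\le m$ by induction on the order $k=i+j$: granting the components of order $\le k-1$, differentiating each identity $g_{ij}(x)=(\partial_x^i\partial_y^j u)(x,\kappa(x))$ of order $k-1$ once in $x$ produces the $k$ relations $\partial_x g_{ij}=g_{i+1,j}+\kappa_x g_{i,j+1}$, and the relations $\partial^\alpha(Lu-f)=0$ on $\partial_b\Omega$ with $|\alpha|=k-2$ contribute, modulo the already-known lower-order data, one further relation; in the tangential/normal frame of $\Gamma^{\pm}$ this $(k+1)\times(k+1)$ linear system for the order-$k$ derivatives has as its pivot at each stage the normal symbol $n_2^2-an_1^2=(1+\kappa_x^2)^{-1}(1-a\kappa_x^2)$, which is bounded away from $0$ by (\ref{c2a}), so the system is uniquely solvable and each $g_{ij}$ is an explicit universal expression in $\varphi^{(\le i+j)}$, $\psi^{(\le i+j-1)}$, the derivatives $\partial^{\le i+j-2}f$ and the derivatives of $\kappa$ --- the regularity hypotheses on $\varphi,\psi,f$ being exactly calibrated to make these well defined and smooth on each open arc. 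Denote by $g_\alpha^{+}$ and $g_\alpha^{-}$ the functions so obtained on $\Gamma^{+}$ and $\Gamma^{-}$.

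Next I would \emph{define} the compatibility conditions. For $1\le i\le m$, let $\mathcal C_i(\varphi,\psi,f)$ be the requirement that for every multi-index $\alpha$ with $|\alpha|=i$ the one-sided limits $g_\alpha^{+}(0^{+})$ and $g_\alpha^{-}(0^{-})$ both exist and agree; writing $c_\alpha$ for the common value (with $c_{00}=\varphi(0)$, $c_{01}=\psi(0)$ already available since $\varphi,\psi\in C(\partial_b\Omega)$, so that no condition is needed at order $0$), the statements $\mathcal C_1,\dots,\mathcal C_m$ together say precisely that $\{g_\alpha^{\pm}\}$ together with $\{c_\alpha\}$ at the origin forms a well-defined $m$-jet field on the closed set $\partial_b\Omega$. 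Using the relations $\partial_x g_{ij}=g_{i+1,j}+\kappa_x g_{i,j+1}$ one sees that, granting $\mathcal C_1,\dots,\mathcal C_{i-1}$, most components of $\mathcal C_i$ are automatic, so that at each order there is in effect only one genuinely new scalar relation among the derivatives of $\varphi,\psi,f$ at the origin; I would record these to write each $\mathcal C_i$ in closed form.

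Necessity is then immediate: if $u\in C^m(\bar\Omega)$ satisfies (\ref{c25}), its boundary restrictions $(\partial^\alpha u)(x,\kappa(x))$ satisfy the same recursion and the same equation-relations, hence coincide with $g_\alpha^{\pm}$ on $\Gamma^{\pm}$ by the uniqueness above, while $\partial^\alpha u$, being continuous on $\bar\Omega$, has equal one-sided limits at the corner (both equal to $(\partial^\alpha u)(0)$); so every $\mathcal C_i$ holds. For sufficiency, suppose the $\mathcal C_i$ hold. On each closed half-arc the jet field comes from genuine $C^m$ data up to the origin, where its value is the common jet $\{c_\alpha\}$; and for $p=(t,\kappa(t))\in\Gamma^{+}$ and $q=(-s,\kappa(-s))\in\Gamma^{-}$ with $t,s>0$ small, the Lipschitz bound $|\kappa(x)|\le\Lambda|x|$ gives $|p-q|\ge t+s\ge(1+\Lambda^2)^{-1/2}(|p|+|q|)$, so the two arcs approach the corner through separated cones. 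A Taylor-expansion estimate, comparing the jets at $p$ and at $q$ with the common polynomial $\sum_\alpha c_\alpha X^\alpha/\alpha!$ centered at the origin, then shows that $\{g_\alpha^{\pm};c_\alpha\}$ satisfies Whitney's condition of class $C^m$ on $\partial_b\Omega$, the matching of limits furnished by $\mathcal C_1,\dots,\mathcal C_m$ being exactly what makes the cross-corner remainders $o(|p-q|^{m-|\alpha|})$ rather than merely bounded. By the Whitney extension theorem there is $u\in C^m(\mathbb R^2)$ with $\partial^\alpha u|_{\partial_b\Omega}=g_\alpha^{\pm}$ (and $=c_\alpha$ at $0$) for all $|\alpha|\le m$; restricting to $\bar\Omega$ gives $u=\varphi$, $u_y=\psi$ on $\partial_b\Omega$, and since $\partial^\alpha(Lu-f)|_{\partial_b\Omega}$ for $|\alpha|\le m-2$ is precisely the combination of the $g_\beta^{\pm}$ that was set to zero in the construction, also $\partial^\alpha(Lu-f)=0$ on $\partial_b\Omega$ for $|\alpha|\le m-2$; this is (\ref{c25}).

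The step I expect to be the real obstacle is the book-keeping at the corner: setting up the order-$k$ linear systems and verifying, in the tangential/normal frame, that their determinants are nonzero multiples of $(1-a\kappa_x^2)$ so that the boundary jet is genuinely determined; isolating at each order the single new scalar relation not already forced by the lower-order ones, so as to state the $\mathcal C_i$ cleanly; and checking the uniform Taylor-remainder estimate entering Whitney's condition, which needs some care about the one-sided differentiability of the $g_\alpha^{\pm}$ as $x\to0$. The manipulations on each smooth arc, the continuity and uniqueness arguments, and the deduction of (\ref{c25}) from the constructed jet are all routine.
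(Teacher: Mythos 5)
Your proposal is correct, and its skeleton — differentiate the Cauchy data along $\partial_b\Omega$, use the equation and the non-characteristic hypothesis (\ref{c2a}) to determine the full boundary $m$-jet on each smooth arc $\Gamma^{\pm}$, then observe that the only obstruction to a global $C^m$ extension is the matching of the two one-sided jet limits at the corner — is exactly the paper's. Two points where you diverge, both to your advantage or at worst neutral. First, the bookkeeping: the paper uses the PDE to express every order-$k$ derivative in terms of $\partial_x^k u$ and $\partial_x^{k-1}\partial_y u$ (plus lower-order and $f$-terms) and solves a $2\times 2$ system whose determinant is computed to be $(1-a\kappa_x^2)^{k-1}$; you keep the full $(k+1)\times(k+1)$ system in the tangential/normal frame, with $k$ tangential relations and one PDE relation, pivoting on the same quantity $1-a\kappa_x^2$. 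Both work; the paper's reduction makes the nondegeneracy slightly more transparent, while yours makes it clear that only one of the $k-1$ available relations $\partial^\alpha(Lu-f)=0$, $|\alpha|=k-2$, is genuinely new at order $k$ (a point worth stating explicitly, since the others are then consistency conditions that are automatically satisfied given the lower-order recursion). Second, and more substantively, the paper establishes necessity carefully but dispatches sufficiency with ``It is easy to check that $\ldots$ there exists a $u\in C^1(\bar\Omega)\ldots$'' and the analogous inductive assertion; you supply the missing construction via Whitney extension, using the Lipschitz cone separation $|p-q|\gtrsim|p|+|q|$ for $p\in\Gamma^+$, $q\in\Gamma^-$ to verify the cross-corner Taylor-remainder estimate. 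That is the right device here, and the key observation backing it — that a jet field $C^m$ on the open arc whose components have one-sided limits at $0$ is actually a $C^m$ Whitney field on the closed arc — is standard but deserves a sentence. In short: same proof in spirit, with the sufficiency half made honest.
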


The compatibility condition $\mathcal C_{i}(\varphi,\psi,f)$  is
imposed on (one-sided) derivatives of $\varphi,\psi,f$ and $\kappa$
up to order $i$ at the origin. The formulation of such a condition
will be given in the proof below, from which it is clear that
$\mathcal C_m(\varphi,\psi,f)$ makes sense for $\varphi\in
C(\partial_b\Omega)\cap C^m(\partial_b\Omega\setminus\{0\})$,
$\psi\in C(\partial_b\Omega)\cap
C^{m-1}(\partial_b\Omega\setminus\{0\})$ and $f\in C^{m-2}(\bar
\Omega)$.

\begin{proof} First, we assume there exists a function $u\in C^1(\bar
\Omega)$ satisfying (\ref{c21}). Then a simple differentiation
yields
$$u_x+\kappa_x u_y=\varphi_x\quad\text{on
}\partial_b\Omega\setminus\{0\},$$ or
$$u_x=\varphi_x-\kappa_x \psi\quad\text{on
}\partial_b\Omega\setminus\{0\}.$$ Letting $x\to0+$ and $x\to0-$,
we have a  compatibility condition
$$\varphi_x(0+)-\kappa_x(0+)\psi(0)=\varphi_x(0-)-\kappa_x(0-)\psi(0),$$
or
\begin{equation}\label{c22}
\psi(0)\big(\kappa_x(0+)-\kappa_x(0-)\big)=\varphi_x(0+)-\varphi_x(0-).\end{equation}
If (\ref{c22}) holds, then
$$u_x(0)=-\frac{\kappa_x(0+)\varphi_x(0-)+\kappa_x(0-)\varphi_x(0+)}
{\kappa_x(0+)-\kappa_x(0-)}.$$ It is easy to check that for any
$\varphi\in C(\partial_b\Omega)\cap
C^1(\partial_b\Omega\setminus\{0\})$ and $\psi\in
C(\partial_b\Omega)$ satisfying (\ref{c22}), there exists a $u\in
C^1(\bar\Omega)$ satisfying (\ref{c21}). We denote by $\mathcal
C_{1}(\varphi,\psi,f)$ the compatibility condition (\ref{c22}),
which in fact is independent of $f$.

The discussion for higher order derivatives is more complicated.
For an integer $m\ge 2$, we assume we already derived $\mathcal
C_{i}(\varphi,\psi,f)$ for $i=1,\cdots, m-1$. Now let $u\in
C^m(\bar \Omega)$ satisfy (\ref{c25}). For any multi-index
$\alpha\in \mathbb Z^2_+$ with $|\alpha|=m-2$, a simple
calculation yields
$$\partial_x^{\alpha_1}\partial_y^{\alpha_2}u(p)=
\begin{cases}a^{\frac{\alpha_2}2}(p)\partial_x^mu(p)+\cdots
&\text{ if $\alpha_2$ is even},\\
a^{\frac{\alpha_2-1}2}(p)\partial_x^{m-1}\partial_yu(p)+\cdots
&\text{ if $\alpha_2$ is odd}, \end{cases}$$ where $\cdots$ denotes
a linear combination of derivatives of $u$ at $p$ with order $\le
m-1$ and derivatives of $f$ at $p$ with order $\le m-2$. Now we
apply $\partial_x^m$ to $u=\varphi$ and $\partial_x^{m-1}$ to
$u_y=\psi$ and  evaluate at $p\in\partial_b\Omega\setminus\{0\}$.
Then we get on $\partial_b\Omega\setminus\{0\}$
\begin{align*}
\sum_{i=0}^mC_m^i\kappa_x^i\partial_x^{m-i}\partial^i_yu|_p&=\varphi^{(m)}+\cdots,\\
\sum_{i=0}^{m-1}C_{m-1}^i\kappa_x^i\partial_x^{m-1-i}\partial^{i+1}_yu|_p&=
\psi^{(m-1)}+\cdots,\end{align*} where $\cdots$ denotes derivatives
of $u$ at $p$ with order $\le m-1$. By a simple substitution of
$\partial^\alpha u(p)$ with $\alpha_2\ge 2$, we obtain at
$p\in\partial_b\Omega\setminus\{0\}$
\begin{align*} \big(\sum_{0\le 2i\le
m}C_m^{2i}\kappa_x^{2i}a^i\big)\partial_x^mu+\big(\sum_{0\le 2i+1\le
m}C_m^{2i+1}\kappa_x^{2i+1}a^i\big)\partial_x^{m-1}\partial_yu&=
\varphi^{(m)}+\cdots,\\
\big(\sum_{0\le 2i+1\le
m-1}C_{m-1}^{2i+1}\kappa_x^{2i+1}a^{i+1}\big)\partial_x^mu+\big(\sum_{0\le
2i\le m-1}C_{m-1}^{2i}\kappa_x^{2i}a^i)\partial_x^{m-1}\partial_yu&=
\psi^{(m-1)}+\cdots,\end{align*} where $\cdots$ denotes a linear
combination of derivatives of $u$ at $p$ with order $\le m-1$ and
derivatives of $f$ at $p$ with order $\le m-2$. This is a $2\times
2$ linear system for $\partial_x^mu(p)$ and
$\partial_x^{m-1}\partial_y u(p)$. A straightforward calculation
shows that the determinate of the coefficient matrix is given by
$$(1-a\kappa_x^2)^{m-1}|_p,$$
which is nonzero by (\ref{c2a}). This implies that $\partial_x^mu$
and $\partial_x^{m-1}\partial_yu$, and hence all other $m$-th
order derivatives of $u$, at $p\in
\partial_b\Omega\setminus\{0\}$, can be expressed as a linear
combination of $\varphi^{(m)}(p)$, $\psi^{(m-1)}(p)$, derivatives
of $u$ at $p$ with order $\le m-1$ and derivatives of $f$ at $p$
with order $\le m-2$. Now we consider $p=0$. In this case, there
are four linear equations for $\partial_x^mu(0)$ and
$\partial_x^{m-1}\partial_yu(0)$ arising from $x\to0+$ and
$x\to0-$. This implies that there are two compatibility conditions
similar to (\ref{c22}) involving $\varphi^{(m)}(0+)$,
$\varphi^{(m)}(0-)$, $\psi^{(m-1)}(0+)$, $\psi^{(m-1)}(0-)$,
$\kappa^{(i)}(0+)$ and $\kappa^{(i)}(0-)$, $i=1,\cdots, m$. We
denote by $\mathcal C_m(\varphi,\psi,f)$ this compatibility
condition. If $\mathcal C_m(\varphi,\psi,f)$ is satisfied, then
$\partial_x^mu(0)$ and $\partial_x^{m-1}\partial_yu(0)$, and hence
all other $m$-th order derivatives of $u$ at $0$, can be expressed
as a linear combination of $\varphi^{(m)}(0+)$,
$\varphi^{(m)}(0-)$, $\psi^{(m-1)}(0+)$, $\psi^{(m-1)}(0-)$,
derivatives of $u$ at $0$ with order $\le m-1$ and derivatives of
$f$ at $0$ with order $\le m-2$. \end{proof}

%\begin{rmrk}\label{Remark-estimates} Let $m\ge 2$ be an integer and
%the compatibility condition $\mathcal C_{i}(\varphi,\psi,f)$ hold
%for $i=1,\cdots, m$. Then there exists  a $u\in C^m(\bar\Omega)$
%satisfying (\ref{c25}) and
%\begin{equation}\label{c26} |u|_{C^m(\Omega)}\le C\big(
%\sum_{i=0}^m|\nabla_T^i\varphi|_{L^\infty(\partial_b\Omega)}
%+\sum_{i=0}^{m-1}|\nabla_T^i\psi|_{L^\infty(\partial_b\Omega)}
%+|f|_{C^{m-2}(\partial_b\Omega)}\big),\end{equation} where
%$\nabla_T$ is the tangential derivative along $\partial_b\Omega$
%and $C$ is a positive constant depending on $a_0$, $\eta_0$ and
%the $C^{m-2}$-norms of $a, b_1, b_2$ and $c$. Moreover, if
%$\varphi\in H^{d+2}(\partial_b\Omega)$, $\psi\in
%H^{d+1}(\partial_b\Omega)$ and $f\in H^{d+1}(\Omega)$, we may also
%require
%$$\|u\|_{d+2,\Omega}\le
%C\big(\|\varphi\|_{d+2,\partial_b\Omega}+\|\psi\|_{d+1,\partial_b\Omega}
%+\|f\|_{d+1,\Omega}\big).$$
%\end{rmrk}

%It is obvious that $\partial^\alpha(Lu-f)=0$  on
%$\partial_b\Omega$ for any $|\alpha|\le m-2$ is equivalent to
%$\partial_y^i(Lu-f)=0$  on $\partial_b\Omega$ for any $i\le m-2$.

Now we are ready to solve the Cauchy problem (\ref{c1}) and
(\ref{c21}).

\begin{thrm}\label{Thrm-existence} Let $m\ge 2$ be an integer
and $\varphi\in H^{m+1}(\partial_b\Omega)$, $\psi\in
H^m(\partial_b\Omega)$ and $f\in H^m(\Omega)$. Suppose (\ref{c2}),
(\ref{c2a}) and $\mathcal C_{i}(\varphi,\psi,f)$, $i=1,\cdots, m$,
are satisfied. Then the Cauchy problem (\ref{c1}) and (\ref{c21})
admits a unique solution $u\in H^m(\Omega)$. Moreover,
\begin{equation}\label{c30} \|u\|_{m,\Omega}\le C(\|\varphi\|_{m+1, \partial_b\Omega}
+\|\psi\|_{m,\partial_b\Omega}+\|f\|_{m,\Omega}),\end{equation}
where $C$ is a positive constant depending only on $m$, $a_0$,
$\eta_0$, the $C^{m+1}$-norm of $a$ and the $C^m$-norms of
$b_1,b_2$ and $c$. \end{thrm}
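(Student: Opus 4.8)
The plan is to reduce the Cauchy problem \eqref{c1}, \eqref{c21} to the homogeneous-data situation already solved in Corollary~\ref{Cor-weak1} by subtracting off a suitable extension of the Cauchy data. For the construction of such a pre-solution, note first that since $m\ge 2$ the Sobolev embedding theorem gives $\varphi\in C^m(\partial_b\Omega\setminus\{0\})$, $\psi\in C^{m-1}(\partial_b\Omega\setminus\{0\})$ and $f\in C^{m-2}(\bar\Omega)$, so the compatibility conditions $\mathcal C_1(\varphi,\psi,f),\dots,\mathcal C_m(\varphi,\psi,f)$ are meaningful and, by hypothesis, hold. I would then carry out the construction in the proof of Lemma~\ref{Lemma-extension0} with the data taken in the indicated Sobolev spaces rather than merely in $C^m$: away from the origin one prescribes along $\partial_b\Omega$ the full normal jet of order $m+1$ of the extension, the tangential derivatives being read off from $\varphi$ and $\psi$ and the higher normal derivatives being determined recursively from the equation $Lu_0=f$, where the space-like condition \eqref{c2a} (equivalently, the nonvanishing of $(1-a\kappa_x^2)^{m-1}$) is used to invert the relevant linear systems; at the origin the two one-sided jets are matched, which is possible precisely because $\mathcal C_i(\varphi,\psi,f)$ holds for $i\le m$. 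A bounded Sobolev extension operator then produces $u_0\in H^{m+1}(\Omega)$ with
\begin{equation*}
u_0=\varphi,\quad \partial_y u_0=\psi\ \text{ on }\partial_b\Omega,\qquad
\partial^\alpha(Lu_0-f)=0\ \text{ on }\partial_b\Omega\ \text{ for }|\alpha|\le m-2,
\end{equation*}
and $\|u_0\|_{H^{m+1}(\Omega)}\le C\big(\|\varphi\|_{m+1,\partial_b\Omega}+\|\psi\|_{m,\partial_b\Omega}+\|f\|_{m,\Omega}\big)$.

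Next, put $g:=f-Lu_0$ and seek $v:=u-u_0$, so that \eqref{c1}, \eqref{c21} becomes $Lv=g$ in $\Omega$ with $v=v_y=0$ on $\partial_b\Omega$. Because $f\in H^m(\Omega)$ and $u_0\in H^{m+1}(\Omega)$ we have $g\in H^{m-1}(\Omega)$, and the vanishing of $\partial^\alpha g$ on $\partial_b\Omega$ for $|\alpha|\le m-2$ shows $g\in H^{m-1}_{0b}(\Omega)$, with $\|g\|_{m-1,\Omega}\le C\big(\|\varphi\|_{m+1,\partial_b\Omega}+\|\psi\|_{m,\partial_b\Omega}+\|f\|_{m,\Omega}\big)$. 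Since $m-1\ge 1$ and $a\in C^m$, $b_i,c\in C^{m-1}$, Corollary~\ref{Cor-weak1} (with $m$ replaced by $m-1$) yields a unique $v\in H^m_{0b}(\Omega)$ with $Lv=g$, and tracking the constants through Lemma~\ref{Lemma-weak} and the regularity step of Corollary~\ref{Cor-weak1} gives $\|v\|_{m,\Omega}\le C\|g\|_{m-1,\Omega}$. Then $u:=u_0+v\in H^m(\Omega)$ satisfies $Lu=Lu_0+g=f$; moreover $v\in H^m_{0b}(\Omega)$ with $m\ge 2$ forces $v=v_y=0$ on $\partial_b\Omega$, so $u=\varphi$ and $u_y=\psi$ on $\partial_b\Omega$, and adding the two norm bounds gives \eqref{c30}.

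For uniqueness, let $w$ be the difference of two $H^m(\Omega)$ solutions; then $Lw=0$ and $w=w_y=0$ on $\partial_b\Omega$, and differentiating $w=0$ along $\partial_b\Omega\setminus\{0\}$ also gives $w_x=0$ there, so $w$ satisfies the boundary conditions \eqref{c7} for $m=1$. As $m\ge 2$ we have $w\in H^2(\Omega)$, so the energy identity \eqref{c8} with $m=1$ applies and yields $C\|w\|_{(2,1)}^2\le\big(Lw,\sum_{s=0}^{1}(-1)^s\lambda^{-s}\partial_x^{2s}(e^{-\lambda y}w_y)\big)=0$, hence $w\equiv0$.

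The main obstacle is the first step; everything after it is a routine ``subtract an extension'' reduction together with bookkeeping of Sobolev norms. The delicate point is to realize Lemma~\ref{Lemma-extension0} quantitatively, i.e.\ to produce $u_0$ lying in $H^{m+1}(\Omega)$ with simultaneous control of both Cauchy traces and of the order-$(m-2)$ vanishing of $Lu_0-f$ along the non-smooth initial curve, with a norm estimate uniform in the data. Away from the corner this is the classical prescription of a normal jet on a smooth hypersurface, but at the origin the two one-sided jets must be consistently matched, and this is exactly what the compatibility conditions $\mathcal C_i(\varphi,\psi,f)$ and the space-like condition \eqref{c2a} guarantee.
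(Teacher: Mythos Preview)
Your existence argument is essentially the paper's: use Sobolev embedding so that Lemma~\ref{Lemma-extension0} applies, subtract the extension to reduce to homogeneous Cauchy data with a right-hand side in $H^{m-1}_{0b}(\Omega)$, and invoke Corollary~\ref{Cor-weak1}. Your uniqueness argument via \eqref{c8} with $m=1$ is also what the paper does (inside the proof of Corollary~\ref{Cor-weak1}).

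The one genuine difference is how you obtain the estimate~\eqref{c30}. You propose to track Sobolev norms through the decomposition $u=u_0+v$, which forces you to upgrade Lemma~\ref{Lemma-extension0} to a \emph{quantitative} statement producing $u_0\in H^{m+1}(\Omega)$ with $\|u_0\|_{m+1}$ controlled by the data; you correctly flag this as the delicate step, especially the matching of one-sided jets at the corner. The paper sidesteps this entirely: it uses Lemma~\ref{Lemma-extension0} only qualitatively (a $C^m$ extension with no norm control) to get existence of \emph{some} $H^m$ solution $u$, and then derives~\eqref{c30} directly by the classical energy method---multiply \eqref{c1} by $u_y$, integrate, use that $\partial_b\Omega$ is space-like by \eqref{c2a}, and iterate for higher derivatives---exactly as for a smooth initial curve. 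This is cleaner because the a~priori estimate sees only the solution, not the particular extension, so what you identify as ``the main obstacle'' never arises. Your route would also work, but it does strictly more than necessary.
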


Here and thereafter, we denote by $\|\cdot\|_{m,\Omega}$ and
$\|\cdot\|_{m,\partial_b\Omega}$ the $H^m$-norms in $\Omega$ and
$\partial_b\Omega$ respectively.

\begin{proof} By the Sobolev embedding, we have $\varphi\in
C(\partial_b\Omega)\cap C^m(\partial_b\Omega\setminus\{0\})$,
$\psi\in C(\partial_b\Omega)\cap
C^{m-1}(\partial_b\Omega\setminus\{0\})$ and $f\in
C^{m-2}(\Omega)$. Hence the compatibility condition $\mathcal
C_{i}(\varphi,\psi,f)$ makes sense for $i=1,\cdots, m$. By Lemma
\ref{Lemma-extension0}, there exists a $v\in C^m(\bar\Omega)$ such
that $v=\varphi$, $v_y=\psi$ and $\partial^\alpha(f-Lv)=0$  on
$\partial_b\Omega$ for any $|\alpha|\le m-2$. This implies
$f-Lv\in H^{m-1}_{0b}(\Omega)$. By Corollary \ref{Cor-weak1},
there exists a $w\in H^m_{0b}(\Omega)$ such that $Lw=f-Lv$. Then
$u=v+w$ is a required solution.

We note that (\ref{c30}) is the classical energy estimates. The
proof is identical to that for Cauchy problems with the initial
curve as the $x$-axis. For example, the $H^1$-estimate is based on
integrating the product of (\ref{c1}) and $u_y$. We omit details.
\end{proof}

In this paper, we only need the existence part in Theorem
\ref{Thrm-existence}. The estimate (\ref{c30}) depends on the lower
bound $a_0$ of $a$ and is not sufficient for our application.
%We include this estimate here simply for the completeness.
In the next
section, we will derive an estimate independent of $a_0$ under extra
assumptions on $a$.

\section{A Priori Estimates in the Hyperbolic
Regions}\label{Section-HyperbolicEstimates}

In this section, we will derive estimates of the solutions established
in the previous section which are independent of the hyperbolicity
constant. Such estimates will enable us to establish the existence
of solutions to the Cauchy problem for degenerate hyperbolic
equations when the initial curve has angular points.

Let $y=\kappa(x)$ and $\Omega\subset\mathbb R^2$ be as defined in
the beginning of Section \ref{Section-HyperbolicExistence}.
%Let $\Sigma_{\pm}$ be two curves given by Lipschitz functions
%$t=t_{\pm}(x)$ in $(\alpha, \beta)$ satisfying
%\begin{equation}\label{1.7.3}
%0\le t_{-}(x)<t_{+}(x)\le 1 \text{ for }x\in(\alpha,
%\beta),\end{equation} and
%$$t_{\pm}(x)=const.\text{ for large
%}x\text{ or }t_+(x)=t_-(x) \text{ for finite }x=\alpha\text{ or
%}\beta.$$ Set
%\begin{equation}\label{1.7.4}
%\Omega =\{(x, t);\ x\in (\alpha, \beta), \ t_{-}(x)<t<t_{+}(x)\}.
%\end{equation}
We consider an  equation  of the following form
\begin{equation}\label{3.4}Lu\equiv u_{yy}-aKu_{xx}
+b_1u_x+b_2u_y+cu=f\quad\text{in }\Omega
\end{equation} with the Cauchy data
\begin{equation}\label{3.4z} u=\varphi, \ u_y=\psi\quad\text{on }\partial_b\Omega,\end{equation}
where $a$, $b_1$, $b_2$, $c$ and $K$ are smooth functions
satisfying
\begin{align}\label{3.5}
&\lambda \le a \le \Lambda \quad \text{in }\Omega, \\
\label{4.7}&0< K\le 1 \quad \text{in }\Omega,\end{align} and
\begin{equation}\label{3.6q}|b_1|\le C_b\big(\sqrt{K}+|K_x|\big) \quad\text{in }\Omega,
\end{equation}
for positive constants $\lambda\le \Lambda$ and $C_b$. We always
assume that $\partial_b\Omega$ is space-like, i.e.,
\begin{equation}\label{3.7}
aK\kappa_x^2\le \eta_0,\end{equation} for a constant $\eta_0\in
(0,1)$. In the following, we also assume
\begin{equation}\label{3.6z}K_x^2\le C_K^2K_y\quad\text{in }\Omega,
\end{equation}
and
\begin{equation}\label{3.8} \big(y-\kappa(x)\big)^d\le
C_KK(x,y)\quad\text{for any }(x,y)\in\Omega,\end{equation} where
$C_K$ is a positive constant and $d$ is a positive integer. Note
that (\ref{3.6z}) implies in particular $K_y\ge 0$.

Here, $K$ is allowed to be zero along $\partial_b\Omega$. If this
happens, (\ref{3.4}) is degenerate there and (\ref{3.7}) holds
automatically. Conditions (\ref{3.6q}), (\ref{3.6z}) and (\ref{3.8})
are introduced to overcome the degeneracy. The condition (\ref{3.8})
of the  finite degree degeneracy  is essential in our arguments. It
is not clear whether results in this section still hold without this
assumption.

An example of $\Omega$ and $K$ is given by
$$\Omega=\{(x,y); \ |x|<y<1\},$$
and $$K(x,y)=y^2-x^2.$$ Obviously, (\ref{3.6z}) and (\ref{3.8})
are satisfied for $\kappa(x)=|x|$ and $d=2$.

Our intention is to derive energy estimates. We first derive an
estimate on $H^1$-norms.

\begin{lemma}\label{Lemma-L2} Let $a, b_1, b_2, c$ and $K$ be $C^d$-functions
in $\bar\Omega$ satisfying (\ref{3.5})-(\ref{3.8}) and $u$ be an
$H^{d+3}$-solution of (\ref{3.4})-(\ref{3.4z}) for $\varphi\in
H^{d+2}(\partial_b\Omega)$, $\psi\in H^{d+1}(\partial_b\Omega)$
and $f\in H^{d+1}(\Omega)$. Then
\begin{equation}\label{h1}\|u\|_{1,\Omega}
\le
C\big(\|\varphi\|_{d+2,\partial_b\Omega}+\|\psi\|_{d+1,\partial_b\Omega}
+\|f\|_{d+1,\Omega}\big),\end{equation} where $C$ is a positive
constant depending on $\lambda$, $\Lambda$, $C_b$, $C_K$, $\eta_0$
and the $C^d$-norms of $a, b_1, b_2, c$ and $K$.
\end{lemma}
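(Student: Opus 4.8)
The strategy is the classical energy method, adapted to the non-flat degenerate geometry: test (\ref{3.4}) against $u_y$ — or, more precisely, against $\rho\,u_y$ for a carefully chosen positive multiplier $\rho=\rho(x,y)$ — integrate over the truncated domains $\Omega_t=\Omega\cap\{y<t\}$, and close the resulting differential inequality by Gronwall's inequality in $t\in(0,y_0)$. The assumption that $u$ is an $H^{d+3}$-solution is used only to justify the integrations by parts; it must not survive in the final constant, which is the whole point of the lemma. First I would remove the Cauchy data: pick an extension $v$ of $(\varphi,\psi)$ with $v=\varphi$, $v_y=\psi$ on $\partial_b\Omega$ and $\|v\|_{d+2,\Omega}\le C(\|\varphi\|_{d+2,\partial_b\Omega}+\|\psi\|_{d+1,\partial_b\Omega})$, set $w=u-v$, so that $Lw=g:=f-Lv$ with $w=w_y=0$ and hence, differentiating along the curve, $w_x=0$ on $\partial_b\Omega$. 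It then suffices to bound $\|w\|_{1,\Omega}$ by a norm of $g$ no stronger than $\|g\|_{d,\Omega}$, which by construction is dominated by the right-hand side of (\ref{h1}).

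Carrying out $\int_{\Omega_t}(Lw)\,\rho w_y$ and integrating by parts in $x$ and $y$ gives, schematically,
\begin{equation*}
\begin{split}
\tfrac12\int_{\{y=t\}}\rho\big(w_y^2+aKw_x^2\big)+\mathcal B_b
&=\tfrac12\int_{\Omega_t}\rho_y\,w_y^2+\tfrac12\int_{\Omega_t}(aK\rho)_y\,w_x^2-\int_{\Omega_t}(aK\rho)_x\,w_xw_y\\
&\quad+\int_{\Omega_t}\rho\big(g\,w_y-(b_1w_x+b_2w_y+cw)\,w_y\big),
\end{split}
\end{equation*}
where the boundary term $\mathcal B_b$ over $\partial_b\Omega$ involves the factor $1-aK\kappa_x^2\ge 1-\eta_0>0$ supplied by (\ref{3.7}) — precisely the computation already carried out in the proof of Lemma \ref{Lemma-weak}; for the reduced $w$ it vanishes, and in the higher-order estimates below it is controlled, through the equation and (\ref{3.7}), in terms of $g$. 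This is the only place space-likeness of the non-smooth initial curve enters, and where $\eta_0$ enters the constant. The $g$-, $b_2$-, and $c$-terms are harmless, and by (\ref{3.6q}) the $b_1$-contribution splits into a piece bounded by $C\int_{\Omega_t}\rho(aKw_x^2+w_y^2)$ (using $K\le 1$ and $a\ge\lambda$) plus a piece of the form $\int_{\Omega_t}\rho\,|K_x|\,|w_x|\,|w_y|$.

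The heart of the matter is the anomalous term $\tfrac12\int(aK\rho)_y\,w_x^2$ together with the $(aK\rho)_x$- and $|K_x|$-cross-terms: near $\partial_b\Omega$, where $K$ degenerates, a bound of the form $K_yw_x^2\le C\,aKw_x^2$ is unavailable because $K_y/K$ blows up there, so a naive estimate would force the constant to depend on the hyperbolicity constant $\inf aK$. After writing $(aK)_y=a_yK+aK_y$, $(aK)_x=a_xK+aK_x$ and absorbing the $a_yK$- and $a_xK$-parts as above, one invokes (\ref{3.6z}) in the form $|K_x|\,|w_x|\,|w_y|\le \varepsilon K_yw_x^2+C_\varepsilon w_y^2$, so that everything reduces to dominating $\int_{\Omega_t}K_yw_x^2$. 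This is where the finite-order degeneracy (\ref{3.8}) is decisive: since $w_x$ vanishes on $\partial_b\Omega$ — and, after differentiating the equation in $x$, so do $\partial_x^{s+1}w$ for $0\le s\le d-1$ — the representation $w_x(x,y)=\int_{\kappa(x)}^{y}w_{xy}(x,s)\,ds$ allows $\int_{\Omega_t}K_yw_x^2$ to be traded for higher-$y$-derivative terms by integrating by parts in $y$ against powers of $y-\kappa(x)$; iterating, the process terminates after $d$ steps because $\partial_y^dK$ is bounded on $\bar\Omega$. Organized as a finite cascade of energy estimates for $\partial_x^sw$, $0\le s\le d$, coupled through small-constant remainders and through boundary traces of $w$ on $\partial_b\Omega$ of order $\le d+1$ (expressible via the equation and (\ref{3.7}) in terms of $g$), this bounds $\int_{\Omega_t}K_yw_x^2$ by Gronwall terms plus $C\|g\|_{d,\Omega}^2$. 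This cascade is exactly what forces the loss of $d+1$ derivatives in (\ref{h1}) and the a priori assumption $u\in H^{d+3}$.

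The step I expect to be the real obstacle is making all of this close uniformly in $\inf aK$: the multiplier $\rho$ must be chosen — bounded and bounded below, a function of $y-\kappa(x)$ whose $y$-derivative is sufficiently negative near $\partial_b\Omega$ (so that the favorable quantities produced by $\rho_y$ and by the negative part of $(aK\rho)_y$ dominate the anomalous term with no help from a lower bound on $aK$) — and one must separately recover $\int_{\Omega_t}w_x^2$, which $\int_{\Omega_t}aKw_x^2$ does not control where $K$ degenerates, out of the same cascade, together with $\int_{\Omega_t}w^2$ by a Poincar\'e inequality in $y$ (valid since $w=0$ on $\partial_b\Omega$). Once the inequality has been brought to the form $E(t)\le C\int_0^t E(s)\,ds+C\|g\|_{d,\Omega}^2$ with $E(t)$ the full $H^1$-energy on the slice $\{y=t\}$, Gronwall's inequality finishes the estimate for $w$; adding back $v$ then yields (\ref{h1}).
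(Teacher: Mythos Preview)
Your strategy diverges from the paper's at the first step, and the divergence creates precisely the obstacle you flag as ``the real obstacle.'' The paper multiplies (\ref{3.4}) not by a bounded $\rho\,u_y$ but by $2e^{-\mu y}u_y/K$ --- dividing by $K$. The energy density becomes $e^{-\mu y}\big(u_y^2/K+au_x^2\big)$, and the bulk terms are $(\mu+K_y/K)\,u_y^2/K$ and $(\mu-a_y/a)\,au_x^2$. Since $K_y\ge 0$ by (\ref{3.6z}), the $K_y/K$ contribution is \emph{favorable}: it sits on the left with a good sign, and the $b_1$ cross-term $(b_1/K)u_xu_y$ is absorbed into it via $|b_1|/\sqrt K\le C_b(C_K\sqrt{K_y/K}+1)$ from (\ref{3.6q})--(\ref{3.6z}). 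No cascade is needed, and since $a\ge\lambda$ this weighted energy controls $\int u_x^2$ directly --- the quantity you cannot reach with your bounded-$\rho$ energy $\int(w_y^2+aKw_x^2)$.

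The price of dividing by $K$ is a right-hand side $\int (f-Lv)^2/K$. This is where $d$ enters, but not through a cascade: one chooses $v$ matching $D^\alpha u$ on $\partial_b\Omega$ for all $|\alpha|\le d+1$ (not merely $(\varphi,\psi)$), so that $\partial_y^i(f-Lv)=0$ on $\partial_b\Omega$ for $i\le d-1$; then $(f-Lv)^2\le (y-\kappa(x))^d\int|\partial_y^d(f-Lv)|^2$ pointwise, and (\ref{3.8}) removes the $1/K$. The traces $D^\alpha u|_{\partial_b\Omega}$ for $|\alpha|\le d+2$ are computed from $\varphi,\psi,f$ via the equation and (\ref{3.7}), which accounts for the $d+1$ loss and the $\eta_0$-dependence. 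By contrast, your cascade contains a concrete error --- with $v$ matching only $(\varphi,\psi)$, the claim that $\partial_x^{s+1}w=0$ on $\partial_b\Omega$ for $s\ge 1$ is false (from the equation, $w_{yy}=g$ on the degenerate boundary) --- and a flawed termination argument: the problem is not that $\partial_y^dK$ is bounded ($K_y$ already is) but that $K_yw_x^2$ is not dominated by $Kw_x^2$; trading $K_y$ for $K$ by integration by parts in $y$ costs a derivative of $w$ at each step and does not close after $d$ steps.
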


We note that (\ref{h1}) exhibits a loss of derivatives and such a
loss depends on the degree to which coefficients degenerate along
the boundary.

\begin{proof}
Multiplying $2e^{-\mu y}u_y/K$ to (\ref{3.4}), we get
\begin{align*}
&\partial_y\big(e^{-\mu y}(\frac{u_y^2}K +au_x^2)\big)
-2\partial_x(e^{-\mu y}au_xu_y)+e^{-\mu
y}\big(\mu+\frac{K_y}K\big)\frac{u_y^2}K
+e^{-\mu y}\big(\mu-\frac{a_y}{a}\big) au_x^2\\
=&-2e^{-\mu y}a_xu_xu_y-2e^{-\mu y}b_2\frac{ u_y^2}K-2e^{-\mu
y}b_1\frac{u_xu_y}K -2e^{-\mu y}c\frac{ uu_y}K+2e^{-\mu
y}\frac{u_yf}K.\end{align*} By combining with
\begin{equation*}\partial_y\big(e^{-\mu
y}\frac{ u^2}K\big)+e^{-\mu
y}\big(\mu+\frac{K_y}K\big)\frac{u^2}K=2e^{-\mu
y}\frac{uu_y}K,\end{equation*} we obtain
\begin{align}\label{y.1}\begin{split}
&\partial_y\big(e^{-\mu y}(\frac{u^2}K +\frac{u_y^2}K+au_x^2)\big)
-2\partial_x(e^{-\mu y}au_xu_y) \\
&\quad+e^{-\mu y}\big(\mu+\frac{K_y}K\big)(\frac{u^2}K
+\frac{u_y^2}K)
+e^{-\mu y}\big(\mu -\frac{a_y}a\big)au_x^2\\
=&-2e^{-\mu y}a_xu_xu_y-2e^{-\mu y}\frac {b_1}K
u_xu_y\\
&\quad -2e^{-\mu y}b_2\frac{u_y^2}{K}+2e^{-\mu
y}(1-c)\frac{uu_y}K+2e^{-\mu y}\frac{u_yf}K.\end{split}\end{align}
We point out again that $\partial_yK\ge 0$ by (\ref{3.6z}). We
first consider the second term in the right hand side. By
(\ref{3.6q}) and (\ref{3.6z}), we have
$$\frac {|b_1|}{\sqrt K}\le C_b\big(C_K\sqrt{\frac{K_y}K}+1\big).$$
By the Cauchy inequality, we get \begin{align*}\big|2e^{-\mu y}\frac
{b_1}K u_xu_y\big|&=\big|2e^{-\mu y}\frac {b_1}{\sqrt
K}\cdot\frac{u_y}{\sqrt K}u_x\big|\le e^{-\mu y}\big(\varepsilon
\frac{b_1^2}{K}\cdot\frac{u_y^2}{K}+\frac1\varepsilon u_x^2\big)\\
&\le e^{-\mu y}\big(2\varepsilon
C_b^2(C_K^2\frac{K_y}{K}+1)\cdot\frac{u_y^2}{K}+\frac1\varepsilon
u_x^2\big),\end{align*} for any $\varepsilon>0$.  By choosing
$\varepsilon>0$ small enough and applying the Cauchy inequality to
other terms in the right hand side of (\ref{y.1}), we obtain
\begin{align*}
&\partial_y\big(e^{-\mu y}(\frac{u^2}K +\frac{u_y^2}K+au_x^2)\big)
-2\partial_x(e^{-\mu y}au_xu_y) \\
&\quad+(\mu-\mu_0)e^{-\mu y}(\frac{u^2}K +\frac{u_y^2}K+au_x^2)\le
e^{-\mu y}\frac{f^2}K,\end{align*} where  $\mu_0$ is a positive
constant depending only on $\inf a$, $|a|_{C^1}$,
$|b_2|_{L^\infty}$, $|c|_{L^\infty}$, $C_b$ and $C_K$. By a simple
integration, we have
\begin{align*} (\mu-\mu_0)&\int_\Omega
e^{-\mu y}\big(\frac{u^2}K+\frac{u_y^2}K+au_x^2\big)  \le
\int_\Omega e^{-\mu y}\frac{f^2}K\\
&+ \int_{\partial_b\Omega}\frac{e^{-\mu y}}{\sqrt{1+\kappa_x^2}}\
(\frac{u^2}K+\frac{u_y^2}K+au_x^2 +2a\kappa_xu_xu_y),
\end{align*}
where the integral over $\partial_t\Omega$, having the correct sign,
is already dropped. By the Cauchy inequality and (\ref{3.7}), we get
$$2a|\kappa_xu_xu_y|\le
\frac{u_y^2}{K}+{aK\kappa_x^2}\cdot au_x^2\le
\frac{u_y^2}K+au_x^2.$$ Therefore,  by (\ref{3.5}) and taking
$\mu$ large enough, we obtain
\begin{equation}\label{y.6c}\int_\Omega \big(\frac{u^2}K
+\frac{u_t^2}K+ u_x^2\big)  \le C\int_{\partial_b\Omega}
\frac{1}{\sqrt{1+\kappa_x^2}} \big(\frac{u^2}K
+\frac{u_y^2}K+u_x^2\big)+ \int_\Omega \frac{f^2}{K}.
\end{equation}
We should note that the boundary integral in the right hand side of
(\ref{y.6c}) makes sense only when $u=u_y=0$ on $\partial_b\Omega$
if $K=0$ on $\partial_b\Omega$.

To eliminate $1/K$ from the right-hand side of (\ref{y.6c}), we
introduce an auxiliary function. It is easy to see that there
exists a $v\in H^{d+2}(\Omega)$ such that
$$D^\alpha v=D^\alpha u\quad\text{on }\partial_b\Omega\ \text{ for any
}|\alpha|\le d+1,$$ and
\begin{equation}\label{y.11}\|v\|_{d+2,\Omega}\le
C\sum_{|\alpha|\le d+2}\|D^\alpha
u\|_{0,\partial_b\Omega}.\end{equation} Obviously, $v$ satisfies
$$v=\varphi, \ v_y=\psi \quad\text{on
}\partial_b\Omega,$$ and
\begin{equation}\label{y.14} \partial_y^i(f-Lv)=0 \quad\text{on
}\partial_b\Omega,\text{ for any }i=0,1,\cdots,d-1.\end{equation}
Then we have
\begin{align*} &L(u-v)=f-Lv\quad\text{in }\Omega,\\
&u-v=0,\ (u-v)_y=0\quad\text{on }\partial_b\Omega.\end{align*} By
applying (\ref{y.6c}) to $u-v$, we obtain
\begin{equation}\label{y.12}\int_\Omega
\big(\frac{(u-v)^2}K+\frac{(u_y-v_y)^2}{K}+ (u_x-v_x)^2\big) \le
C\int_\Omega \frac{(f-Lv)^2}{K}.\end{equation} With (\ref{4.7}), we
have
\begin{equation}\label{y.13}\int_\Omega \big(u^2+u_y^2+ u_x^2\big)
\le C\int_\Omega \big(v^2+v_y^2+ v_x^2\big) +C\int_\Omega
\frac{(f-Lv)^2}{K}.\end{equation} Next, we eliminate the factor
$1/K$ in the last integral in (\ref{y.13}). With (\ref{y.14}) and
(\ref{3.8}), a simple calculation yields
$$\big((f-Lv)(x,y)\big)^2\le
\big(y-\kappa(x)\big)^d\int_{\kappa(x)}^y\big(\partial_y^d(f-Lv)(x,t)\big)^2dt
\quad\text{for
any }(x,y)\in \Omega,$$ and
$$\int_\Omega \frac{(f-Lv)^2}{K}\le C\int_\Omega
\big(\partial_y^d(f-Lv)(x,s)\big)^2\le
C\big(\|f\|_{d,\Omega}+\|v\|_{d+2,\Omega}\big)^2.$$ Hence, we obtain
\begin{equation}\label{y.15}\|u\|_{1,\Omega} \le C\|v\|_{d+2,\Omega}+C\|f\|_{d,\Omega}.
\end{equation}
 With the help of
(\ref{y.11}), (\ref{3.4}) and the trace theorem, we get
\begin{align*}&\|v\|_{d+2,\Omega}\le C\sum_{|\alpha|\le d+2}\|D^\alpha u\|_{0,\partial_b\Omega}\\
\le&
C\big(\|\varphi\|_{d+2,\partial_b\Omega}+\|\psi\|_{d+1,\partial_b\Omega}
+\|f\|_{d,\partial_b\Omega}\big)\\
\le&
C\big(\|\varphi\|_{d+2,\partial_b\Omega}+\|\psi\|_{d+1,\partial_b\Omega}
+\|f\|_{d+1,\Omega}\big),\end{align*} where $C$ is a positive
constant depending only on the $C^d$-norms of $a, b_1, b_2, c$ and
$K$. This implies (\ref{h1}) easily.
\end{proof}

\begin{rmrk}\label{Remark-Lemma-L2} It is clear that we have
\begin{equation}\label{y.13a}\int_\Omega
\big(\frac{(u-v)^2}K+\frac{(u_y-v_y)^2}{K}+ (u_x-v_x)^2\big) \le
C\big(\|\varphi\|_{d+2,\partial_b\Omega}+\|\psi\|_{d+1,\partial_b\Omega}
+\|f\|_{d+1,\Omega}\big)^2.\end{equation} This will be used in the
proof of Lemma \ref{Lemma-Hs} below.
\end{rmrk}

Next, we derive estimates of derivatives of $u$.

\begin{lemma}\label{Lemma-Hs} For an integer $m\ge 1$,
let $a, b_1, b_2, c$ and $K$ be $C^{m+d-1}$-functions in
$\bar\Omega$ satisfying (\ref{3.5})-(\ref{3.8}) and $u$ be an
$H^{m+d+2}$-solution of (\ref{3.4})-(\ref{3.4z}) for $\varphi\in
H^{m+d+1}(\partial_b\Omega)$, $\psi\in H^{m+d}(\partial_b\Omega)$
and $f\in H^{m+d}(\Omega)$. Then
\begin{equation}\label{h2}\|u\|_{m,\Omega}
\le
C\big(\|\varphi\|_{m+d+1,\partial_b\Omega}+\|\psi\|_{m+d,\partial_b\Omega}
+\|f\|_{m+d,\Omega}\big),\end{equation} where $C$ is a positive
constant depending on $\lambda$, $\Lambda$, $C_b$, $C_K$, $\eta_0$
and the $C^{m+d-1}$-norms of $a, b_1, b_2, c$ and $K$.
\end{lemma}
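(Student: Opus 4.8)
The plan is to prove Lemma \ref{Lemma-Hs} by an induction on $m$, using Lemma \ref{Lemma-L2} as the base case and bootstrapping one derivative at a time. The structure is dictated by the equation \eqref{3.4}: because $u_{yy}$ is solved for in terms of $aKu_{xx}$ and lower order terms, and because $K$ degenerates only on the bottom boundary, it suffices to control $x$-derivatives $\partial_x^s u$ together with one $y$-derivative, then recover all mixed derivatives algebraically from \eqref{3.4}. So the heart of the argument is to differentiate \eqref{3.4} $s$ times in $x$, observe that $\partial_x^s u$ satisfies an equation of the same form as \eqref{3.4} with the same principal part $\partial_{yy} - aK\partial_{xx}$, the same structural bounds \eqref{3.5}--\eqref{3.8} (with possibly larger constants), and a new right-hand side $f_s$ consisting of $\partial_x^s f$ plus commutator terms involving $\partial_x^j u$ for $j \le s$, $\partial_y\partial_x^j u$ for $j \le s-1$, and derivatives of the coefficients. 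The key point making this legitimate is that the critical coefficient $b_1$ gets replaced by $\tilde b_1 = b_1 + s\,a_x K + s\,a K_x$ (roughly), and one must check that $\tilde b_1$ still obeys a bound of the type \eqref{3.6q}, namely $|\tilde b_1|\le C_b'(\sqrt K + |K_x|)$; this holds because $a_x K \le \Lambda_{C^1}\sqrt K \cdot \sqrt K$ and $a K_x$ is already of the required form up to the constant $\Lambda$. The auxiliary conditions \eqref{3.6z} and \eqref{3.8} involve only $K$, so they are untouched, and \eqref{3.7} is likewise a condition on $K$ and $\kappa$ alone.

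\textbf{Carrying out the induction.} Assume \eqref{h2} holds for $1,2,\dots,m-1$ (with the base $m=0$, i.e. \eqref{h1}, supplied by Lemma \ref{Lemma-L2}). To bound $\|u\|_{m,\Omega}$, apply Lemma \ref{Lemma-L2}'s argument to the equation satisfied by $\partial_x^{m-1}u$: this produces an auxiliary function $v_{m-1}$ matching the Cauchy data of $\partial_x^{m-1}u$ on $\partial_b\Omega$ up to order $d+1$, and one gets
\begin{equation*}
\|\partial_x^{m-1}u\|_{1,\Omega} \le C\|v_{m-1}\|_{d+2,\Omega} + C\|f_{m-1}\|_{d,\Omega}.
\end{equation*}
The trace of $\partial_x^{m-1}u$ on $\partial_b\Omega$ is controlled, via the Cauchy data \eqref{3.4z} and successive use of \eqref{3.4} to convert $y$-derivatives, by $\|\varphi\|_{m+d+1}$, $\|\psi\|_{m+d}$ and $\|f\|_{m+d}$ on $\partial_b\Omega$ (here the extra $d$ derivatives lost in Lemma \ref{Lemma-L2} accumulate against the $m-1$ already used). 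The commutator right-hand side $f_{m-1}$ contributes, after applying \eqref{3.8} to absorb the factor $1/K$ exactly as in the proof of Lemma \ref{Lemma-L2}, a term bounded by $\|f\|_{m+d-1,\Omega}$ plus lower-order norms $\|u\|_{m-1,\Omega} + \cdots$, which the inductive hypothesis estimates by the right-hand side of \eqref{h2}. Finally, \eqref{3.4} itself expresses every derivative $\partial_x^i\partial_y^j u$ with $i+j \le m$ and $j \ge 2$ in terms of $\partial_x^{i'}\partial_y^{j'}u$ with $j' \le 1$ and strictly fewer total derivatives, plus derivatives of $f$ and the coefficients; iterating this reduction and collecting, one arrives at \eqref{h2}.

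\textbf{The main obstacle} will be the bookkeeping around the degeneracy when differentiating: one must verify that at every inductive step the loss is exactly $d$ derivatives and no more, i.e. that the commutator terms $f_s$, after being divided by $K$ and integrated using the finite-order degeneracy \eqref{3.8} (writing $(f_s - L v_s)(x,y)^2 \le (y-\kappa(x))^d \int_{\kappa(x)}^y (\partial_y^d(f_s - L v_s))^2\,dt$ as in Lemma \ref{Lemma-L2}), never force us to differentiate $f$ more than $m+d$ times or $\varphi,\psi$ more than $m+d+1, m+d$ times. The subtlety is that $\partial_y^d$ acting on the commutator terms, which themselves involve $\partial_x^{m-1}u$ and its derivatives, appears to demand control of $\partial_x^{m-1}\partial_y^{d+\ell}u$; but each such $y$-derivative beyond the second is again removed by \eqref{3.4}, trading it for two $x$-derivatives and terms in $f$ — and it is precisely here that the count $m+d$ for $f$ is pinned down. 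A secondary point requiring care is ensuring $v_s$ can be chosen with $\|v_s\|_{d+2,\Omega} \le C\sum_{|\alpha|\le d+2}\|D^\alpha \partial_x^{s}u\|_{0,\partial_b\Omega}$ and that these traces, via repeated substitution from \eqref{3.4} on $\partial_b\Omega$, reduce cleanly to Cauchy data norms — routine but needing the compatibility structure implicit in $u$ being a genuine $H^{m+d+2}$ solution.
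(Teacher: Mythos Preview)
Your overall architecture is right --- differentiate in $x$, observe that $\partial_x^s u$ solves an equation with the same structural hypotheses, build a matching function $v_s$, and recover mixed derivatives from \eqref{3.4}. But the induction as you have set it up does not close for $d\ge 2$.

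The problem is in the step where you want to bound $\|f_{m-1}\|_{d,\Omega}$ (equivalently, $\|\partial_y^d(f_{m-1}-L_{m-1}v_{m-1})\|_{L^2}$) by $\|u\|_{m-1,\Omega}$ plus data. The commutator $f_{m-1}$ contains terms $\Gamma'\partial_x^i u$ and $\Gamma''\partial_x^i\partial_y u$ with $i\le m-2$; after $\partial_y^d$ these become $\partial_x^i\partial_y^{d}u$ and $\partial_x^i\partial_y^{d+1}u$, of total order up to $m+d-1$. Reducing $y$-derivatives through \eqref{3.4} does not save you: writing $\partial_x^{m-2}\partial_y^{d+1}u$ via $u_{yy}=aKu_{xx}+\cdots$ produces, among other things, a term $(aK)_y\,\partial_x^{m}u$ with a merely bounded coefficient. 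That is exactly the quantity you are trying to estimate, so the argument is circular. The factors of $K$ that appear on the highest-order pieces do not rescue the scheme because the \emph{subprincipal} terms created by differentiating the coefficients carry no such factor.

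The paper avoids this trap by changing what is carried through the induction. Instead of the unweighted norms $\|u\|_{k}$, it propagates the \emph{weighted} quantities
\[
\int_\Omega\frac{(\partial_x^s u-v_s)^2}{K}+\int_\Omega\frac{(\partial_x^s\partial_y u-\partial_y v_s)^2}{K}+\int_\Omega(\partial_x^{s+1}u-\partial_x v_s)^2,
\]
established for $s=0$ in Remark~\ref{Remark-Lemma-L2}. The key device is to split
\[
f_s-L_sv_s=\tilde f_s+\sum_{i<s}\Gamma_{si}'(\partial_x^i u-v_i)+\sum_{i<s}\Gamma_{si}''\partial_y(\partial_x^i u-v_i),
\]
where $\tilde f_s$ involves only the auxiliary functions $v_0,\dots,v_s$. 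Now $\tilde f_s$ vanishes to order $d-1$ on $\partial_b\Omega$, so \eqref{3.8} applies to it and $\partial_y^d\tilde f_s$ costs only $\|v_i\|_{d+2}$, which is data-controlled. The remaining pieces are already in the weighted form, bounded by the previous induction steps --- no $\partial_y^d$ ever touches a bare $\partial_x^i u$. This is the missing idea in your proposal.
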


\begin{proof} We prove by induction. We note that Lemma \ref{Lemma-L2}
corresponds the case $m=1$. Let $s$ be a positive integer $\le
m-1$. Apply $\partial_x^s$ to (\ref{3.4}) to get
\begin{equation}\label{q1} L_s(\partial_x^s u)=f_s,\end{equation}
where
$$L_s=\partial_{yy}-aK\partial_{xx}+\big(b_1-s(aK)_x\big)\partial_x+b_2\partial_y
+\big(c+sb_{1,x}-\frac{s(s-1)}2(aK)_{xx}\big),$$ and
\begin{align*}
f_s=&\partial_x^sf+\sum_{i=2}^{s-1}c_{s,i}'\partial_x^{s+2-i}(aK)\partial_x^iu
+\sum_{i=1}^{s-1}c_{s,i}''\partial_x^{s+1-i}b_1\partial_x^iu\\
&\quad +\sum_{i=0}^{s-1}c_{s,i}'''\partial_x^{s-i}c\partial_x^iu
+\sum_{i=0}^{s-1}c_{s,i}''''\partial_x^{s+2-i}b_2\partial_x^i\partial_yu,\end{align*}
for some constants $c_{s,i}', c_{s,i}'', c_{s,i}'''$ and
$c_{s,i}''''$. We will write
$$f_s=\partial_x^sf+\sum_{i=0}^{s-1}\Gamma_{si}'\partial_x^iu
+\sum_{i=0}^{s-1}\Gamma_{si}''\partial_x^i\partial_yu.$$ We should
note that $L_s$ has the same structure as $L$. As in the proof of
Lemma \ref{Lemma-L2}, we construct a function $v_s\in
H^{d+2}(\Omega)$ such that
$$D^\alpha v_s=D^\alpha (\partial_x^su)\quad\text{on }\partial_b\Omega \text{ for any
}|\alpha|\le d+1,$$ and
$$\|v_s\|_{d+2,\Omega}\le C\sum_{|\alpha|\le d+2}\|D^\alpha
(\partial_x^su)\|_{0,\partial_b\Omega}.$$ Similar to (\ref{y.12}),
we have
\begin{equation}\label{q.2}\int_\Omega \big(\frac{(\partial_x^su-v_s)^2}K
+\frac{(\partial_x^s\partial_yu-\partial_yv_s)^2}{K}+
(\partial_x^{s+1}u-\partial_xv_s)^2\big) \le C\int_\Omega
\frac{(f_s-L_sv_s)^2}{K},\end{equation} where $C$ is positive
constant depending only on $\inf a$, $|a|_{C^2}$, $|K|_{C^2}$,
$|b_1|_{C^1}$, $|b_2|_{C^1}$, $|c|_{L^\infty}$, $C_b$ and $C_K$.
We write
$$f_s-L_sv_s=\tilde f_s+\sum_{i=0}^{s-1}\Gamma_{si}'(\partial_x^iu-v_i)
+\sum_{i=0}^{s-1}\Gamma_{si}''\partial_y(\partial_x^iu-v_i),$$ where
$v_0,\cdots, v_{s-1}$ are constructed for $u, \cdots,
\partial_x^{s-1}u$ as $v_s$ for $\partial_x^su$, and
$$\tilde f_s=(\partial_x^sf-L_sv_s)+\sum_{i=0}^{s-1}\Gamma_{si}'v_i
+\sum_{i=0}^{s-1}\Gamma_{si}''\partial_yv_i.$$ This implies
$$\int_\Omega \frac{(f_s-L_sv_s)^2}{K}\le
C\big(\int_\Omega\frac{\tilde
f_s^2}{K}+\sum_{i=0}^{s-1}\int_\Omega\frac{(\partial_x^iu-v_i)^2}{K}
+\sum_{i=0}^{s-1}\int_\Omega\frac{(\partial_x^i\partial_yu-\partial_yv_i)^2}{K}\big),$$
where $C$ is a positive constant depending only on the $C^s$-norms
of $aK, b_1, b_2$ and $c$. Note
$$\partial_y^i\tilde f_s=0\quad\text{on }\partial_b\Omega\ \text{for
}i=0,\cdots, d-1.$$ Therefore, we get
\begin{align*}&\int_\Omega\frac{\tilde f_s^2}{K}
\le\int_\Omega\big(\partial_y^d\tilde
f\big)^2\\
\le&\int_\Omega|\partial_y^d(\partial_x^sf-L_sv_s)|^2+\sum_{i=0}^{s-1}
\int_\Omega|\partial_y^d(\Gamma_{si}'v_i)|^2
+\sum_{i=0}^{s-1}\int_\Omega|\partial_y^d(\Gamma_{si}''\partial_yv_i)|^2\\
\le& C\big(\|f\|_{s+d,\Omega}^2+\|v_s\|_{d+2,\Omega}^2
+\sum_{i=0}^{s-1}\|v_i\|_{d+1,\Omega}^2\big),\end{align*} where $C$
is a positive constant depending only on the $C^{s+d}$-norms of $aK,
b_1, b_2$ and $c$. For each $i=0,\cdots, s$, we have
\begin{align*}&\|v_i\|_{d+2,\Omega}\le C\sum_{|\alpha|\le d+2}\|D^\alpha
\partial_x^iu\|_{0,\partial_b\Omega}\\
\le
&C\big(\|\varphi\|_{i+d+2,\partial_b\Omega}+\|\psi\|_{i+d+1,\partial_b\Omega}
+\|f\|_{i+d,\partial_b\Omega}\big)\\
\le&
C\big(\|\varphi\|_{i+d+2,\partial_b\Omega}+\|\psi\|_{i+d+1,\partial_b\Omega}
+\|f\|_{i+d+1,\Omega}\big),\end{align*} where $C$ depends on the
$C^{i+d}$-norms of $a, b_1, b_2, c$ and $K$. In summary, we obtain
\begin{align}\label{q3}\begin{split}&\int_\Omega \big(\frac{(\partial_x^su-v_s)^2}K
+\frac{(\partial_y\partial_x^su-\partial_yv_s)^2}{K}+
(\partial_x^{s+1}u-\partial_xv_s)^2\big)\\ \le&
C\big(\sum_{i=0}^{s-1}\int_\Omega\big(\frac{(\partial_x^iu-v_i)^2}{K}
+\frac{(\partial_y\partial_x^iu-\partial_yv_i)^2}{K}\big)\\
&\quad
+C\big(\|\varphi\|^2_{s+d+2,\partial_b\Omega}+\|\psi\|^2_{s+d+1,\partial_b\Omega}
+\|f\|^2_{s+d+1,\Omega}\big),
\end{split}\end{align} where $C$ depends on the $C^{s+d}$-norms of $a,
b_1, b_2, c$ and $K$. By a simple induction starting from
(\ref{y.13a}), we obtain
\begin{align*}&\int_\Omega \big((\partial_x^su)^2
+(\partial_y\partial_x^su)^2+ (\partial_x^{s+1}u)^2\big)\\
\le&
C\big(\|\varphi\|^2_{s+d+2,\partial_b\Omega}+\|\psi\|^2_{s+d+1,\partial_b\Omega}
+\|f\|^2_{s+d+1,\Omega}\big).\end{align*} All other derivatives of
$u$ of order $s+1$ can be obtained from (\ref{3.4}).
\end{proof}

Now we prove the main result in this section.

\begin{thrm}\label{Thm-existenceDegen} For an integer $m\ge 2$,
let $a, b_1, b_2, c$ and $K$ be $C^{m+d-1}$-functions in
$\bar\Omega$ satisfying (\ref{3.5})-(\ref{3.8}) and $\varphi\in
H^{m+d+1}(\partial_b\Omega)$, $\psi\in H^{m+d}(\partial_b\Omega)$
and $f\in H^{m+d}(\Omega)$. If $\mathcal C_i(\varphi,\psi,f)$ holds
for $i=1,\cdots,m+d-2$, then (\ref{3.4})-(\ref{3.4z}) admits a
unique $H^{m+d+2}(\Omega)$-solution $u$ and such a $u$ satisfies
\begin{equation}\label{h3}\|u\|_{m,\Omega}
\le
C\big(\|\varphi\|_{m+d+1,\partial_b\Omega}+\|\psi\|_{m+d,\partial_b\Omega}
+\|f\|_{m+d,\Omega}\big),\end{equation} where $C$ is a positive
constant depending on $\lambda$, $\Lambda$, $C_b$, $C_K$, $\eta_0$
and the $C^{m+d-1}$-norms of $a, b_1, b_2, c$ and $K$. Moreover, if
$a, b_1, b_2, c, K$ and $f$ are $H^s(\bar \Omega)$ and $\varphi$ and
$\psi$ are $H^s(\partial_b\Omega)$ for any $s\ge 1$ and $\mathcal
C_i(\varphi,\psi,f)$ holds for any $i\ge 1$, then $u$ is smooth and
satisfies (\ref{h3}) for any $m\ge 1$. \end{thrm}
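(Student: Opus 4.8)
The plan is to reuse the two--step scheme behind Theorem~\ref{Thrm-existence} --- reduce to homogeneous Cauchy data, then solve --- while absorbing the degeneracy of $aK$ along $\partial_b\Omega$ by a vanishing--viscosity regularization, the passage to the limit being driven by the $\delta$--independent a priori bounds of Lemmas~\ref{Lemma-L2} and~\ref{Lemma-Hs}. First I would note that $aK\kappa_x^2\le\eta_0<1$ on $\partial_b\Omega$ by \eqref{3.7}, so Lemma~\ref{Lemma-extension0} applies to $L$ itself; using the hypothesis $\mathcal C_i(\varphi,\psi,f)$, $i\le m+d-2$, it produces an extension $v$ (as regular as the data allow) with $v=\varphi$, $v_y=\psi$ and $\partial^\alpha(f-Lv)=0$ on $\partial_b\Omega$ for $|\alpha|\le m+d-4$. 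Setting $g=f-Lv$, which vanishes to high order on $\partial_b\Omega$, it then suffices to solve $Lw=g$ with homogeneous Cauchy data and take $u=v+w$.

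Next I would regularize: for $\delta\in(0,\delta_0)$ put $K_\delta=K+\delta$ and consider the uniformly hyperbolic operator $L_\delta u=u_{yy}-aK_\delta u_{xx}+b_1u_x+b_2u_y+cu$, whose hyperbolicity constant is $\ge\lambda\delta$. A short check shows $K_\delta$ still satisfies \eqref{3.5}--\eqref{3.8} with the same $\lambda,\Lambda,C_b,C_K$ and the same $d$ (using $\sqrt{K_\delta}\ge\sqrt K$, $\partial_xK_\delta=\partial_xK$, $\partial_yK_\delta=\partial_yK$ and $K_\delta\ge K$), and that, since $\kappa$ is Lipschitz, $aK_\delta\kappa_x^2\le\eta_0+\Lambda\|\kappa_x\|_{L^\infty}^2\,\delta\le\eta_1<1$ once $\delta_0$ is small, so $\partial_b\Omega$ stays space-like uniformly in $\delta$. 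Since the Cauchy data of $w$ are homogeneous and $g$ vanishes to high order on $\partial_b\Omega$, no compatibility condition intervenes, and Corollary~\ref{Cor-weak1} applied to $L_\delta$ gives a solution $w_\delta\in H^{m_0}_{0b}(\Omega)$ for a suitable $m_0$. In the smooth case $w_\delta\in C^\infty(\bar\Omega)$ by the usual interior and boundary regularity theory for uniformly hyperbolic Cauchy problems, hence is admissible in the computations underlying Lemma~\ref{Lemma-Hs}.

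I would then apply Lemma~\ref{Lemma-Hs} to $w_\delta$ with $K$ replaced by $K_\delta$ to get, for every $m\ge1$,
\[
\|w_\delta\|_{m,\Omega}\le C\,\|g\|_{m+d,\Omega},
\]
with $C$ depending only on $\lambda,\Lambda,C_b,C_K,\eta_1$ and the $C^{m+d-1}$--norms of $a,b_1,b_2,c,K$, and, crucially, \emph{independent of $\delta$}. Hence $\{w_\delta\}$ is bounded in $H^m(\Omega)$; extracting a subsequence $w_{\delta_k}\rightharpoonup w$ in $H^m(\Omega)$, the limit solves $Lw=g$ ($aK_{\delta_k}\partial_{xx}w_{\delta_k}\to aK\partial_{xx}w$ distributionally) with $w=w_y=0$ on $\partial_b\Omega$ (traces pass to the limit), and inherits $\|w\|_{m,\Omega}\le C\|g\|_{m+d,\Omega}$. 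Taking $u=v+w$ and bounding $\|g\|_{m+d,\Omega}\le C(\|\varphi\|_{m+d+1,\partial_b\Omega}+\|\psi\|_{m+d,\partial_b\Omega}+\|f\|_{m+d,\Omega})$ via the construction of $v$ and the trace theorem gives \eqref{h3}. Uniqueness I would get by applying the same estimate --- now with the genuine degenerate $K$ --- to the difference of two solutions with vanishing data. Running this for all $m$ at once in the smooth case gives $u\in\bigcap_m H^m(\bar\Omega)=C^\infty(\bar\Omega)$ with \eqref{h3} for every $m$, which is the final assertion.

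For general $C^{m+d-1}$ coefficients and finite--regularity data I would first mollify $a,b_i,c,K,f,\varphi,\psi$ to smooth functions converging in the appropriate norms, noting that \eqref{3.6z} survives by Jensen's inequality for the mollifier, \eqref{3.6q} and \eqref{3.8} survive up to harmless constant factors, \eqref{3.7} is automatic in the degenerate case, and the finitely many compatibility relations $\mathcal C_i$, $i\le m+d-2$, are restored by adding small polynomial corrections at the origin; then apply the smooth case and observe that \eqref{h3}, depending on the coefficients only through their $C^{m+d-1}$--norms, is uniform along the mollification, so the weak limit furnishes the solution in the stated regularity. The step I expect to be the main obstacle is the bookkeeping of derivative orders throughout this reduction--regularization--limit chain, and in particular verifying that the finite--degeneracy conditions \eqref{3.6z} and \eqref{3.8} --- the ones responsible for the loss of $d$ derivatives in \eqref{h3} --- are preserved with constants uniform in both $\delta$ and the mollification parameter, since it is precisely their uniform validity that keeps the estimates of Lemmas~\ref{Lemma-L2}--\ref{Lemma-Hs} alive in the degenerate limit $\delta\to0$.
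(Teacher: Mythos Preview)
Your proposal is correct and follows essentially the same scheme as the paper: regularize $K\mapsto K+\delta$ so that the operator becomes strictly hyperbolic, invoke the existence theory of Section~\ref{Section-HyperbolicExistence} for the regularized problem, use the $\delta$--independent a~priori estimate of Lemma~\ref{Lemma-Hs}, and pass to the limit. The one organizational difference is that the paper keeps the original inhomogeneous data $(\varphi,\psi,f)$ throughout and instead perturbs them to $(\varphi_\varepsilon,\psi_\varepsilon,f_\varepsilon)$ so that the compatibility relations $\mathcal C_i$ hold for the regularized operator $L_\varepsilon$ (then quotes Theorem~\ref{Thrm-existence} directly), whereas you first apply Lemma~\ref{Lemma-extension0} to the degenerate $L$ to reduce to homogeneous data with a right-hand side $g$ vanishing to high order on $\partial_b\Omega$, thereby bypassing the compatibility issue for $L_\delta$ entirely. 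Both orderings lead to the same place; yours makes the ``restore compatibility'' step, which the paper leaves implicit, more concrete, at the cost of the extra mollification layer you add to handle finite-regularity coefficients.
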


\begin{proof} For a positive sequence $\varepsilon\to0$, we
consider  an  equation  of the following form
\begin{equation}\label{h4}L_\varepsilon u\equiv u_{yy}-a(K+\varepsilon)u_{xx}
+b_1u_x+b_2u_y+cu=f_\varepsilon\quad\text{in }\Omega
\end{equation} with the Cauchy data
\begin{equation}\label{h5} u=\varphi_\varepsilon, \ u_y=\psi_\varepsilon
\quad\text{on }\partial_b\Omega,\end{equation} where
$\varphi_\varepsilon$, $\psi_\varepsilon$ and $f_\varepsilon$ are
chosen so that
$$\varphi_\varepsilon\to\varphi\text{ in }H^{m+d+1}(\partial_b\Omega),\quad
\psi_\varepsilon\to\psi\text{ in }H^{m+d}(\partial_b\Omega),\quad
f_\varepsilon\to f\text{ in }H^{m+d}(\Omega),$$ and
$$C_i(\varphi_\varepsilon,\psi_\varepsilon,f_\varepsilon)\text{
holds for $L_\varepsilon$ for any $i=1,\cdots, m+d-2$}.$$ We note
that $L_\varepsilon$ in (\ref{h4}) is strictly hyperbolic in
$\bar\Omega$. By Theorem \ref{Thrm-existence}, (\ref{h4})-(\ref{h5})
admits a solution $u_\varepsilon\in H^{m+d}(\Omega)$. By Lemma
\ref{Lemma-Hs},  $u_\varepsilon$ satisfies
\begin{equation*}\|u_\varepsilon\|_{m,\Omega}
\le C\big(\|\varphi_\varepsilon\|_{m+d+1,\partial_b\Omega}
+\|\psi_\varepsilon\|_{m+d,\partial_b\Omega}
+\|f_\varepsilon\|_{m+d,\Omega}\big),\end{equation*} where $C$ is
a positive constant depending on $\lambda$, $\Lambda$, $C_b$,
$C_K$, $\eta_0$ and the $C^{m+d-1}$-norms of $a, b_1, b_2, c$ and
$K$. We finish the proof by letting $\varepsilon\to0$.
\end{proof}

\section{Proof of Theorem \ref{Theorem0.1}}\label{Section-Proof}

In this section, we will prove a result of which Theorem
\ref{Theorem0.1} is a special case.

We consider an equation  of the following form
\begin{equation}\label{r1}Lu\equiv u_{yy}+aKu_{xx}
+b_1u_x+b_2u_y+cu=f\quad\text{in }B_2\subset\mathbb R^2,
\end{equation}
where $a, b_1, b_2, c$ and $K$ are smooth in $B_2$. We always
assume
\begin{equation}\label{r2}a\ge\lambda  \quad \text{in
}B_2,\end{equation} for a positive constant $\lambda$. Concerning
$K$, we assume
\begin{align}\label{r3}\begin{split} &\text{$\{K=0\}$ consists of two
curves given by smooth functions $y=\gamma_i(x)$, }\\
&\quad\text{where $y=\gamma_1(x)$ is decreasing and
$y=\gamma_2(x)$ is increasing and}\\
&\quad\text{$\gamma_1(0)=0$, $\gamma_2(0)=0$,
$\gamma_1'(0)\neq\gamma_2'(0)$}.\end{split}\end{align} By setting
$$\kappa_1(x)=\max\{\gamma_1(x), \gamma_2(x)\},\quad \kappa_2(x)=
\min\{\gamma_1(x), \gamma_2(x)\},$$ we note that $\kappa_1(x)$ and
$\kappa_2(x)$ are smooth at any $x\neq 0$, $\kappa_i(0)=0$ and
$\kappa_1(x)>0$ and $\kappa_2(x)<0$ for any $x\neq 0$. Obviously,
$y=\kappa_1(x)$ and $y=\kappa_2(x)$ divide $B_2$ into four
regions. We denote by $\Omega_+$ and $\Omega_-$ the
union of the two regions containing the $x$-coordinate axis and the
$y$-coordinate axis, respectively. We further assume that
\begin{equation}\label{r4}
\text{$K>0$ in $\Omega_+$ and $K<0$ in $\Omega_-$}.\end{equation}
Moreover, we assume that
\begin{equation}\label{r5}K_x^2\le C_K^2|K_y|\quad\text{in }\Omega_-,
\end{equation}
and
\begin{equation}\label{r6} \big|y-\kappa(x)\big|^d\le
C_K|K(x,y)|\quad\text{for any }(x,y)\in\Omega_-,\end{equation} where
$C_K$ is a positive constant and $d$ is a positive integer.
Concerning coefficients $b_1$ and $c$, we assume
\begin{equation}\label{r7}|b_1|\le C_b\big(\sqrt{K}+|K_x|\big) \quad\text{in }\Omega,
\end{equation} for a positive constant  $C_b$ and
\begin{equation}\label{r8} c\le 0\quad\text{in
}\Omega_+.\end{equation} We note that (\ref{r5}) and (\ref{r6})
are assumed only in $\Omega_-$ and (\ref{r8}) only in $\Omega_+$.

Now we explain briefly the roles of these assumptions. The curves
$y=\gamma_1(x)$ and $y=\gamma_2(x)$ divide $B_2$ into four regions, in
two of which (\ref{r1}) is elliptic and in another two (\ref{r1}) is
hyperbolic by (\ref{r4}). For any one of the regions, the origin is
an angular point. For any hyperbolic region, the part of the
boundary containing the origin is space-like. The assumption
(\ref{r7}) is the so-called {\it Levy condition}. It is needed in
both elliptic regions and hyperbolic regions. The condition
(\ref{r8}) is used to ensure the existence of solutions of the
Dirichlet problem in elliptic regions. The assumptions (\ref{r5})
and (\ref{r6}) are needed to overcome the degeneracy in the
hyperbolic regions.

For equation (\ref{0.1}) in Theorem \ref{Theorem0.1}, we have
$K(x,y)=x^2-y^2$, $\kappa_1(x)=|x|$, $\kappa_2(x)=-|x|$ and $d=2$.

We now present a result more general than Theorem \ref{Theorem0.1}
and only formulate it for the infinite differentiability.

\begin{thrm}\label{TheoremMain} Let $a, b_1, b_2, c$ and $K$ be
smooth functions in $B_2\subset \mathbb R^2$ satisfying
(\ref{r2})-(\ref{r8}). Then for any smooth function $f$ in $B_2$,
there exists a smooth solution $u$ of (\ref{r1}) in $B_1$. Moreover,
for any nonnegative integer $s$, $u$ satisfies
\begin{equation}\label{main}\|u\|_{H^s(B_1)}\le
c_s\|f\|_{H^{s+d+3}(B_2)},\end{equation} where $c_s$ is a positive
constant depending only on $s$, $\lambda$, $C_K$, $C_b$, the
$C^1$-norm of $\gamma_i$, $i=1,2$,  and the $C^{s+d+2}$-norms of
$a, b_1, b_2, c$ and $K$.\end{thrm}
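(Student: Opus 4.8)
The plan is to split $B_1$ along the degeneracy curves $y=\gamma_1(x)$, $y=\gamma_2(x)$ into the two elliptic regions (making up $\Omega_+$) and the two hyperbolic regions (making up $\Omega_-$), solve the appropriate boundary value problem in each piece using the machinery already developed, and then argue that the pieces fit together into a single smooth function on $B_1$. The elliptic regions are handled by Theorem \ref{Theorem4.1} (via Remark \ref{Remark-forTheorem4.1}, which allows the curved boundary $y=\gamma_i(x)$ with $\gamma_1'(0)\neq\gamma_2'(0)$), solving the homogeneous Dirichlet problem there; the hyperbolic regions are handled by Theorem \ref{Thm-existenceDegen}, solving the Cauchy problem with data inherited from the elliptic solution along the degeneracy curves.

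**Step 1: The elliptic regions.** First I would note that each of the two components of $\Omega_+$ is, after an orthogonal normalization bringing its angular opening to a cone $\mathcal C_\kappa$, a domain of the type $\Omega_{\kappa_1,\kappa_2}$ in Remark \ref{Remark-forTheorem4.1}. The hypotheses (\ref{r2}), (\ref{r4}), (\ref{r7}), (\ref{r8}) are exactly (\ref{4.2}), (\ref{4.3}), $c\le 0$ of Theorem \ref{Theorem4.1} (with $K$ there playing the role of $aK$ here, which is $\ge 0$ on $\Omega_+$ and vanishes on $\partial\Omega_+\cap B_2$). Applying Theorem \ref{Theorem4.1} on a slightly enlarged domain (say inside $B_{3/2}$, with smooth artificial outer boundary) gives a smooth solution $u_+$ on $\overline{\Omega_+\cap B_{3/2}}$ with $u_+=0$ on the degeneracy curves, satisfying $\|u_+\|_{H^{m}}\le C_m\|f\|_{H^{m+1}}$.

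**Step 2: Cauchy data and the hyperbolic regions.** Since $u_+$ is smooth up to $\partial\Omega_+$, its restriction and that of $\partial_n u_+$ to each degeneracy curve $y=\gamma_i(x)$ give smooth functions $\varphi_i,\psi_i$. These serve as Cauchy data for the hyperbolic region lying on the other side of that curve. The key geometric point, already emphasized in the introduction, is that the degeneracy curves are space-like for the hyperbolic regions — more precisely, since $K=0$ on $\partial\Omega_-\cap B_2$, the condition (\ref{3.7}) $aK\kappa_x^2\le\eta_0$ holds automatically there. One must also check that the compatibility conditions $\mathcal C_i(\varphi_i,\psi_i,f)$ of Lemma \ref{Lemma-extension0} all hold at the origin: this is where the fact that $u_+$ actually solves $Lu_+=f$ (not merely has prescribed boundary values) is used, since the $\mathcal C_i$ are precisely the relations among boundary data forced by the PDE, and $u_+$, being a genuine $C^\infty$ solution on the elliptic side up to the corner, produces data automatically satisfying them. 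With (\ref{r5}), (\ref{r6}) supplying (\ref{3.6z}), (\ref{3.8}), Theorem \ref{Thm-existenceDegen} then yields a smooth solution $u_-$ in each hyperbolic region with the loss-of-derivatives estimate $\|u_-\|_{H^m}\le C(\|\varphi_i\|_{H^{m+d+1}}+\|\psi_i\|_{H^{m+d}}+\|f\|_{H^{m+d}})$; combined with the trace estimates for $u_+$ and Step 1's bound, this gives $\|u_-\|_{H^m}\le C\|f\|_{H^{m+d+3}}$, which is the source of the $s+d+3$ in (\ref{main}).

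**Step 3: Gluing.** Define $u$ to equal $u_+$ on $\Omega_+$ and $u_-$ on $\Omega_-$. The two pieces agree to infinite order along the common degeneracy curves: $u_+$ and $u_-$ have the same Cauchy data $\varphi_i,\psi_i$ by construction, and all higher normal derivatives on both sides are determined recursively from $\varphi_i,\psi_i$ and tangential derivatives via the equation $Lu=f$ (this is the substance of the determinant $(1-a\kappa_x^2)^{m-1}\neq 0$ computation inside the proof of Lemma \ref{Lemma-extension0}, applied on each side), so all derivatives match on the interface. Hence $u\in C^\infty(B_1)$ and solves (\ref{r1}) classically in $B_1$, with the estimate (\ref{main}) following from the estimates of Steps 1 and 2 after a Sobolev embedding. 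I expect the main obstacle to be the bookkeeping in Step 2 and Step 3: verifying carefully that the data extracted from the elliptic solution satisfy the compatibility conditions $\mathcal C_i$ to all orders, and that the matching of all normal derivatives across the interface is genuine rather than merely formal — i.e. that the recursive determination of normal derivatives from Cauchy data is the same computation on both sides, so the $C^\infty$ pieces truly assemble into one $C^\infty$ function. Everything else is an invocation of the four theorems already proved.
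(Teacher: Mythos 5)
Your proposal follows essentially the same route as the paper: solve the homogeneous Dirichlet problem in $\Omega_+$ via Theorem \ref{Theorem4.1} (and Remark \ref{Remark-forTheorem4.1}), take the trace of $u_+$ and its normal derivative on the degeneracy curves as Cauchy data, solve the Cauchy problem in $\Omega_{-}\cap\{0<y<1\}$ and $\Omega_-\cap\{-1<y<0\}$ via Theorem \ref{Thm-existenceDegen}, and glue. The derivative count $s+d+3$ and the use of the non-characteristic/space-like property of the interface to verify the compatibility conditions and the $C^\infty$ matching across the degeneracy curves coincide with the paper's argument; your Step~3 simply spells out the ``it is easy to see that $u$ is smooth across $\partial\Omega_+\cap B_1$'' step a bit more explicitly.
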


\begin{proof} Throughout the proof, we denote by $C_s$ a positive constant
depending only on $s$, $\lambda$, $C_K$, $C_b$, the $C^1$-norm of
$\gamma_i$, $i=1,2$,  and the $C^{s}$-norms of $a, b_1, b_2, c$
and $K$.

We first smoothen the corner of $\partial\Omega_+$ at $\partial B_2$
and consider (\ref{r1}) in $\Omega_+$. By Theorem \ref{Theorem4.1},
there exists a smooth solution $u$ of (\ref{r1}) in $\Omega_+$ with
$u=0$ on $\partial\Omega_+$. Moreover, for any integer $s\ge 1$, $u$
satisfies
\begin{equation}\label{r11}\|u\|_{H^s(\Omega_+)}\le
C_s\|f\|_{H^{s+1}(\Omega_+)}.\end{equation} By the trace theorem,
we obtain
\begin{equation}\label{r12}\sum_{|\alpha|\le s}
\|D^\alpha u\|_{L^2(\partial\Omega_+)}\le
C_{s+1}\|f\|_{H^{s+2}(\Omega_+)}.\end{equation}

Next, we assume $y=1$ intersects $y=\kappa_1(x)$ for a positive
$x$ and a negative $x$ in $B_2$. If not, we may extend $K$
appropriately outside $B_2$ to achieve this. Now we set
$$\Omega_{-1}=\Omega_-\cap \{0<y<1\},$$
and
$$\varphi=0,\quad \psi=u_y\quad\text{on }\partial_b \Omega_{-1},$$
where $\partial_b\Omega_{-1}$ is the lower portion of
$\partial\Omega_{-1}$. We consider (\ref{r1}) in $\Omega_{-1}$
with the Cauchy data
\begin{equation}\label{r13} u=\varphi, \
u_y=\psi\quad\text{on }\partial_b\Omega_{-1}.\end{equation} Since
$\varphi$ and $\psi$ are boundary values of a smooth solution $u$
in $\bar{\Omega}_+$, it is easy to check that compatibility
conditions $\mathcal C_i(\varphi, \psi, f)$ are satisfied for any
$i\ge 1$ by Lemma \ref{Lemma-extension0}. By Theorem
\ref{Thm-existenceDegen}, there exists a smooth solution $u$ of
(\ref{r1}) in $\Omega_{-1}$ satisfying (\ref{r13}). Moreover, for
any integer $s\ge 1$, $u$ satisfies
\begin{equation*}\|u\|_{H^s(\Omega_{-1})}
\le C_{s+d}\big(\|\varphi\|_{H^{s+d+1}(\partial_b\Omega_{-1})}
+\|\psi\|_{H^{s+d}(\partial_b\Omega_{-1})}
+\|f\|_{H^{s+d}(\Omega_{-1})}\big).\end{equation*} With
(\ref{r12}), we have easily
\begin{equation*}\|u\|_{H^s(\Omega_{-1})}
\le
C_{m+d+2}\big(\|f\|_{H^{s+d+3}(\Omega_+)}+\|f\|_{H^{s+d}(\Omega_{-1})}\big).
\end{equation*}
A similar argument can be applied to
$$\Omega_{-2}=\Omega_-\cap \{-1<y<0\}.$$
Therefore we obtain a function $u$ which is a smooth solution of
(\ref{r1}) in $\Omega_+\cap B_1$ and $\Omega_-\cap B_1$. It is
easy to see that $u$ is smooth across $\partial\Omega_+\cap B_1$
and  especially at the origin. The estimate (\ref{main}) also
follows easily.
\end{proof}

\begin{rmrk}\label{Remark-forTheorem5.1}
We also note that $c\le 0$ in (\ref{r8}) can
be replaced by $c\le\varepsilon$ for $\varepsilon>0$ sufficiently small.
Refer to Remark \ref{Remark2-forTheorem4.1}.
\end{rmrk}

The estimate (\ref{main})
is not sufficient for the iteration process when solving the nonlinear equations.
For this, we need a stronger estimate.

\begin{thrm}\label{TheoremMain-Linear} Let $a, b_1, b_2, c$ and $K$ be
smooth functions in $B_2\subset \mathbb R^2$ satisfying
(\ref{r2})-(\ref{r8}). Then for any smooth function $f$ in $B_2$,
there exists a smooth solution $u$ of (\ref{r1}) in $B_1$. Moreover,
for any nonnegative integer $s$, $u$ satisfies
\begin{equation}\label{main-linear}\|u\|_{H^s(B_1)}
\le c_s\left(||f||_{H^{s+d+3}(B_1)}
+\Lambda_s||f||_{H^{d+3}(B_1)}\right),
\end{equation}
where $c_s$ is a constant depending only on $s$,
$\lambda$, $C_K$, $C_b$, the
$C^1$-norm of $\gamma_i$, $i=1,2$, and where

$$\Lambda_s=\|a\|_{H^{s+d+4}(B_1)}+\sum_{i=1}^2\|b_i\|_{H^{s+d+4}(B_1)}
+\|c\|_{H^{s+d+4}(B_1)}+\|K\|_{H^{s+d+4}(B_1)}+1.$$\end{thrm}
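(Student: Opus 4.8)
The plan is to revisit the proof of Theorem \ref{TheoremMain} and keep careful track of how the constants depend on the coefficients $a, b_1, b_2, c, K$, rather than absorbing them into a single constant $C_s$. The point is that in all the energy estimates leading up to Theorem \ref{TheoremMain} — namely Theorem \ref{Theorem4.1} for the elliptic regions and Theorem \ref{Thm-existenceDegen} for the hyperbolic regions — the coefficient functions enter in a \emph{tame} way: each estimate at level $s$ is controlled by the top-order norm of $f$ at level $s$ plus a product of a \emph{fixed} (low order) norm of the solution or $f$ with a high-order norm of the coefficients. This is the classical tame estimate structure underlying Nash–Moser iteration, and the claimed inequality (\ref{main-linear}) is precisely its statement. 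So the work is bookkeeping, not new analysis.

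First I would redo Theorem \ref{Theorem4.1}. Going back through Lemma \ref{Thm3.1}, Corollary \ref{Cor3.1}, Lemma \ref{Lemma2.40}, Lemma \ref{Lemma2.21}, and the final covering argument in the proof of Theorem \ref{Theorem4.1}, one observes that every place where a $C^m$-norm of a coefficient appears multiplying a norm of $u$, the competing factor carrying $u$ is of fixed low order (at most some universal constant, coming from the structure $\partial_{yy}u = -(\cdots)$, integration by parts, and the Poincaré-type inequalities in narrow domains). Applying the standard interpolation (Gagliardo–Nirenberg) inequality $\|gh\|_{H^s}\le C(\|g\|_{H^s}\|h\|_{L^\infty}+\|g\|_{L^\infty}\|h\|_{H^s})$ and its iterates to each product term, and using the a priori $L^\infty$-bound (\ref{3.51a}) together with the base estimate (\ref{main}) to control the low-order norms of $u$ by $\|f\|_{H^{d+3}}$, one upgrades (\ref{r11}) to
\begin{equation*}
\|u\|_{H^s(\Omega_+)}\le C_s\big(\|f\|_{H^{s+1}(\Omega_+)}+\Lambda_s\|f\|_{H^{d+3}(\Omega_+)}\big).
\end{equation*}
The trace theorem then gives the corresponding tame bound for the Cauchy data $\varphi,\psi$ on $\partial_b\Omega_{-1}$, with one extra derivative lost as in (\ref{r12}).

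Next I would redo the hyperbolic estimates: Lemma \ref{Lemma-L2}, Lemma \ref{Lemma-Hs}, and Theorem \ref{Thm-existenceDegen}. The same interpolation mechanism applies — in Lemma \ref{Lemma-Hs} the commutator terms $\Gamma_{si}',\Gamma_{si}''$ and the constructed auxiliary functions $v_i$ all involve products of high-order coefficient derivatives with low-order derivatives of $u$, so Gagliardo–Nirenberg converts each into (top-order coefficient norm)$\times$(fixed low-order $u$ norm) plus (top-order $u$ norm)$\times$(fixed coefficient norm). Feeding in the tame bound on $\varphi,\psi$ just obtained, and iterating, yields
\begin{equation*}
\|u\|_{H^s(\Omega_{-1})}\le C_s\big(\|f\|_{H^{s+d+3}}+\Lambda_s\|f\|_{H^{d+3}}\big),
\end{equation*}
and likewise on $\Omega_{-2}$. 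Patching these across $\partial\Omega_+\cap B_1$ exactly as in the proof of Theorem \ref{TheoremMain} — smoothness across the degenerate set was already established there — gives the global solution $u$ on $B_1$ satisfying (\ref{main-linear}).

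\textbf{The main obstacle.} The genuinely delicate point is ensuring that the low-order norm of $u$ that multiplies $\Lambda_s$ is \emph{honestly} of bounded order, i.e. that no term of the form (high-order coefficient)$\times$(high-order $u$) survives the interpolation without one factor being taken at fixed low order. One must check this in every lemma, paying particular attention to: (i) the Poincaré-type step in Lemma \ref{Thm3.1} where $\varepsilon\sqrt M<1$ is used — the cutoff parameter $\varepsilon$ there depends on coefficient norms, and one must verify that this dependence does not feed back into the order count for $u$; (ii) the construction of the polynomial $\mathcal P_m(u)$ in Lemma \ref{Lemma2.40}, whose coefficients $c_k$ are controlled by $\sum_{|\alpha|\le k-2}|D^\alpha f(0)|$ and hence by $\|f\|_{H^{m-1}}$, which is of the right (non-top) order in $f$ since we have $s+d+3$ derivatives to spare; and (iii) the auxiliary extension $v_s$ in Lemma \ref{Lemma-Hs}, where the loss of $d$ derivatives means one needs $\|f\|_{H^{s+d+3}}$ rather than $\|f\|_{H^{s+d}}$ in the final bound — accounting correctly for this shift is what produces the exponent $s+d+3$ in (\ref{main-linear}) and the exponent $s+d+4$ in $\Lambda_s$. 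Once these three points are verified, assembling the pieces is routine.
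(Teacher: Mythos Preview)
Your proposal is correct and follows exactly the approach the paper indicates: the paper's own proof consists solely of the remark that the estimates in Sections \ref{Section-Elliptic}--\ref{Section-HyperbolicEstimates} are standard energy estimates and that (\ref{main-linear}) then follows ``with the help of interpolation inequalities,'' with all details skipped. You have simply spelled out what those details are --- tracking the coefficient dependence through each lemma and applying Gagliardo--Nirenberg-type product interpolation to put the estimates in tame form --- so your write-up is a fleshed-out version of the paper's one-sentence proof.
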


We note that all estimates in Sections
\ref{Section-Elliptic}-\ref{Section-HyperbolicEstimates}
are standard energy estimates.  Hence, we obtain (\ref{main-linear})
with the help of interpolation inequalities. We skip the details.

\section{Proof of Theorem
\ref{Theorem-Nonlinear}}\label{Sec-Iterations}

In this section, we will prove a result of which Theorem
\ref{Theorem-Nonlinear} is a special case.

Consider an equation  of the following form
\begin{equation}\label{eq-nr1}
\det(D^2u)=K(x,y)\psi(x,y,u,Du)\quad\text{in }B_1\subset\mathbb R^2,
\end{equation}
where $K$ is smooth in $B_1$ and $\psi$ is smooth
in $B_1\times\mathbb R\times\mathbb R^2$. We always
assume
\begin{equation}\label{eq-nr2}\psi\ge\lambda  \quad \text{in
}B_1\times\mathbb R\times\mathbb R^2,\end{equation}
for a positive constant $\lambda$. Concerning
$K$, we assume that $K$ satisfies (\ref{r3})-(\ref{r6}). In other words, we
assume
\begin{align}\label{eq-nr3}\begin{split} &\text{$\{K=0\}$ consists of two
curves given by smooth functions $y=\gamma_i(x)$, }\\
&\quad\text{where $y=\gamma_1(x)$ is decreasing and
$y=\gamma_2(x)$ is increasing and}\\
&\quad\text{$\gamma_1(0)=0$, $\gamma_2(0)=0$,
$\gamma_1'(0)\neq\gamma_2'(0)$}.\end{split}\end{align} By setting
$$\kappa_1(x)=\max\{\gamma_1(x), \gamma_2(x)\},\quad \kappa_2(x)=
\min\{\gamma_1(x), \gamma_2(x)\},$$ we note that $\kappa_1(x)$ and
$\kappa_2(x)$ are smooth at any $x\neq 0$, $\kappa_i(0)=0$ and
$\kappa_1(x)>0$ and $\kappa_2(x)<0$ for any $x\neq 0$. Obviously,
$y=\kappa_1(x)$ and $y=\kappa_2(x)$ divide $B_1$ into four
regions. We denote by $\Omega_+$ and $\Omega_-$ the
union of the two regions containing the $x$-coordinate axis and the
$y$-coordinate axis, respectively. We further assume
\begin{equation}\label{eq-nr4}
\text{$K>0$ in $\Omega_+$ and $K<0$ in $\Omega_-$}.\end{equation}
Moreover, we assume
\begin{equation}\label{eq-nr5}K_x^2\le C_K^2|K_y|\quad\text{in }\Omega_-,
\end{equation}
and
\begin{equation}\label{eq-nr6} \big|y-\kappa(x)\big|^d\le
C_K|K(x,y)|\quad\text{for any }(x,y)\in\Omega_-,\end{equation} where
$C_K$ is a positive constant and $d$ is a positive integer.

We now point out the difference between the assumptions on $K$ for (\ref{r1})
and (\ref{eq-nr1}). For linear equations having the specific form of
(\ref{r1}), the conditions on $K$ are assumed with respect to this
{\it particular} coordinate system. However, the Monge-Amp\`{e}re operator is
invariant by orthogonal transformations. Hence, conditions on $K$ for
(\ref{eq-nr1}) in this section are assumed in {\it some} coordinate system.

We now present a result more general than Theorem \ref{Theorem-Nonlinear}
and only formulate it for the case of infinite differentiability.

\begin{thrm}\label{TheoremMain-Nonlinear} Let $\psi$ be
a smooth function satisfying (\ref{eq-nr2}) and let
$K$ be a smooth function in $B_1$ satisfying
(\ref{eq-nr3})-(\ref{eq-nr6}).
Then there exists a smooth solution
$u$ of (\ref{eq-nr1}) in $B_r$ for some $r\in (0,1)$.\end{thrm}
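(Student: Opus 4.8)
The plan is to solve the nonlinear Monge-Amp\`ere equation \eqref{eq-nr1} by a Nash-Moser iteration built on the linear theory of Theorem \ref{TheoremMain-Linear}. The first step is to produce a good approximate solution near the origin. Since $K$ vanishes to finite order across the two transverse curves $y=\gamma_i(x)$, after an orthogonal change of coordinates (permissible because the Monge-Amp\`ere operator is invariant under such transformations) the zero set of $K$ looks like $\{|x|=|y|\}$ to leading order, and $K$ has the sign pattern of $x^2-y^2$. I would first construct a polynomial (or formal power series, truncated) $u_0$ such that $\det(D^2 u_0) - K\psi(\cdot,u_0,Du_0)$ vanishes to high order at the origin; this is done by matching Taylor coefficients, exploiting $\psi\ge\lambda>0$ so that the relevant linear algebra at each order is solvable, exactly as in the construction of $\mathcal P_m(u)$ in Lemma \ref{Lemma2.40}. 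One wants $u_0$ chosen so that its Hessian is, up to the degenerate factor $K$, positive definite in $\Omega_+$ and has the correct Lorentzian signature in $\Omega_-$; concretely $D^2u_0$ should be a small perturbation of a fixed constant matrix whose determinant has the sign of $K$.

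The second step is to set up the iteration. Writing $u = u_0 + v$ and linearizing, the equation becomes
\begin{equation*}
\mathcal L_{u_0} v \equiv (\operatorname{cof} D^2 u_0)^{ij}\partial_{ij} v = K\psi(\cdot,u_0,Du_0) - \det(D^2u_0) + Q(v),
\end{equation*}
where $Q(v)$ collects the quadratic and higher error terms together with the dependence of $\psi$ on $v,Dv$. After a linear change of variables normalizing the principal part, the operator $\mathcal L_{u_0}$ has the form $u_{yy} + aK u_{xx} + b_1 u_x + b_2 u_y + cu$ with $a\ge\lambda$, and one checks that the hypotheses \eqref{r2}--\eqref{r8} of Theorem \ref{TheoremMain-Linear} hold: \eqref{r3}--\eqref{r4} come from the assumptions \eqref{eq-nr3}--\eqref{eq-nr4} on $K$; \eqref{r5}--\eqref{r6} are precisely \eqref{eq-nr5}--\eqref{eq-nr6}; the Levi-type condition \eqref{r7} on $b_1$ must be arranged by the choice of $u_0$ (the lower-order coefficients of the linearization are determined by derivatives of $u_0$ and $\psi$, and $b_1$ inherits a factor controlled by $\sqrt K + |K_x|$ because the cofactor matrix of $D^2u_0$ degenerates in the same way $K$ does); and \eqref{r8}, $c\le 0$ in $\Omega_+$, holds up to a term of size $\varepsilon$ after shrinking the domain, which suffices by Remark \ref{Remark-forTheorem5.1}. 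Theorem \ref{TheoremMain-Linear} then gives, at each step, a solution of the linearized equation with the tame estimate \eqref{main-linear}, losing $d+3$ derivatives plus one derivative of the coefficients.

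The third step is the convergence of the Nash-Moser scheme. Because the linear estimate \eqref{main-linear} is tame --- the loss of derivatives is fixed and the dependence on the coefficients (hence on the previous iterate through $u_0+v_k$) is linear in the top Sobolev norm --- one introduces smoothing operators $S_{\theta_k}$ with $\theta_k = \theta_0^{(3/2)^k}$ and runs the standard iteration $v_{k+1} = v_k + S_{\theta_k}\,\mathcal L_{u_0+v_k}^{-1}(\text{error}_k)$, controlling the quadratic error $Q(v_k)$ and the smoothing error by interpolation inequalities in $H^s(B_{r})$. The smallness needed to close the scheme is obtained by working in a sufficiently small ball $B_r$, where the initial error $K\psi(\cdot,u_0,Du_0)-\det D^2u_0$ is as small as desired in every fixed norm (since it vanishes to high order at $0$). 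This produces a convergent sequence whose limit $u = u_0 + v$ is a smooth solution of \eqref{eq-nr1} in $B_r$.

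The main obstacle I expect is verifying that the linearization at the approximate solution genuinely satisfies all the structural hypotheses \eqref{r2}--\eqref{r8}, in particular the Levi condition \eqref{r7}: one must show that the off-diagonal and first-order terms of $\mathcal L_{u_0+v_k}$, after the coordinate normalization, degenerate along the characteristic curves at the rate dictated by $\sqrt K + |K_x|$, uniformly along the iteration. This requires a careful analysis of how $\operatorname{cof}D^2(u_0+v_k)$ behaves near $\{K=0\}$ and is the place where the precise form of the degeneracy of $K$ — and the freedom in choosing $u_0$ — is essential; it is also why the linear result was proven with $x^2-y^2$ rather than a general quadratic.
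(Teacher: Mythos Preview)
Your overall strategy---approximate solution plus Nash--Moser with the linear theory of Theorem \ref{TheoremMain-Linear}---is correct, and you have correctly identified the crux: whether the linearization at each iterate satisfies the structural hypotheses \eqref{r2}--\eqref{r8}, in particular that the principal symbol degenerates \emph{exactly} on $\{K=0\}$. But your proposed resolution (a linear change of variables bringing $\mathcal L_{u_0+v_k}$ into the form $\partial_{yy}+aK\partial_{xx}+\cdots$) does not work, and this is a genuine gap. The principal part of the linearization has coefficients $(\operatorname{cof}D^2(u_0+v_k))^{ij}$, whose determinant equals $\det D^2(u_0+v_k)=K\psi+(\text{error})$. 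No linear change of coordinates can strip off the error term; the zero locus of the symbol is the zero locus of $K\psi+\text{error}$, not of $K$, and you have no control over how that error behaves along the iteration. Your remark that ``the cofactor matrix degenerates in the same way $K$ does'' is precisely what fails for a generic iterate $v_k$.

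The paper resolves this by two linked ideas you are missing. First, the approximate solution is not a high-order Taylor polynomial but the explicit ansatz $u=\tfrac12\tilde x_1^2+\varepsilon^5 w(\tilde x/\varepsilon^2)$ after a scaling $\tilde x=\varepsilon^2 x$; the smallness comes from $\varepsilon$, not from vanishing at the origin. Second---and this is the essential point, due to Han \cite{Han2007}---one introduces a \emph{nonlinear}, $w$-dependent coordinate change (Lemma \ref{Lemma-i2.2}) that decomposes the full linearization as
\[
\mathcal F'(w)=\mathcal L(w)+\big(\text{terms with a factor of }\mathcal F(w)\text{ or }D\mathcal F(w)\big),
\]
where $\mathcal L(w)$ has principal part $a_{22}\partial_{y_2y_2}+a_{11}K\psi\,\partial_{y_1y_1}$ with the \emph{original} $K$. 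The operator $\mathcal L(w)$ satisfies \eqref{r2}--\eqref{r8} (the Levi condition \eqref{r7} and the smallness of $c$ come for free from the structure, not from a delicate choice of $u_0$), and the remainder $\mathcal F'(w)-\mathcal L(w)$, being multiplied by $\mathcal F(w)$, is quadratically small when applied to $\rho_\ell$. The Nash--Moser step is then modified: one solves $\mathcal L(w_\ell)\rho_\ell=-\mathcal F(w_\ell)$ rather than the full linearized equation, and the discrepancy $(\mathcal F'(w_\ell)-\mathcal L(w_\ell))(S_\ell\rho_\ell)$ is absorbed into the quadratic error $Q$. This decomposition, not a fortuitous property of $u_0$, is what makes the scheme close.
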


The proof of Theorem \ref{TheoremMain-Nonlinear} is based on
Nash-Moser iterations.
%A general
%result for the existence of local smooth solutions is formulated in
%\cite{Han-Hong2006}. (Refer to Theorem 7.4.1 in Page 130 \cite{Han-Hong2006}.)
An important step in such an iteration process consists of appropriate estimates
for solutions of the linearized equations.
In the case of the degenerate Monge-Amp\`{e}re
equation (\ref{eq-nr1}), the
linearized equations are hard to classify.
%Specifically, solutions of the linearized
%equations of (\ref{eq-nr1}) do not satisfy the estimate (7.4.5) in \cite{Han-Hong2006}.
A crucial observation by Han \cite{Han2007} is that
the linearization of Monge-Amp\`{e}re
equations can be decomposed into two parts, one of which has
type determined solely by $K$ and another
which may be considered as quadratic error with respect to the iteration process.
%The decomposition of linearized equations
%was used by Han in \cite{Han2007} to discuss a class of Monge-Amp\`{e}re
%equations which change type across a hypersurface.
%We now outline the proof.

In the following, we denote points in $\mathbb R^2$ by
$(x_1, x_2)$ instead of $(x,y)$ and write $x=(x_1,x_2)\in\mathbb R^2$.
Set
\begin{equation}\label{i2.4}
\tilde {\mathcal{F}}(u)= \det (D^2u)-K \psi(x, u, Du).
\end{equation}
To proceed, we temporarily replace $x \in \mathbb{R}^2 $ by
$\tilde{x}\in \mathbb{R}^2 $, replace $\psi$ by $\tilde \psi$, and write
$\widetilde{\partial_i}$ instead of $\partial_{\tilde{x}_i}$.
Then (\ref{i2.4}) has the form
\begin{equation*}
\tilde {\mathcal{F}}(u)=\det(\widetilde{D}^2u)-K \tilde \psi(\tilde
x, u, \widetilde{D}u).
\end{equation*} All functions are evaluated at
$\widetilde{x}$.
For $\varepsilon>0$ set
$$\tilde{x}=\varepsilon^2 x, $$
and
\begin{equation*}u(\tilde{x})=\frac12\tilde x_1^2
+\varepsilon^{5}w\left(\frac{\tilde{x}}
{\varepsilon^2} \right).\end{equation*}
Now we evaluate $\tilde {\mathcal{F}}(u)$ in terms of $w$. Set
\begin{equation*}
\mathcal{F}(w;
\varepsilon)={\mathcal{F}}(w)=\frac{1}{\varepsilon}\tilde
{\mathcal{F}}(u),\end{equation*} or
\begin{equation}\label{i2.9}
\mathcal{F}(w)=\frac{1}{\varepsilon} \big\{
\det\big((1-\delta_{i2})\delta_{1j}+ \varepsilon
\partial_{ij} w\big)  - K \psi \big\},
\end{equation}
where
\begin{equation}\label{i2.10}
\psi(\varepsilon, x, w, Dw)=\tilde \psi\big(\varepsilon^2 x,
\frac12\varepsilon^4
x_1^2+\varepsilon^{5}w(x),
\varepsilon^2(1-\delta_{i2})x_i+\varepsilon^3\partial_iw(x)\big).
\end{equation}
Note that the arguments of $\tilde \psi$ are $\tilde x$, $u$ and
$\tilde Du$ in terms of $w$ in the $x$-coordinates. All known
functions are evaluated at $\tilde x=\varepsilon^2 x$. By taking
$\varepsilon$ small enough, we may assume $\mathcal{F}(w)$ is well
defined in
$B_1\subset \mathbb R^2$.
Letting $w=0$ in (\ref{i2.9}), we have
\begin{equation*}
\mathcal{F}(0)=-\frac{1}{\varepsilon} K \psi.
\end{equation*}
By $K=K(\varepsilon^2 x)$ and $K(0)=0$, there holds
\begin{equation*}\label{i2.11}
\mathcal{F}(0)=\varepsilon F_0(\varepsilon, x),
\end{equation*}
for some smooth function $F_0$ in $\varepsilon$ and $x$. We
also have
$$\psi(\varepsilon, x, w, Dw)\ge \lambda,$$
for any
$x\in B_1$, any $\varepsilon$ small and
any $w\in C^\infty(B_1)$.

Now we discuss the linearized operator $\mathcal{F}'(w)$ of
$\mathcal{F}$ at $w$. For convenience, we set
\begin{equation*}\label{i2.12}(\Phi_{ij})=
\big((1-\delta_{i2})\delta_{1j} +
\varepsilon
\partial_{ij} w\big).\end{equation*}
A straightforward
calculation yields
\begin{equation}\label{i2.13}
\mathcal{F}'(w)\rho=\Phi^{ij}\partial_{ij}\rho+a_i\partial_i\rho+a\rho,
\end{equation}
where $(\Phi^{ij})$ is the matrix of cofactors of $(\Phi_{ij})$, i.e.,
\begin{equation}\label{i2.13a}
\Phi^{11}=\varepsilon\partial_{22}w, \quad
\Phi^{12}=-\varepsilon\partial_{12}w, \quad
\Phi^{22}=1+\varepsilon\partial_{11}w,
\end{equation}
and
\begin{equation}\label{i2.14}
a_i=a_i(\varepsilon, x, w,
Dw)=-\varepsilon^2K\partial_{\tilde\partial_iu}\tilde \psi, \quad
a=a(\varepsilon, x, w, Dw)=-\varepsilon^4K\partial_u\tilde
\psi.\end{equation} As in (\ref{i2.10}),
$\partial_{\tilde\partial_iu}\tilde \psi$ and $\partial_u\tilde \psi$
are evaluated at $$\big(\varepsilon^2 x,
\frac12\varepsilon^4
x_1^2+\varepsilon^{5}w(x),
\varepsilon^2(1-\delta_{i2})x_i+\varepsilon^3\partial_iw(x)\big).$$
Obviously, $a_i$ and $a$ are smooth in $\varepsilon$, $x$, $w$ and $Dw$.

By (\ref{i2.9}), we have
\begin{equation}\label{i2.17}\det(\Phi^{ij})=
\varepsilon\mathcal{F}(w)+K\psi.
\end{equation} It is not clear how $K$ determines the type
of the linear operator $\mathcal{F}'(w)$ in (\ref{i2.13}). Next, we
shall introduce a new coordinate system and rewrite (\ref{i2.13}).

\begin{lemma}\label{Lemma-i2.2}
For any $\varepsilon\in(0, \varepsilon_0]$ and any smooth function
$w$ with $|w|_{C^2}\le 1$, there exists a transformation $T: B_1\to
T(B_1)$, smooth in $\varepsilon, x$, $D^2w$
and $D^3w$, of the form
\begin{equation}\label{i2.18}
x\mapsto y=(y_1(x), y_2(x))\end{equation} such
that in the new coordinates $y$ the operator $\mathcal{F}'(w)$ is
given by
\begin{align}\label{i2.99}\begin{split}
{\mathcal
F}'(w)\rho=&a_{22}\partial_{y_2y_2}\rho
+\big(K\psi+\varepsilon\mathcal{F}(w)\big)a_{11}\partial_{y_1y_1}\rho\\
&+\big(b_{10}K+
b_{11}\partial_{y_1}K+\varepsilon\tilde
b_{10}\mathcal{F}(w)+\tilde
b_{11}\partial_{y_1}(\mathcal{F}(w)\big)
\partial_{y_1}\rho+b_2\partial_{y_2}\rho+cK\rho,\end{split}\end{align} where
$a_{11}, a_{22}$, $b_{10}$, $b_{11}$, $\tilde b_{10}$, $\tilde b_{11}$,
$b_2$ and $c$ are smooth functions in $\varepsilon$, $y$,
$w$, $Dw$, $D^2w$, $D^3w$ and $D^4w$, with
\begin{equation*}\label{i2.21}
a_{ii}=1+O(\varepsilon)\quad\text{for }
i=1,2.\end{equation*} Moreover,
for $i=1, 2$, $y_i=y_i(x)$ in (\ref{i2.18}) satisfies
\begin{equation*}\label{i2.22}|y_i-x_i|\le
c\varepsilon,\end{equation*} and for any $s\ge 0$
\begin{equation*}\label{i2.23}\|y_i\|_{H^s}\le
c(1+\|w\|_{H^{s+2}}),\end{equation*} for some positive constant $c$.
\end{lemma}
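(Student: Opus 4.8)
The plan is to use two structural features of the linearized operator (\ref{i2.13}). First, the cofactor matrix $(\Phi^{ij})$ of (\ref{i2.13a}) is divergence free: a one–line computation with $\Phi^{11}=\varepsilon\partial_{22}w$, $\Phi^{12}=\Phi^{21}=-\varepsilon\partial_{12}w$, $\Phi^{22}=1+\varepsilon\partial_{11}w$ gives $\sum_i\partial_{x_i}\Phi^{ij}=0$, so that
$$\mathcal F'(w)\rho=\partial_{x_i}\!\big(\Phi^{ij}\partial_{x_j}\rho\big)+a_i\partial_{x_i}\rho+a\rho$$
is in divergence form. Second, $\Phi^{22}=1+\varepsilon\partial_{11}w$ stays bounded away from $0$ once $\varepsilon\le\varepsilon_0$ and $|w|_{C^2}\le1$, \emph{irrespective of the sign of} $\det(\Phi^{ij})=K\psi+\varepsilon\mathcal F(w)$ (identity (\ref{i2.17})). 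Since $\Phi^{22}>0$, the principal symbol completes as a square in $\xi_2$,
$$\Phi^{11}\xi_1^2+2\Phi^{12}\xi_1\xi_2+\Phi^{22}\xi_2^2=\Phi^{22}\Big(\xi_2+\tfrac{\Phi^{12}}{\Phi^{22}}\xi_1\Big)^2+\frac{\det(\Phi^{ij})}{\Phi^{22}}\,\xi_1^2 ,$$
which is the clue for the coordinate change and simultaneously shows that the type is governed by $\det(\Phi^{ij})=K\psi+\varepsilon\mathcal F(w)$.

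Accordingly I would set $y_2=x_2$ and take $y_1=y_1(x)$ to be the solution of the linear transport equation $\partial_{x_2}y_1+\frac{\Phi^{12}}{\Phi^{22}}\partial_{x_1}y_1=0$ with $y_1=x_1$ on $\{x_2=0\}$. Because $|\Phi^{12}/\Phi^{22}|=O(\varepsilon)$, the characteristics are $O(\varepsilon)$–perturbations of vertical lines, so the method of characteristics produces such a $y_1$ on $B_1$ (after a harmless slight enlargement of the domain of the coefficients); it is smooth in $\varepsilon$, $x$ and the derivatives $D^2w,D^3w$ of $w$, satisfies $|y_1-x_1|\le c\varepsilon$ and $\partial_{x_1}y_1=1+O(\varepsilon)$, and $T=(y_1,y_2)$ is a near-identity diffeomorphism with $\|y_i\|_{H^s}\le c(1+\|w\|_{H^{s+2}})$ by the standard energy estimates for transport equations. (A more symmetric choice of both $y_1$ and $y_2$ would work equally well.)

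Transforming $\mathcal F'(w)$ to the $y$–coordinates, the divergence form is preserved up to the positive Jacobian factor $|\det DT|=1+O(\varepsilon)$, which I divide out. By the choice of $y_1$ the new mixed second-order coefficient vanishes, and by the square identity together with (\ref{i2.17}) the two surviving principal coefficients are $a^{22}=a_{22}$ and $a^{11}=\big(K\psi+\varepsilon\mathcal F(w)\big)a_{11}$, with $a_{ii}=1+O(\varepsilon)$. Expanding the divergence, the first–order part is $(\partial_{y_1}a^{11})\partial_{y_1}\rho+(\partial_{y_2}a^{22})\partial_{y_2}\rho$ plus the transformed term $a_i\partial_{x_i}\rho$; since $a_i=-\varepsilon^2K\partial_{\tilde\partial_iu}\tilde\psi$ carries an explicit factor $K$ from (\ref{i2.14}) and
$$\partial_{y_1}a^{11}=\psi a_{11}\,\partial_{y_1}K+K\,\partial_{y_1}(\psi a_{11})+\varepsilon\,\partial_{y_1}\!\big(\mathcal F(w)a_{11}\big),$$
the coefficient of $\partial_{y_1}\rho$ collapses exactly to $b_{10}K+b_{11}\partial_{y_1}K+\varepsilon\tilde b_{10}\mathcal F(w)+\tilde b_{11}\partial_{y_1}\mathcal F(w)$. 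Likewise $a=-\varepsilon^4K\partial_u\tilde\psi$ yields the term $cK\rho$, and $\partial_{y_2}a^{22}$ together with the ($K$–proportional) contribution of $a_2$ yields the generic term $b_2\partial_{y_2}\rho$; this is precisely (\ref{i2.99}).

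What remains is bookkeeping: collecting the coefficients and checking that each is a smooth function of $\varepsilon$, $y$ and of $w$ up to fourth order (this is where $D^3w$ enters through $\partial_xy_1$ and $D^4w$ through $\partial_x^2y_1$ and $\partial_{y_1}a_{11}$), together with the size claims $a_{ii}=1+O(\varepsilon)$, $|y_i-x_i|\le c\varepsilon$, $\|y_i\|_{H^s}\le c(1+\|w\|_{H^{s+2}})$, all immediate from the transport-equation construction. The main obstacle, conceptually, is the first–order term: a brute-force change of variables produces $\Phi^{ij}\partial_{x_ix_j}y_1$ in the coefficient of $\partial_{y_1}\rho$, an expression which—though equal to $\partial_{y_1}a^{11}$ by the divergence-free identity—does not visibly contain $\partial_{y_1}K$; it is the divergence-form route that makes the Levy–type structure built into (\ref{i2.99}) transparent. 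A secondary technical point is that $y_1$ is defined through an equation whose coefficient involves $D^2w$, so its second derivatives (hence the lower-order coefficients) must be controlled in terms of $D^4w$ with the stated $H^s$ bounds.
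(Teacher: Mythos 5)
Your proposal is correct and follows essentially the same route as the paper: both set $y_2=x_2$, define $y_1$ by the same transport equation $\Phi^{12}\partial_1 y_1+\Phi^{22}\partial_2 y_1=0$ with $y_1(x_1,0)=x_1$, and exploit the divergence-free identity $\sum_k\partial_k\Phi^{kl}=0$ together with $\det(\Phi^{ij})=K\psi+\varepsilon\mathcal F(w)$ to extract the first-order coefficient; your divergence-form framing is just a slightly different bookkeeping of the same computation (note that the precise first-order term is $\frac{1}{J}\partial_{y_1}(Ja^{11})$ rather than $\partial_{y_1}a^{11}$, but the Jacobian correction also carries the factor $K\psi+\varepsilon\mathcal F(w)$ and hence fits the stated structure).
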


This is Lemma 2.2 in
\cite{Han2007} for $n=2$ (page 430). The proof for $n=2$ is easy. We outline
the proof
for completeness.
%We skip the proof and refer to the proof
%in \cite{Han2007} for details.

\begin{proof}  By
(\ref{i2.13a}), we have
\begin{equation}\label{i2.25}
\Phi^{ij}=\delta_{i2}\delta_{j2}
+O(\varepsilon)\quad\text{for any }1\le i,j\le 2.
\end{equation}
First, we set
\begin{equation}\label{i2.28}
y_2=x_2.\end{equation} Next, we
consider the following equation for $y_1$
\begin{equation}\label{i2.26}\begin{split}
\Phi^{12}\partial_1y_1+\Phi^{22}\partial_2y_2&=0,\\
y_1(x_1, 0)&=x_1.\end{split}\end{equation} The
coefficient of $\partial_2y_1$ is given by $\Phi^{22},$ which is not
zero for small $\varepsilon$. Hence for small $\varepsilon$,
(\ref{i2.26}) always has a unique solution $y_1$ in $B_1$,
smooth in $\varepsilon$, $x$ and $D^2w$.
Moreover,
\begin{equation}\label{i2.27}
y_1(x)=x_1+O(\varepsilon).\end{equation} Obviously, $y=y(x)$ forms
a new coordinate system. This defines the transformation $T$ in
(\ref{i2.18}).
%Then we can check (\ref{2.22}) and (\ref{2.23}) easily.

In the new coordinates $y$, the operator $\mathcal{F}'(w)$
has the following form
\begin{equation}\label{i2.29}
\mathcal{F}'(w)\rho=b_{ij}\partial_{y_iy_j}\rho+b_i\partial_{y_i}\rho
+a\rho,\end{equation} where
\begin{equation*}
b_{ij}=\sum_{k,l=1}^2\Phi^{kl}\partial_ky_i\partial_ly_j,
\end{equation*}
and
\begin{equation*}
b_i=\sum_{k,l=1}^2\Phi^{kl}\partial_{kl}y_i+
\sum_{k=1}^2a_k\partial_ky_i.\end{equation*}
We now claim that
\begin{equation}\label{i2.30}
b_{11}=\frac1{\Phi^{22}}\det(\Phi^{ij})(\partial_1y_1)^2,\quad
b_{12}=0,\quad
b_{22}=\Phi^{22},\end{equation}
and
\begin{equation}\label{i2.31}
b_1=\partial_1\big(\frac{\det(\Phi^{ij})}{\Phi^{22}}\partial_1y_1\big)
+\sum_{k=1}^2a_k\partial_ky_1,\quad
b_2=a_2.\end{equation}
To prove the claim, we note that the expressions for $b_{22}$ and $b_2$
follow from (\ref{i2.28}) and those for $b_{12}$ and $b_{11}$
follow from (\ref{i2.26}).
To calculate $b_1$, we have by (\ref{i2.25})
\begin{equation*}\label{2.46}\sum_{k=1}^2\partial_k\Phi^{kl}=0.\end{equation*}
Then the first term in $b_1$ in (\ref{i2.31}) can be written as
$$\sum_{k, l=1}^2\Phi^{kl}\partial_{kl}y_1=
\sum_{k, l=1}^{2}\partial_k(\Phi^{kl}\partial_{l}y_1).$$
Then the expression for $b_1$ follows again from (\ref{i2.26}).

By substituting (\ref{i2.30}) and (\ref{i2.31}) in (\ref{i2.29}), we have
\begin{align*}
{\mathcal
F}'(w)\rho&=\Phi^{22}\partial_{y_2y_2}\rho+
\frac1{\Phi^{22}}\det(\Phi^{ij})(\partial_1y_1)^2\partial_{y_1y_1}\rho\\ & +
\left(\partial_1\big(\frac{\det(\Phi^{ij})}{\Phi^{22}}\partial_1y_1\big)
+\sum_{k=1}^2a_k\partial_ky_1\right)
\partial_{y_1}\rho
+a_2\partial_{y_2}\rho
+a\rho.\end{align*}
Recalling (\ref{i2.14}), (\ref{i2.17}), (\ref{i2.25}) and (\ref{i2.27}), we
conclude the proof.
\end{proof}

Next, we write
$\mathcal{F}'(w)$ in (\ref{i2.99}) as
\begin{equation}\label{i2.19}
{\mathcal
F}'(w)\rho=\mathcal{L}(w)\rho+\varepsilon\mathcal{F}(w)\sum_{i,j=1}^{2}
\tilde a_{ij}
\partial_{ij}\rho+
\varepsilon\sum_{i,j=1}^{2}\big(\tilde b_{j0}\mathcal{F}(w)+\tilde
b_{ij}\partial_i(\mathcal{F}(w))\big)
\partial_{j}\rho,\end{equation}
where $\tilde a_{ij}$ and $\tilde b_{ij}$ are functions smooth
in $\varepsilon$, $x$, $D^2w$, $D^3w$ and
$D^4w$, and $\mathcal{L}(w)$ has the following form in $T(B_1)$
\begin{equation}\label{i2.20}
\mathcal{L}(w)\rho=a_{22}\partial_{y_2y_2}\rho
+a_{11}K\partial_{y_1y_1}\rho +(b_{1}K+\tilde b_{1}\partial_{y_1}K)
\partial_{y_1}\rho+b_2\partial_{y_2}\rho+cK\rho,\end{equation}
where $a_{11}, a_{22}$, $b_{1}$, $\tilde b_1$, $b_2$ and $c$ are functions
smooth in $\varepsilon$, $y$, $w$, $Dw$, $D^2w$,
$D^3w$ and $D^4w$. We point out that, in the new coordinate system
$(y_1, y_2)$ in (\ref{i2.18}), the operator
$\mathcal{L}(w)$ in (\ref{i2.20}) has a special
structure. Both $a_{11}$ and $a_{22}$ are positive and there is a factor
of $K$ in the coefficient of $\partial_{y_1y_1}\rho$. Hence, the
operator $\mathcal{L}(w)$ is elliptic if $K>0$ and hyperbolic if
$K<0$. We emphasize that the type of $\mathcal{L}(w)$ is
determined {\it solely} by $K$ and is independent of $w$, the function at which
the linearized operator is evaluated. This is crucial for the
iterations. Next, we note that the correction terms that were added in
(\ref{i2.19}) are {\it quadratic} in $\mathcal{F}(w)$ and $\rho$,
and their derivatives. Hence they can be relegated to the quadratic
error in the iteration process, that is, they may be ignored when solving the linearized equation.

By writing
\begin{equation*}
a_{22}^{-1}\mathcal{L}(w)\rho=\partial_{y_2y_2}\rho
+\frac{a_{11}}{a_{22}}K\partial_{y_1y_1}\rho +\frac{1}{a_{22}}(b_{1}K+\tilde b_{1}\partial_{y_1}K)
\partial_{y_1}\rho+\frac{b_2}{a_{22}}\partial_{y_2}\rho
+\frac{c}{a_{22}}K\rho,\end{equation*}
we note that the coefficients in the right hand side satisfy
(\ref{r2})-(\ref{r7}). (The notation here is different from that used in the previous
section.) The coefficient of $\rho$ may not be nonpositive. However,
it is small as $K=K(\varepsilon^2 x)$ and $K(0)=0$.
By Theorem \ref{TheoremMain-Linear} and Remark \ref{Remark-forTheorem5.1},
for any smooth function $f$ in $B_1$ and any $\varepsilon>0$ sufficiently small,
there exists a smooth function $\rho$ in $B_1$ such that
$$\mathcal L(w)(\rho\circ T^{-1})=f\circ T^{-1}\quad\text{in }T(B_1),$$
where $T$ is the transformation given by (\ref{i2.18}).
(In the following, we abuse notation and
simply write $\mathcal {L}(w)\rho=f$ in $B_1$.)
Moreover, if $\|w\|_{H^4(B_1)}\le 1$, then for any nonnegative integer $s$
\begin{equation}\label{eq-linear00}\|\rho\|_{H^s(B_1)}
\le c_s\left(||f||_{H^{s+d+3}(B_1)}
+(\|w\|_{H^{s+d+8}(B_1)}+1)||f||_{H^{d+3}(B_1)}\right),
\end{equation}
where $c_s$ is a constant depending only on $s$,
$\lambda$, $C_K$, the
$C^1$-norm of $\gamma_i$, $i=1,2$ and the $H^{s+d+8}$-norm of $K$.
Here, we use the fact that $a_{11}, a_{22}$, $b_{1}$, $b_2$ and $c$ are functions
smooth in $\varepsilon$, $y$, $w$, $Dw$, $D^2w$,
$D^3w$ and $D^4w$.

The proof of Theorem \ref{TheoremMain-Nonlinear} is based on
Nash-Moser iterations. A general
result for the existence of local smooth solutions is formulated in
\cite{Han-Hong2006}. (Refer to Theorem 7.4.1 on page 130 \cite{Han-Hong2006}.)
However, the linearized equations of (\ref{eq-nr1}) do not satisfy
the condition listed there. Specifically, solutions of the linearized
equations of (\ref{eq-nr1}) do not satisfy the estimate (7.4.5) in \cite{Han-Hong2006}.
As we have discussed,
the linearization of Monge-Amp\`{e}re
equations can be decomposed into two parts, one of which can be
used to form a linear equation whose solutions satisfy
the estimate (7.4.5) in \cite{Han-Hong2006} and another
which may be considered as quadratic error. Therefore the iteration process in the proof
of Theorem 7.4.1 can be performed.
%The decomposition of linearized equations
%was used by Han in \cite{Han2007} to discuss a class of Monge-Amp\`{e}re
%equations which change type across a hypersurface.
We now outline the proof.

\begin{proof}[Proof of Theorem \ref{TheoremMain-Nonlinear}]
Now we can use iterations to solve $\mathcal F(\cdot,\varepsilon)=0$ for
$\varepsilon$ sufficiently small as in the proof of
Theorem 7.4.1 on page 130 \cite{Han-Hong2006}. The estimate
(\ref{eq-linear00}) plays the same role as (7.4.5) there.
We begin the iteration by setting $w_0=0$.  Then
$w_\ell$ is constructed by induction on $\ell$ as follows. Suppose $w_0,
w_1, \cdots , w_\ell$ have been chosen.
Let $\rho_\ell$ be a
solution of
\begin{equation}\label{B3.7}
{\mathcal{L}}(w_{\ell}) \rho_{\ell}=-\mathcal{F}(w_\ell).
\end{equation}
%where
%\begin{equation*}\label{B3.8}
%f_{\ell}=-\mathcal{F}(w_\ell).
%\end{equation*}
Here, $\rho_\ell$ is chosen to satisfy
\begin{equation*}\label{B3.9}
\|\rho_\ell\|_{H^s(B_1)}
\le c_s\left(||\mathcal{F}(w_\ell)||_{H^{s+d+3}(B_1)} +(\|w_\ell\|_{H^{s+d+8}(B_1)}+1)||\mathcal{F}(w_\ell)||_{H^{d+3}(B_1)}\right),
\end{equation*}
for any $s\ge 0$.
Now we define
\begin{equation}\label{B3.10}
w_{\ell+1}=w_{\ell}+ S_\ell\rho_{\ell}, \end{equation}
where $\{S_\ell\}$ is an appropriately chosen family of smoothing operators.
We point out that (\ref{B3.7}) replaces (7.4.8) in \cite{Han-Hong2006}.
We may proceed as in the proof of Theorem 7.4.1 \cite{Han-Hong2006}
with minor modifications. By Taylor expansion and
(\ref{B3.10}), we have
\begin{align*}
&\ \mathcal{F}(w_{\ell+1})-\mathcal{F}(w_\ell)\\
=&\
\mathcal{F}'(w_\ell)(w_{\ell+1}-w_{\ell})
+Q(w_\ell; w_{\ell+1}-w_{\ell})\\
=&\ \mathcal{F}'(w_\ell)(S_\ell \rho_{\ell}) +Q(w_\ell;
S_\ell\rho_{\ell})\\
=&\ \mathcal{L}(w_\ell)(S_\ell\rho_{\ell})+\big(\mathcal{F}'(w_\ell)
-\mathcal L(w_\ell)\big)
(S_\ell\rho_{\ell})+Q(w_\ell;
S_\ell\rho_{\ell})\\
=&\ \mathcal L(w_\ell)\rho_\ell+
\mathcal{L}(w_\ell)(S_\ell-1)\rho_\ell
+\big(\mathcal{F}'(w_\ell)
-\mathcal L(w_\ell)\big)
(S_\ell\rho_{\ell})+Q(w_\ell;
S_\ell\rho_{\ell}),\end{align*} where $Q(w_\ell; S_\ell\rho_{\ell})$
is the quadratic error. Then (7.4.23) on page 133 of \cite{Han-Hong2006}
may be modified accordingly.
We point out that, by (\ref{i2.19}) and (\ref{i2.20}),
the difference of ${\mathcal
F}'(w_\ell)
(S_\ell\rho_{\ell})$ and $\mathcal L(w_\ell)
(S_\ell\rho_{\ell})$
consists of {\it quadratic} expressions
in $\mathcal{F}(w_\ell)$ and $S_\ell\rho_{\ell}$,
and their derivatives, which may be estimated in a way similar to
$Q(w_\ell; S_\ell\rho_{\ell})$. The rest of the proof is the same
as that of Theorem 7.4.1 \cite{Han-Hong2006}, and is therefore not included here.
\end{proof}

\end{document}